\newtheorem{theorem}{Theorem}
\newtheorem{lemma}[theorem]{Lemma}
\newtheorem{corollary}[theorem]{Corollary}
\newtheorem{proposition}[theorem]{Proposition}
\newtheorem*{theorem*}{Theorem}
\theoremstyle{definition}
\newtheorem{definition}[theorem]{Definition}
\newtheorem{remark}[theorem]{Remark}
\newtheorem{question}[theorem]{Question}
\numberwithin{equation}{section}
\DeclareMathOperator{\id}{id}
\DeclareMathOperator{\Ad}{Ad}
\newcommand{\HM}{\mathrm{HM}}
\newcommand{\OZ}{\mathrm{OZ}}
\newcommand{\dimnuc}{\mathrm{dim}_\mathrm{nuc}}
\newcommand{\N}{\mathbb{N}}
\newcommand{\M}{\mathbb{M}}
\newcommand{\C}{\mathbb{C}}
\newcommand{\K}{\mathbb{K}}
\renewcommand{\O}{\mathcal{O}}
\newcommand{\tr}{\mathrm{tr}}
\begin{document}
	\title[Nuclear dimension of extensions]{Nuclear dimension of extensions of commutative $\mathrm{C}^*$-algebras by Kirchberg algebras}

\author[]{Samuel Evington}
\address{Samuel Evington, Mathematical Institute, University of M\"unster, Einstein-strasse 62, 48149 M\"unster, Germany}
\email{evington@uni-muenster.de}

\author[]{Abraham C.S. Ng}
\address{Abraham C.S. Ng, School of Mathematics and Statistics, University of Sydney, New South Wales 2006, Australia}
\email{abraham.ng@sydney.edu.au}

\author[]{Aidan Sims}
\address{Aidan Sims, School of Mathematics and Applied Statistics, University of Wollongong, New South Wales 2522, Australia}
\email{asims@uow.edu.au}

\author[]{Stuart White}
\address{Stuart White, Mathematical Institute, University of Oxford,
  Oxford, OX2 6GG, United Kingdom}
\email{stuart.white@maths.ox.ac.uk}

\thanks{This work was supported by:
Deutsche Forschungsgemeinschaft (DFG, German Research Foundation) – Project-ID 427320536 – SFB 1442 [Evington]; 
Germany's Excellence Strategy EXC 2044 390685587  Mathematics M{\"u}nster: Dynamics–Geometry–Structure [Evington];  
ERC Advanced Grant 834267 - AMAREC [Evington];
Engineering and Physical Sciences Research Council (EP/X026647/1) [White]; ARC Discovery Project DP220101631 [Ng and Sims].  For the purpose of open access, the authors have applied a Creative Commons attribution (CC BY) licence to any Author Accepted Manuscript version arising from this submission.
}

%\tableofcontents
\begin{abstract}
  We compute the nuclear dimension of extensions of $\mathrm{C}^*$-algebras involving commutative unital quotients and stable Kirchberg ideals. We identify the finite directed graphs whose $\mathrm{C}^*$-algebras are covered by this theorem.
\end{abstract}

\maketitle

\section{Introduction} \label{sec:intro}
\renewcommand{\thetheorem}{\Alph{theorem}}

By the Gelfand--Naimark theorem, every abelian $\mathrm{C}^*$-algebra arises as the algebra $C_0(X)$ of continuous functions that vanish at infinity on some locally compact Hausdorff topological space $X$. Thus $\mathrm{C}^*$-algebras provide a framework for non-commutative topology. Nuclear dimension is a non-commutative generalisation of covering dimension for topological spaces to the setting of $\mathrm{C}^*$-algebras due to Winter and Zacharias (\cite{WZ10}).  By Ostrand's theorem (\cite{Os71}), the covering dimension of a normal topological space $X$ is the smallest $n$ such that every locally finite open cover of $X$ has a refinement that can be $(n+1)$-coloured with sets of the same colour pairwise disjoint. The nuclear dimension of a $\mathrm{C}^*$-algebra is defined by `colouring' the finite-dimensional approximations from the Choi--Effros--Kirchberg characterisation of nuclearity (\cite{ChoiEff76,Kir77}) with approximants of the same colour mutually orthogonal. In particular, $\mathrm{C}^*$-algebras of finite nuclear dimension are nuclear; see Definition~\ref{def:DimNuc} below. For unital or separable abelian $\mathrm{C}^*$-algebras, nuclear dimension matches covering dimension: the classical proof that abelian $\mathrm{C}^*$-algebras satisfy the completely positive approximation property shows that $\dimnuc(C_0(X))\leq \dim X$; the reverse inequality is obtained by taking a good finite nuclear dimension approximation for a partition of unity (see \cite{Cas21}, for example). 

Since its introduction, nuclear dimension has been intimately linked with the Elliott classification programme for simple separable nuclear $\mathrm{C}^*$-algebras. The additional control given by an $(n+1)$-coloured completely positive approximations, as opposed to standard completely positive approximations, has structural consequences (see, for example, \cite{Wi12,Ro11}). Moreover, the original proof of the stably finite part of the unital classification theorem  (\cite{GLN20a,GLN20b, EGLN,TWW17}) used finite nuclear dimension as the fundamental regularity hypothesis so as to use Winter's technique, powered by nuclear dimension, of classification by embeddings (\cite{Winter:AJM}). Conversely, classification has been essential in obtaining optimal values of the nuclear dimension both directly (by transferring explicit finite dimensional approximations from families of model $\mathrm{C}^*$-algebras to the general setting in \cite{WZ10,En15,RST15,RSS15}) and more indirectly (through the use of classification theorems for maps to construct finitely colourable approximations as in \cite{MS14,SWW15,BBSTWW,CETWW,CE,BGSW19,Gla20,GT22,Ev22}).

This paper is concerned with the computation of the nuclear dimension of an extension
\begin{equation}
 0 \rightarrow J \rightarrow E \rightarrow B\rightarrow 0
\end{equation} 
in terms of $J$ and $B$.  Finite nuclear dimension is preserved by extensions:
\begin{equation}\label{Intro.Bounds}
\begin{split}
\max(\dimnuc (J),\dimnuc(B)) &\le \dimnuc(E) \\
        &\leq \dimnuc(J) +\dimnuc(B)+1.
\end{split}
\end{equation}
Indeed, this result was one of the primary motivations for the introduction of nuclear dimension in \cite{WZ10} (particularly in contrast with the earlier concept of decomposition rank from \cite{KW04}).\footnote{As $\mathrm{C}^*$-algebras of finite decomposition rank are quasidiagonal and hence stably finite, the Toeplitz algebra below has infinite decomposition rank.} The upper bound above is obtained by taking an approximate unit $(h_n)$ for $J$ that is quasicentral for $E$, and then using the approximation 
\begin{equation}\label{Intro.Bounds2}
a\approx h_n^{1/2}ah_n^{1/2}+(1-h_n)^{1/2}a(1-h_n)^{1/2},\quad a\in E.
\end{equation}
The point is that the first term $h_n^{1/2}xh^{1/2}_n$ lies in $J$, so can be approximated by a nuclear-dimension approximation for $J$, and for large $n$ one can obtain an approximation to the second term from a nuclear-dimension approximation for the quotient $B$ (this is the proof of \cite[Proposition 2.9]{WZ10}).\footnote{The number of colours used in an approximation is the nuclear dimension plus one, so this sketch explains the upper bound on $\dimnuc(E)$ in \eqref{Intro.Bounds}: the number of colours needed to approximate $E$ is at most the sum of the number of colours used to approximate the ideal and the quotient.}

In their original paper \cite{WZ10}, Winter and Zacharias highlighted the case of the Toeplitz algebra $\mathcal T$. This arises as an extension 
\begin{equation}
0\rightarrow\mathbb K\rightarrow \mathcal T\rightarrow C(\mathbb T)\rightarrow 0,
\end{equation}
and so\footnote{The compacts $\mathbb K$ have nuclear dimension $0$. Indeed, a $\mathrm{C}^*$-algebra has nuclear dimension $0$ if and only if it is approximately finite dimensional (see \cite[Remark 2.2(iii)]{WZ10}).}  has nuclear dimension $1$ or $2$. Despite the attention of experts, it took 10 years until Brake and Winter obtained the exact value of $1$ in \cite{BW19}. In a nutshell, Brake and Winter found a clever way to approximate suitable terms $(1-h_n)^{1/2}a(1-h_n)^{1/2}$ in \eqref{Intro.Bounds2} with the expected two colours (from the dimension of the quotient $C(\mathbb T)$) so that \emph{one colour lies orthogonal to $h_n$}. This orthogonality means that approximations to $h^{1/2}_n a h^{1/2}_n$ can be assigned this same colour; since only one colour is needed to approximate $h^{1/2}_n a h^{1/2}_n$ due to $0$-dimensionality of the ideal, the upshot is that only two colours are needed in total. Essentially, Brake and Winter succeed in reusing one of the two colours needed to approximate the quotient to approximate the ideal (we will discuss this strategy in a bit more detail in Section~\ref{sec:outline}).

The Brake--Winter result sparked the following question.

\begin{question}\label{Question.Main}
Let 
\begin{equation}
 0 \rightarrow J \rightarrow E \rightarrow B\rightarrow 0
\end{equation}
be an extension of $\mathrm{C}^*$-algebras. Is
\begin{equation}
\dimnuc(E)=\max(\dimnuc(J),\dimnuc(B))?
\end{equation}
\end{question}

In a small number of situations, a positive answer follows from general structural properties of the extension:
\begin{itemize}
\item when both $J$ and $B$ are separable and commutative, as then so too is $E$,\footnote{This can be seen, for example, by identifying $E$ with the pullback of the commutative algebras $M(J)$ and $B$ with diagram \eqref{eqn:busby}.} and we may use the classical result \cite[Corollary 3.5.8]{Pears75} to compute the covering dimension of the spectrum of $E$;
\item when $J$ and $B$ are both AF, in which case the extension $E$ is AF (\cite{Brown81,E76}), and so has nuclear dimension zero;
\item when both $J$ and $B$ are nuclear and $\mathcal O_\infty$-stable, when so too is the extension (\cite[Section 3]{TW07} or \cite[Section 4]{Kir06}) which then has nuclear dimension $1$ (\cite{BGSW19}); 

\item when the extension is quasidiagonal in the sense that there exists an approximate unit $(p_n)$ for $J$ which is quasicentral for $E$ and consists of projections (see \cite[Proposition 6.3]{RST15});\footnote{The point is that, as $p_n$ is a projection, it is orthogonal to $1-p_n$, so approximations coming from the quotient can be combined with those coming from the ideal without increasing the number of colours needed.}
\item when the extension arises from a minimal unitisation of a $\mathrm{C}^*$-algebra (see \cite[Remark 2.11]{WZ10} and \cite[Proposition 3.11]{KW04}).
\end{itemize}
The first three cases are explained by ``homogeneity'' of the extension, by which we mean that $E$ belongs to a class of $\mathrm{C}^*$-algebras for which nuclear dimension is known, and that is closed under ideals, quotients and extensions.

Following Brake and Winter’s work on the Toeplitz algebra, further positive answers to Question \ref{Question.Main} were obtained.  These results are summarised in the following table (in which $B$ is assumed nuclear).
\begin{center}
\begin{tabu}{|[1.5pt]r|c|c|c|c|[1.5pt]}
\tabucline[1.5pt]{-}
Paper&$J$&$B$&Extension&$\dimnuc(E)$\\
\tabucline[1.5pt]{-}
\cite{BW19}&$\mathbb K$&$C(\mathbb T)$&Toeplitz&1\\
\cite{Gla20}&Stable AF&Kirchberg&Unital&1\\
\cite{Ev22}&Stable AF&$\mathcal O_\infty$-stable&Full&$1$\\
\cite{GT22}&$\mathbb K$&$C(X)$&Essential&$\max(\dim X,1)$\\
\tabucline[1.5pt]{-}
\end{tabu}
\end{center}

In all these results, the nuclear dimension of the ideal is strictly less than that of the quotient, and this is crucial in deploying the Brake--Winter strategy to obtain an optimal nuclear dimension estimate (albeit using different powerful classification results in different cases to achieve the necessary orthogonality). Our main theorem provides the first answer to Question~\ref{Question.Main} other than the situations listed above.

\begin{theorem}\label{thm:main}
Let $J$ be a stable Kirchberg algebra and $X$ a compact metric space. Let 
\begin{equation}\label{eqn:extension1}
 0 \rightarrow J \rightarrow E \xrightarrow{\pi} C(X) \rightarrow 0
\end{equation}
be an essential extension. Then $\dimnuc(E) = \max(1,\dim(X))$.
\end{theorem}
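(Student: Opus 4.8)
The inequality $\dimnuc(E)\ge\max(1,\dim X)$ is the easy half. Since $C(X)$ is a quotient of $E$ and $J$ an ideal, the lower bound in \eqref{Intro.Bounds} gives $\dimnuc(E)\ge\max(\dimnuc(J),\dimnuc(C(X)))$. Here $\dimnuc(C(X))=\dim X$, while every Kirchberg algebra is nuclear and $\mathcal{O}_\infty$-stable and so has nuclear dimension $1$ by \cite{BGSW19} (and at least $1$, as it is not AF). Thus $\dimnuc(E)\ge\max(1,\dim X)$, and the whole content of the theorem is the reverse inequality. To prove $\dimnuc(E)\le n:=\max(1,\dim X)$, I would construct $(n+1)$-coloured completely positive approximations. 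Fix a finite set $F\subseteq E$ of contractions and $\varepsilon>0$, take a quasicentral approximate unit $(h_\lambda)$ for $J$, and for suitable $h=h_\lambda$ use the Brake--Winter splitting \eqref{Intro.Bounds2}, so that each $a\in F$ is approximated to within $\varepsilon$ by $h^{1/2}ah^{1/2}+(1-h)^{1/2}a(1-h)^{1/2}$. The first summand lies in $J$; the second satisfies $\pi\big((1-h)^{1/2}a(1-h)^{1/2}\big)=\pi(a)$ because $h\in\ker\pi$. The plan is to approximate the two summands by separate coloured systems and then amalgamate them without exceeding $n+1$ colours.

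For the quotient summand I would invoke Ostrand's theorem \cite{Os71}: choosing a cover of $X$ by sets on which each $\pi(a)$ varies by less than $\varepsilon$, refine it to an $(\dim X+1)$-colourable cover and take a subordinate partition of unity $(f_{i,j})_{i,j}$, where $i\in\{0,\dots,\dim X\}$ indexes colours and $f_{i,j}f_{i,j'}=0$ for $j\ne j'$. Evaluating at sample points gives, colour by colour, an order-zero approximation of $\pi(a)$; lifting the $f_{i,j}$ to an orthogonal family of positive contractions in $E$ produces candidate colour maps into $E$ that recover $(1-h)^{1/2}a(1-h)^{1/2}$ modulo $J$. For the ideal summand, $h^{1/2}ah^{1/2}\in J$ is approximated by a two-coloured system using $\dimnuc(J)=1$.

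The heart of the matter is the amalgamation, and this is where I expect the genuine difficulty. A naive sum of the two systems uses $(\dim X+1)+2$ colours; to reach $n+1$ one must reuse $\min(1,\dim X)+1$ of them, that is, fit the \emph{entire} smaller palette inside the larger one. Concretely, for each colour of the smaller system one needs a matching colour of the larger system in which the ideal order-zero map and the lifted quotient order-zero map have \emph{orthogonal} ranges, so that their sum (on the direct sum of the two finite-dimensional domains) is again order zero. Quasicentrality makes $h$ and $1-h$ only approximately orthogonal. In the special case where $J$ admits an approximate unit of projections quasicentral for $E$ one could use exact orthogonality and land in the already-understood quasidiagonal case of Question~\ref{Question.Main}; but no such approximate unit exists in general (for instance when $E$ contains a non-unitary isometry, as for the Toeplitz algebra), so a robust substitute for exact orthogonality is needed. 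I expect this to be the main obstacle, and the point at which the hypothesis that $J$ is a \emph{stable Kirchberg} algebra is essential: the idea would be to exploit pure infiniteness together with stability $J\cong J\otimes\mathbb{K}$ to carve out, within the transition region of $h$, orthogonal room into which the ideal approximation can be pushed while the lifted quotient maps are kept clear of it, extending the colour-reuse mechanism of Brake--Winter \cite{BW19} from a single reused colour to a whole palette.

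Finally, I would organise the argument by the value of $\dim X$. The most striking regime is $\dim X=1$, where $\dimnuc(J)=\dimnuc(C(X))=1$ and two two-coloured systems must be merged \emph{simultaneously} into two colours; this is the first instance in which the ideal and quotient have equal nuclear dimension, and it is where the new technique is tested most severely. The case $\dim X=0$ (reuse a single colour, as the quotient contributes only one) and the cases $\dim X\ge 2$ (hide the ideal's two colours among the quotient's $\ge 3$, where there is more room) are structurally analogous but should require less delicate bookkeeping, and I would present them as variations of the $\dim X=1$ construction.
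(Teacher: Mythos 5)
Your lower bound is correct and matches the paper, and you have correctly located the crux: reusing the quotient's colours for the ideal when the two approximations are only approximately orthogonal. But the proposal stops at naming the obstacle rather than overcoming it, and the mechanisms you gesture at (``carve out orthogonal room using pure infiniteness and stability'') are not arguments. Three concrete ingredients are missing. First, the paper does not use the two-term splitting $h^{1/2}ah^{1/2}+(1-h)^{1/2}a(1-h)^{1/2}$: it takes an \emph{almost idempotent} quasicentral approximate unit $(h_n)$ and splits $a$ into three pieces supported on $h_n$, on $h_{n+1}-h_n$, and on $1-h_{n+1}$. Almost idempotence makes the first and third hereditary subalgebras \emph{exactly} orthogonal, so the entire quotient palette on the outer piece can be reused for the ideal for free; the whole difficulty is concentrated in the transition piece $\beta_n(f)=(h_{n+1}-h_n)^{1/2}\mu(f)(h_{n+1}-h_n)^{1/2}$, which your decomposition does not isolate. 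Second, the transition piece is absorbed into colour $0$ of the quotient approximation by extending the order zero map $\beta\colon C(X)\to\prod_{n\to\omega}B_n$ to a separable algebra of \emph{Borel} functions containing the characteristic functions of a Borel partition refining the colour-$0$ open sets. This extension is where the Kirchberg hypothesis genuinely enters: it is obtained from Gabe's uniqueness theorem for $\O_\infty$-stable $^*$-homomorphisms applied to $\HM(\beta)$, and it requires first arranging that $\HM(\beta)$ be injective, which the paper achieves by adding on an explicitly constructed trivial extension and invoking absorption (Elliott--Kucerovsky/Gabe). None of this classification input appears in your proposal, and without it there is no way to ``lift the $f_{i,j}$'' compatibly with the ideal approximation.

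Third, for $\dim X\le 1$ one must approximate the ideal piece $\alpha_n$ in two colours so that one colour lands in $\overline{h_{n-1}Jh_{n-1}}$ (hence orthogonal to everything supported under $1-h_n$, including the modified transition colour) and the other in $\overline{h_nJh_n}$. The paper does this by splitting $\alpha$ with a positive contraction of full spectrum taken from a unital copy of $\O_\infty$ in $M(J)_\omega$ commuting with the relevant separable subalgebra, and then approximating each summand through matrix algebras using quasidiagonality of cones (Voiculescu) together with Gabe's uniqueness theorem again. Your remark that the $\dim X\ge 2$ cases need ``less delicate bookkeeping'' is right for the reason you give (the ideal's two colours hide among the quotient's three or more), but the $\dim X\le 1$ case is not a ``variation'' of the same construction: it needs this additional, separately engineered two-coloured approximation of $\alpha$ with prescribed supports, which is the genuinely new content of the theorem. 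As it stands the proposal is a correct reading of the problem and of the Brake--Winter framework, but the proof itself is absent at every point where something beyond that framework is required.
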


When $\dim X>1$, the quotient has strictly bigger dimension than the ideal, and our result fits in to the Brake--Winter framework.  
The main challenge comes when $\dim X=1$, so the ideal and quotient each require $2$ colours. In this situation the upper bound in~\eqref{Intro.Bounds} gives $\dimnuc(E)\leq 3$. To prove Theorem~\ref{thm:main}, we therefore need to reuse \emph{both} colours from the quotient in approximating the ideal. We do so by arranging for one colour from the quotient to be orthogonal to both colours from the ideal (very similarly to the $\dim X>1$ case) and simultaneously for one of the two colours of the ideal to be orthogonal to both the colours from the quotient. 

Our arguments are powered by uniqueness theorems for $\O_\infty$-stable $^*$-homomorphisms, developed recently by Gabe (\cite{GabeOinf}), building on the work of Kirchberg (\cite{Ki95}). These results allow us to transfer ideas of Gardner and Tikuisis for extensions of $C(X)$ by $\K$ (\cite{GT22}), which were based on Lin's uniqueness theorem for approximately multiplicative maps from commutative $\mathrm{C}^*$-algebras into matrices (\cite{Lin17}), to the purely infinite setting. In the case when $\dim X=1$, a further subtle use of $\O_\infty$-stable classification is required to engineer the additional orthogonality required in the approximation of the ideal.  

One of our motivations for this work -- and in particular for focusing on the case $\dim X=1$ -- is the computation of the nuclear dimension of graph $\mathrm{C}^*$-algebras. The first results in this direction (\cite{RST15}) used the technique developed by Winter and Zacharias (\cite{WZ10}) for Cuntz algebras and Gabe's argument that stable extensions of graph $\mathrm{C}^*$-algebras in which the quotient is AF are quasidiagonal extensions to establish that graph $\mathrm{C}^*$-algebras that are an extension of an AF quotient by a purely infinite ideal have nuclear dimension~1. Subsequent work (\cite{En15, RSS15}) used models built from graph $\mathrm{C}^*$-algebras to show that all simple Kirchberg algebras in the UCT class have nuclear dimension~1, motivating subsequent general calculations of the optimal value (\cite{BBSTWW, CE, CETWW}). Moreover, with the striking classification results for graph $\mathrm{C}^*$-algebras of (\cite{ET09, ERRS21}) and the connection between classifiability and low nuclear dimension for simple $\mathrm{C}^*$-algebras (\cite{CETWW,CE}), it is natural to wonder whether graph algebras likewise generically have small nuclear dimension. Of particular relevance to Question \ref{Question.Main} is that every finite graph $\mathrm{C}^*$-algebra has a composition series whose ideals and subquotients have nuclear dimension at most $1$. Indeed, $\mathrm{C}^*$-algebras associated to finite directed graphs can be built up from iterated extensions of
\begin{itemize}
\item AF-algebras,
\item $\mathrm{C}^*$-algebras stably isomorphic to $C(\mathbb T)$, and
\item Kirchberg algebras.
\end{itemize}
Therefore, if Question \ref{Question.Main} has a positive answer, then every finite graph $\mathrm{C}^*$-algebra has nuclear dimension at most $1$ (and hence by taking direct limits, so too does every graph $\mathrm{C}^*$-algebra).  

\begin{question}\label{qn:dimnuc(graphalg)}
Do all graph $\mathrm{C}^*$-algebras have nuclear dimension at most~$1$?
\end{question}

Our theorem is designed for the stably commutative quotient, Kirchberg ideal setting (and Theorem~\ref{thm:main} readily extends to allow for stably commutative quotients; see Corollary~\ref{cor:stable quotient}).  We end the paper by giving a geometric characterisation (Proposition~\ref{prop:which graph algebras}) of the graphs for which our methods apply. Our argument relies heavily on the explicit correspondence between features of a graph and the ideal lattice of its $\mathrm{C}^*$-algebra (\cite{aHR97, HS02, CS2017}). In recent work, Farout and Schafhauser (\cite{FS23}) show that the $\mathrm{C}^*$-algebra of any graph with condition~(K) is an extension of an $\mathcal O_\infty$-stable quotient by a stable AF-ideal, and so obtain an upper bound of $2$ in general, and identify geometrically when \cite{Ev22} can be used to reduce the dimension further to~$1$. Condition~(K) precludes the presence of abelian subquotients with nontrivial spectrum, so our result does not intersect theirs. Indeed, their situation, where the quotient is $\mathcal{O}_\infty$-stable and the ideal stably finite, is complementary to ours, where the roles are reversed.

This paper is structured as follows. 
In Section~\ref{sec:prelims}, we set up our notational conventions and collect some prerequisite results on Kirchberg algebras and extensions.
We open Section~\ref{sec:outline} with a high level outline of the proof of Theorem~\ref{thm:main}, and then in Subsections \ref{subsec:extending-orderzero}--\ref{subsec:dim1} we develop the technical machinery needed to implement it. 
Section~\ref{sec:mainthm} contains the proof of Theorem~\ref{thm:main}.
In Section~\ref{sec:graphs} we relate our result to graph $\mathrm{C}^*$-algebras.

\renewcommand{\thetheorem}{\arabic{theorem}}
\numberwithin{theorem}{section} 
\section{Preliminaries} \label{sec:prelims}

%\subsection{Notation}\label{subsec:notation}
We write $\M_n$ for the $\mathrm{C}^*$-algebra of $n \times n$ matrices and $\K$ for $\mathrm{C}^*$-algebra of compact operators on $\ell^2(\N)$. The Cuntz algebra with countably infinitely many generators from Cuntz's seminal paper \cite{Cu77} is denoted $\mathcal O_\infty$.

Let $A$ be a $\mathrm{C}^*$-algebra. We write $A_+$ for the positive elements in $A$ and  $A^\sim$ for the (forced) unitisation of $A$ (so $A^\sim \cong A \oplus \C$ when $A$ is already unital). We often implicitly work in the unitisation in our algebraic manipulations. The multiplier algebra of $A$ is denoted by $M(A)$, with corona $Q(A) = M(A)/A$ and quotient map $q_A:M(A) \rightarrow Q(A)$.

In this paper, approximate units are always assumed to be increasing nets of positive contractions, and are almost always sequential (as the $\mathrm{C}^*$-algebras of interest are separable). Following the language of \cite{GT22}, a sequential approximate unit $(h_n)_{n=1}^\infty$ is said to be \emph{almost idempotent} if $h_{n+1}h_n = h_n$ for all $n \in \N$. Every $\sigma$-unital $\mathrm{C}^*$-algebra has an almost idempotent approximate unit by \cite[Corollary~II.4.2.5]{Bl06}.

Quasicentral approximate units will play a particularly important role in this paper. Recall that given an ideal $I\triangleleft A$ in a $\mathrm{C}^*$-algebra, a \emph{quasicentral approximate unit} $(h_\lambda)_{\lambda \in \Lambda}$ for $I$ relative to $A$ is an approximate unit for $I$ with $\|h_\lambda a-a h_\lambda\|\to 0$ for all $a\in A$. 
The existence of quasicentral approximate units is guaranteed by Arveson's result \cite[Theorem~1]{Arveson}, whose proof shows that the convex hull of any approximate unit contains a quasicentral approximate unit. Consequently, when $A$ is separable, every ideal $I\triangleleft A$ has a sequential quasicentral almost idempotent approximate unit; see \cite[Proposition~II.4.3.2]{Bl06}. 

We fix a free ultrafilter $\omega$ on $\N$ for the entirety of the paper. The \emph{ultraproduct} of a sequence of $\mathrm{C}^*$-algebras $A_n$ with respect to $\omega$ is the $\mathrm{C}^*$-algebra 
\begin{equation}
 \prod_{n\to\omega} A_n = \frac{\prod_{n\in\N} A_n}{\{(a_n)_{n=1}^\infty\colon  \lim_{n\to\omega} \|a_n\|=0\}}. 
\end{equation}
We write $[a_n]_{n=1}^\infty$ for the element of the ultraproduct represented by the bounded sequence $(a_n)_{n=1}^\infty \in \prod_{n \in \N} A_n$. We write $A_\omega$ for the \emph{ultrapower} of a $\mathrm{C}^*$-algebra $A$, i.e $A_\omega = \prod_{n\to\omega} A$.

We will make frequent use of the embedding $\iota_{11}\colon A \rightarrow \M_2(A)$ given by
\begin{equation}
a \mapsto \begin{pmatrix}
a & 0 \\ 0 & 0
\end{pmatrix}
\end{equation}
for a variety of $\mathrm{C}^*$-algebras $A$. 

Let $A$ and $B$ be $\mathrm{C}^*$-algebras. Recall that a completely positive (c.p.) map $\phi\colon A \rightarrow B$ is said to be \emph{order zero} if it preserves orthogonality, i.e.\ $\phi(a)\phi(b) = 0$ whenever $a,b \in A_+$ satisfy $ab = 0$. 

There is a bijection between completely positive and contractive (c.p.c.) order zero maps $\phi\colon A\to B$ and $^*$-homomorphisms $\Phi\colon C_0(0,1]\otimes A\to B$  established in \cite[Corollary~3.1]{WZ09}. It will be convenient to have a standard notation for this, so we will write $\Phi = \HM(\phi)$ and $\phi = \OZ(\Phi)$.  So $\HM(\phi)$ is the unique $^*$-homomorphism $C_0(0,1]\otimes A\to B$ satisfying
\begin{equation}\label{DefHM}
    \phi(a) = \HM(\phi)(\mathrm{id}_{(0,1]} \otimes a), \quad a \in A,
\end{equation}
and $\OZ(\Phi(a)) = \Phi(\id_{(0,1]} \otimes a)$.

For completeness, we recall the definition of nuclear dimension of a completely positive map which is first formalised in \cite{TW} (for a $^*$-homomorphism) based on the seminal paper \cite{WZ10} introducing nuclear dimension for $\mathrm{C}^*$-algebras. We refer the reader to \cite{WZ10,WZ09} for more details on nuclear dimension and order zero maps.

\begin{definition}[{c.f.\ \cite[Definition~2.2]{TW}}]\label{def:DimNuc}
A completely positive map $\eta\colon A \to B$ between $\mathrm{C}^*$-algebras is said to have \emph{nuclear dimension at most $n$} if, for any finite subset $\mathcal{F} \subseteq A$ and any $\epsilon > 0$, there exist a finite-dimensional $\mathrm{C}^*$-algebra $G$, a c.p.c.\ map $\psi\colon  A \rightarrow G$ and a c.p.\ map $\phi\colon G \rightarrow B$ such that 
\begin{equation}
\max_{a \in \mathcal{F}}\|\eta(a) - \phi(\psi(a))\| < \epsilon
\end{equation}
and $G$ can decomposed as $G = G^{(0)} \oplus \cdots \oplus G^{(n)}$ where each restriction $\phi|_{G^{(i)}}$ is c.p.c.\ and order zero. The \emph{nuclear dimension} of $\eta$ is defined to be the least such $n$ (or $\infty$ if no such $n$ exists). Slightly rewriting history, the  \emph{nuclear dimension} of a $\mathrm{C}^*$-algebra $A$ is the nuclear dimension of the identity homomorphism $\id_A$.
\end{definition}

Recall that a $\mathrm{C}^*$-algebra is called \emph{Kirchberg} when it  is simple, separable, nuclear, and purely infinite. We will make use of the fact that Kirchberg algebras are $\O_\infty$-stable (one of Kirchberg's Geneva theorems from \cite{Ki95}; see \cite[Theorem~3.14]{0infinitystability}) and have real rank zero (see \cite[Theorem~1]{Zh90}). The existence of the following embeddings is well-known to experts but we supply a proof for the sake of completeness.
\begin{proposition}\label{prop:embeddings}
    Let $A$ be a Kirchberg algebra.
    \begin{enumerate}
        \item[(i)] There exists an embedding $\M_n \rightarrow A$ for all $n \in \N$.
        \item[(ii)] For each compact metric space $K$, there exists an embedding $C(K) \rightarrow A$; if $A$ is unital the embedding can be made unital.
\end{enumerate}
\end{proposition}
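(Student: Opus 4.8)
The plan is to treat the two parts separately, using $\O_\infty$-stability for (i) and a reduction to the Cantor set for (ii), with real rank zero supplying the projections needed in both.

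For (i), I would first recall that $\O_\infty$ contains isometries $s_1,\dots,s_n$ with pairwise orthogonal ranges, so that the elements $e_{ij} := s_is_j^*$ satisfy $e_{ij}e_{kl} = \delta_{jk}e_{il}$ and hence span a (non-unital) copy of $\M_n$ inside $\O_\infty$. Since $A$ is $\O_\infty$-stable, identify $A \cong A \otimes \O_\infty$, and choose a nonzero projection $p \in A$, which exists because $A$ has real rank zero. Then $x \mapsto p \otimes x$ is an injective $^*$-homomorphism $\O_\infty \to A \otimes \O_\infty \cong A$, and restricting it to the copy of $\M_n$ produces the desired embedding.

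For (ii), the key reduction is the Hausdorff--Alexandroff theorem: every nonempty compact metric space $K$ is a continuous image of the Cantor set $\mathcal{C}$, so a continuous surjection $f\colon \mathcal{C} \twoheadrightarrow K$ induces an \emph{injective} unital $^*$-homomorphism $f^*\colon C(K) \to C(\mathcal{C})$. It therefore suffices to embed $C(\mathcal{C})$ (unitally, when $A$ is unital). Writing $C(\mathcal{C}) \cong \varinjlim_n \C^{2^n}$, where each connecting map splits every minimal projection into two, I would build a matching nested family of nonzero orthogonal projections: set $B := A$ if $A$ is unital, and otherwise pass to the unital corner $B := pAp$ for a nonzero projection $p$. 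Since $B$ is again purely infinite and simple, recursively split $1_B = p_\emptyset$ as $p_s = p_{s0}+p_{s1}$ into nonzero orthogonal projections, which is possible because every nonzero projection in a purely infinite simple $\mathrm{C}^*$-algebra is infinite and so its corner is not one-dimensional. Sending the minimal projection of $\C^{2^n}$ indexed by $s \in \{0,1\}^n$ to $p_s$ defines coherent unital embeddings $\C^{2^n} \to B$, which are automatically isometric and hence assemble to an injective $^*$-homomorphism $C(\mathcal{C}) \to B$. Composing with $f^*$ yields an embedding $C(K) \to B \subseteq A$, which is unital precisely when $A$ is.

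The only genuinely external ingredient is the Hausdorff--Alexandroff surjection, which is exactly what lets me replace the topologically complicated $K$ by the Cantor set and so reduce the whole problem to splitting projections. Organising this recursive splitting into a \emph{coherent} system of maps and verifying injectivity of the resulting inductive limit is the main point requiring care, although each individual step is routine given pure infiniteness and real rank zero. The remaining ingredients --- the matrix-unit computation for (i) and the passage to a corner for the non-unital case of (ii) --- are standard.
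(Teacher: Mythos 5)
Your part~(i) is essentially identical to the paper's: matrix units $s_is_j^*$ give a non-unital copy of $\M_n$ in $\O_\infty$, and $x\mapsto p\otimes x$ transports it into $A\otimes\O_\infty\cong A$ using a projection supplied by real rank zero.

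For part~(ii) you share the paper's opening move --- the Hausdorff--Alexandroff theorem reduces the problem to embedding $C$ of the Cantor set --- but then diverge. The paper stays inside $\O_\infty$: it identifies $C(\{0,1\}^{\N})$ with $\bigotimes_{i\in\N} D_2$ for the two-dimensional unital subalgebra $D_2=\operatorname{span}\{s_1s_1^*,\,1-s_1s_1^*\}$, and uses injectivity of the minimal tensor product together with $\bigotimes_{i\in\N}\O_\infty\cong\O_\infty$ to get a unital embedding into $\O_\infty$, which is then moved into $A$ exactly as in part~(i). You instead work directly in a unital corner $B=pAp$ (or $B=A$ in the unital case) and realise $C(\{0,1\}^{\N})=\varinjlim\C^{2^n}$ by recursively splitting $1_B$ into a dyadic tree of nonzero orthogonal projections $p_s=p_{s0}+p_{s1}$; this splitting is legitimate because every nonzero projection in a purely infinite simple $\mathrm{C}^*$-algebra is infinite, so $p_sBp_s\neq\C p_s$, and the resulting coherent system of injective (hence isometric) unital maps $\C^{2^n}\to B$ passes to an isometric map on the inductive limit. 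Both arguments are correct. Yours has the mild advantage of not invoking $\O_\infty$-stability or the self-absorption $\bigotimes_{i\in\N}\O_\infty\cong\O_\infty$ for part~(ii) at all --- only simplicity, pure infiniteness and the existence of a projection --- at the cost of having to set up and check coherence of the recursive family; the paper's tensor-product trick is shorter on the page because it offloads all the work onto standard facts about $\O_\infty$ and minimal tensor products.
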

\begin{proof}
    (i): The matrix algebra $\M_{n}$ embeds (not unitally) in $\O_\infty$ via the map $\phi\colon \M_n \rightarrow \O_\infty$ given by $\phi(e_{ij}) = s_is_j^*$, where $(e_{ij})_{i=1,j=1}^n$ are the canonical generators of $\M_n$ and $(s_i)_{i\in\N}$ are the canonical generators of $\O_\infty$.
     Since $A$ is a Kirchberg algebra, $A \cong A \otimes \O_\infty$ and $A$ contains a non-zero projection $p$ (as it has real rank zero). We can therefore define an embedding $\psi\colon \M_{n} \rightarrow A \otimes \O_\infty \cong A$ via $\psi(x) = p \otimes \phi(x)$.

    (ii): As in part (i) (taking $p = 1_A$ if $A$ is unital), it suffices to show that $C(K)$ embeds unitally in $\O_\infty$. By the Hausdorff--Alexandroff theorem (see \cite{Alexandroff27, Hausdorff27}), it suffices to consider $K = \{0,1\}^\N$, the Cantor space. Since $\mathbb{C}^2 \cong D_2 = \operatorname{span}\{s_1 s_1^*, (1 - s_1 s_1^*)\}$, a unital subalgebra of $\mathcal{O}_\infty$, injectivity of the minimal tensor product gives $C(K) \cong \bigotimes_{i \in \mathbb{N}} D_2 \subseteq \bigotimes_{i \in \mathbb{N}} \mathcal{O}_\infty \cong \mathcal{O}_\infty$, which is a unital embedding.
\end{proof}

We now turn to the theory of extensions of $\mathrm{C}^*$-algebras. Our standard reference is \cite[Chapter 15]{Bla86}.
An \emph{extension} of $\mathrm{C}^*$-algebras is a short exact sequence 
\begin{equation}\label{eq:ext1}
0 \rightarrow J \xrightarrow{j} E \xrightarrow{\pi} B \rightarrow 0.
\end{equation} 
We say that $E$ is an extension of $B$ by $J$. We typically identify $J$ with the ideal $j(J) \triangleleft E$ and $B$ with the quotient $E/J$. 
The extension \eqref{eq:ext1} is said to be:
\begin{itemize}
    \item 
\emph{unital} if $E$ is unital (which forces $B$ to be unital);
\item  \emph{essential} if $J$ is an essential ideal in $E$, i.e.\ every non-zero ideal of $E$ intersects $J$ non-trivially;
\item  \emph{trivial} if there is a splitting $B \rightarrow E$ that is a $^*$-homomorphism.
\end{itemize}

An extension as in~\eqref{eq:ext1} induces a canonical $^*$-homomorphism $m_E : E \to M(J)$ given by $m_E(a)x = ax$ for $a\in E$ and $x\in J$. The \emph{Busby invariant} of an extension (\cite{Bus68}) is the unique map $\zeta\colon B \rightarrow Q(J)$ into the corona algebra of $J$ such that the following diagram commutes:
\begin{equation}\label{eqn:busby}
\begin{tikzcd}
E \arrow[r, "\pi", shift left] \arrow[d, "m_E"]               & B \arrow[d, "\zeta"] \\
M(J)\arrow[r, "q_J"]   & Q(J).     
\end{tikzcd}
\end{equation}

The Busby invariant completely encodes an extension via a pullback construction (see \cite[Example~15.3.2]{Bla86}).  In particular, an extension is unital if and only if its Busby invariant is unital. It is essential if and only if its Busby invariant is injective (see \cite[Section~15.2]{Bla86}). 

Two extensions with Busby invariants $\zeta_1, \zeta_2\colon  B \rightarrow Q(J)$ are \emph{strongly unitarily equivalent} if there exists a unitary $u \in M(J)$ such that $\zeta_2(b) = q_J(u)\zeta_1(b)q_J(u)^*$ for all $b \in B$.

The \textit{direct sum} of two extensions with Busby invariants $\zeta_1, \zeta_2\colon  B \rightarrow Q(J)$ is the extension with Busby invariant $\zeta_1 \oplus \zeta_2\colon B \rightarrow \M_2(Q(J)) \cong Q(\M_2(J))$ given by
\begin{equation} \label{directsum}
 	b \mapsto \begin{pmatrix}
 		\zeta_1(b) & 0\\
 		0 & \zeta_2(b)
 	\end{pmatrix}.
\end{equation}
In general this is an extension of $B$ by $\M_2(J)$; however, when $J$ is stable there is a canonical identification $\M_2(J) \cong J$; see \cite[Section 15.6]{Bla86}.

An extension of a separable nuclear $\mathrm{C}^*$-algebra $B$ by a separable stable $\mathrm{C}^*$-algebra $J$ with Busby invariant $\zeta\colon B \rightarrow Q(J)$ is \emph{absorbing} if  $\zeta \oplus \sigma$ is strongly unitarily equivalent to $\zeta$ for all trivial extensions $\sigma$.

For unital extensions, we need to adapt the definition slightly because $\zeta$ is never strongly unitarily equivalent to $\zeta \oplus 0$ if $\zeta$ is unital.
A unital extension is \emph{unitally trivial} if it admits a splitting that is a unital $^*$-homomorphism. A unital extension with Busby invariant $\zeta\colon B \rightarrow Q(J)$ is \emph{unitally absorbing} if $\zeta \oplus \sigma$ is strongly unitarily equivalent to $\zeta$ for every unitally trivial extension~$\sigma$.

The following theorem is well-known to experts but we do not know an explicit reference for the form we need, and so we show how to deduce it from existing results.
\begin{theorem}\label{thm:absorb}
    Let $J$ be a stable Kirchberg algebra and $B$ a separable nuclear $\mathrm{C}^*$-algebra. Then every (unital) essential extension of $B$ by $J$ is (unitally) absorbing.  
\end{theorem}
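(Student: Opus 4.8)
The plan is to reduce the statement to the Elliott--Kucerovsky characterisation of absorbing extensions via the \emph{purely large} condition, and then to verify purely largeness directly from the simplicity and pure infiniteness of $J$. Recall that, for $J$ stable and $\sigma$-unital, an extension $0\to J\to E\to B\to 0$ is purely large if for every $x\in E\setminus J$ the hereditary subalgebra $\overline{xJx^*}\triangleleft J$ contains a stable $\mathrm{C}^*$-subalgebra that is full in $J$. A stable Kirchberg algebra is separable, hence $\sigma$-unital, and $B$ is separable and nuclear; so (after checking the precise hypotheses) the Elliott--Kucerovsky theorem, together with its unital counterpart, yields that a (unital) extension is (unitally) absorbing if and only if it is purely large. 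Since the purely large condition makes no reference to unitality, a single verification settles both assertions at once. Because $B$ is nuclear, every trivial extension is automatically nuclear, so ``nuclearly absorbing'' coincides with ``absorbing'' here.

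The first step is to observe that essentiality forces $m_E\colon E\to M(J)$ to be injective: its kernel is the annihilator of $J$ in $E$, an ideal whose intersection with $J$ vanishes, and which is therefore zero when $J$ is essential. Consequently, for any $x\in E\setminus J$ we have $x\neq 0$, hence $m_E(x)\neq 0$; as $J$ is an essential ideal of $M(J)$, it follows that $xJ=m_E(x)J\neq 0$ and so $\overline{xJx^*}$ is a \emph{nonzero} hereditary subalgebra of $J$ (indeed, it is the hereditary subalgebra of $J$ generated by $m_E(x)m_E(x)^*\in M(J)_+$).

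The second step uses pure infiniteness. Any nonzero hereditary subalgebra $H=\overline{xJx^*}$ of the purely infinite simple algebra $J$ contains a properly infinite projection $p$, so there are isometries $s_1,s_2\in pJp\subseteq H$ with orthogonal range projections. Setting $w_n=s_1^{\,n-1}s_2$ gives isometries with $w_n^*w_m=\delta_{nm}p$, whence the elements $v_{mn}=w_mw_n^*$ form a system of matrix units whose closed linear span is a copy of $\K$ inside $H$. As $\K$ is stable and $J$ is simple, this copy of $\K$ is a stable subalgebra of $\overline{xJx^*}$ that is full in $J$. This verifies that the extension is purely large, and the theorem follows from the equivalence quoted above.

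The routine parts -- injectivity of $m_E$ and the matrix-unit construction -- are forced by essentiality and pure infiniteness, so I do not expect them to be the difficulty. The main obstacle is instead bibliographic and hypothesis-tracking: one must identify the precise form of the absorption theorem valid for $J$ stable and $\sigma$-unital with $B$ separable nuclear, and in particular locate (or deduce) the \emph{unital} version of the purely large characterisation with these hypotheses, since this is exactly the ``form we need'' that lacks a ready reference. Confirming that the unital and nonunital statements share the same purely large criterion, and that nuclearity of $B$ removes any distinction between nuclear and general absorption, is where the real care is needed.
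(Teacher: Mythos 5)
Your verification of the purely large condition is correct, and for \emph{unital} extensions your route coincides with the paper's: the paper cites \cite[Theorem~6]{El01} (unitally absorbing $\Leftrightarrow$ purely large, with ``nuclearly absorbing'' collapsing to ``absorbing'' since $B$ is nuclear) together with \cite[Theorem~17(ii)]{El01}, and the content of the latter is exactly your observation that essentiality makes $\overline{xJx^*}$ a nonzero hereditary subalgebra of the purely infinite simple algebra $J$, which therefore contains an infinite projection and hence a copy of $\K$ that is stable and full. So far, so good.

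The gap is in the non-unital case, and it sits precisely where you said ``the real care is needed'': the unital and non-unital statements do \emph{not} share the same purely large criterion. The claim that a non-unital extension is (nuclearly) absorbing if and only if it is purely large is the flawed assertion in \cite{El01} that Gabe's note \cite{Gabe16} --- which the paper cites for exactly this reason --- was written to correct; there exist purely large non-unital extensions that are not absorbing (for instance, a unital $\zeta$ can never be strongly unitarily equivalent to the non-unital $\zeta\oplus 0$). The correct criterion must see the \emph{unitised} extension: the paper verifies Gabe's hypotheses, namely that $\zeta^\sim\colon B^\sim\to Q(J)$ is injective into the simple corona $Q(J)$ (\cite[Theorem~3.2]{Ro91}), hence $\zeta$ is unitisably full --- and note this step genuinely uses that $\zeta$ is injective \emph{and non-unital} --- and that $J$ has the corona factorisation property (\cite[Proposition~2.1]{Ng06}), whereupon \cite[Theorem~2.6]{Gabe16} applies. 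Your argument is salvageable with little extra work, because your purely-largeness verification applies verbatim to every nonzero $x\in M(J)$ and therefore to the extension algebra $E+\C 1_{M(J)}$ of the unitised extension of $B^\sim$; but as written, ``a single verification settles both assertions at once'' invokes an equivalence that is false in the non-unital setting, so the non-unital half of the theorem is not yet proved.
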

\begin{proof}
    The unital version of this result follows from the general theory of Elliott and Kucerovsky on absorbing extensions by combining \cite[Theorem~6]{El01} with \cite[Theorem~17(ii)]{El01} (noting that, as $B$ is nuclear, an extension of $B$ by $J$ is `absorbing in the nuclear sense' as given by \cite[Theorem~6]{El01} precisely when it is absorbing). However, this special case was first proven by Kirchberg (see \cite[Theorem~6]{Ki95}).

    For the non-unital version, there are some additional subtleties first observed by Gabe in \cite{Gabe16}. 
    Let $\zeta\colon B \rightarrow Q(J)$ be the Busby invariant of a non-unital essential extension of $B$ by $J$. 
    Then $\zeta$ is injective and non-unital. 
    It follows that the map $\zeta'\colon B^\sim \rightarrow Q(J)$ given by $b+ \lambda 1_{B^\sim} \mapsto \zeta(b) + \lambda 1_{Q(J)}$ is also injective. Since $J$ is a stable Kirchberg algebra, $Q(J)$ is simple by \cite[Theorem~3.2]{Ro91}, so $\zeta$ is unitisably full in the sense of \cite[Definition~2.5]{Gabe16}. Moreover $J$ satisfies the corona factorisation property by \cite[Proposition~2.1]{Ng06}. Hence, by \cite[Theorem~2.6]{Gabe16}, $\zeta$ is nuclearly absorbing. Since $B$ is nuclear, $\zeta$ is absorbing.
\end{proof}

\section{Machinery of proof}\label{sec:outline}

In this section, we outline the global strategy for proving Theorem \ref{thm:main} and isolate the technical results needed. 
The starting point is a decomposition that has its roots in \cite{BW19}. 
We formulate this as Proposition~\ref{prop:BW}, and will refer to this proposition throughout the paper to fix notation. 

\begin{proposition}[The Brake--Winter decomposition]\label{prop:BW} ~\\
    Let 
    \begin{equation}0 \rightarrow J \xrightarrow{\iota} E \xrightarrow{\pi} C(X) \rightarrow 0
    \end{equation}
    be an extension with $\mu\colon C(X) \rightarrow E$ a c.p.c.\ splitting. Let $(h_n)_{n=1}^\infty$ be a quasicentral approximate unit for $J$ relative to $E$. 
    Set 
    \begin{align}
    &A_n = \overline{h_nJh_n},\ 
    B_n = \overline{(h_{n+1} - h_n)J(h_{n+1} - h_n)},\text{ and }\nonumber\\&C_n = \overline{(1 - h_{n+1})E(1 - h_{n+1})}.
    \end{align}
    Define three sequences of c.p.c.\ maps 
    \begin{equation}
        \begin{split}
        \alpha_n\colon  E \rightarrow A_n,\quad  a &\mapsto h_n^{1/2} a h_n^{1/2},\\
        \beta_n\colon   C(X) \rightarrow B_n,\quad f &\mapsto (h_{n+1} - h_n)^{1/2}\mu(f)(h_{n+1} - h_n)^{1/2},\\
        \gamma_n\colon  C(X) \rightarrow C_n,\quad f &\mapsto (1-h_{n+1})^{1/2} \mu(f) (1-h_{n+1})^{1/2}.
        \end{split}
    \end{equation}
    Let $\alpha\colon E \rightarrow \prod_{n\to\omega} A_n \subseteq E_\omega$, $\beta\colon C(X) \rightarrow \prod_{n\to\omega} B_n \subseteq E_\omega$ and $\gamma\colon C(X) \rightarrow \prod_{n\to\omega} C_n \subseteq E_\omega$ be the induced c.p.c.\ maps into the respective ultraproducts. 
    Then $\alpha$,$\beta$ and $\gamma$ are c.p.c.\ order zero maps, and we have 
    \begin{equation}\label{eqn:QC-approx-1}
	\alpha(a) + \beta (\pi(a)) + \gamma(\pi(a)) = a
\end{equation}
for all $a \in E$.
%\footnote{Note that when $E$ is non-unital $1-h_{n+1} \in E^\sim$ and $1-\hat{h} \in E_\omega^\sim$. This doesn't affect the proof.}
\end{proposition}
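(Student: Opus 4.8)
The plan is to verify the three claimed properties separately: that each induced map $\alpha,\beta,\gamma$ is c.p.c.\ order zero, and that the reconstruction identity \eqref{eqn:QC-approx-1} holds. Since each of $\alpha_n,\beta_n,\gamma_n$ is a compression of a c.p.c.\ map by a positive contraction (namely $h_n^{1/2}$, $(h_{n+1}-h_n)^{1/2}$, and $(1-h_{n+1})^{1/2}$ respectively), each is manifestly c.p.c., and hence so are the induced maps $\alpha,\beta,\gamma$ into the respective ultraproducts. The only points requiring work are therefore the order-zero property in the limit and the reconstruction identity.

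For the order-zero property, I would focus on $\alpha$; the arguments for $\beta$ and $\gamma$ are identical after replacing $h_n$ by $h_{n+1}-h_n$ or $1-h_{n+1}$. The key is that at each finite stage the compression $a\mapsto h_n^{1/2}ah_n^{1/2}$ is \emph{not} order zero, but becomes asymptotically order zero because $(h_n)$ is quasicentral. Concretely, given $a,b\in E_+$ with $ab=0$, I would estimate $\|\alpha_n(a)\alpha_n(b)\| = \|h_n^{1/2}ah_nbh_n^{1/2}\|$ and use quasicentrality to commute the inner $h_n$ past $b$ (or past $a$), producing $\|h_n^{1/2}ah_n^{1/2}h_n^{1/2}bh_n^{1/2}\|$ up to an error term of the form $\|h_n^{1/2}a[h_n^{1/2},b]h_n^{1/2}\|$, where $[h_n^{1/2},b]=h_n^{1/2}b-bh_n^{1/2}\to 0$ as $n\to\omega$ since $(h_n)$ is quasicentral for $E$ (and quasicentrality of $(h_n)$ passes to $(h_n^{1/2})$ by continuous functional calculus). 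The surviving main term contains $ab=0$, so it vanishes, and the error term tends to $0$ along $\omega$. Thus $\alpha(a)\alpha(b)=0$ in the ultraproduct, giving the order-zero condition. I expect this commutator estimate to be the main technical point, though it is routine once one records that quasicentrality of $(h_n)$ for $E$ gives $\|[f(h_n),a]\|\to 0$ for any continuous $f$ vanishing at $0$ and any $a\in E$.

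For the reconstruction identity, the decomposition $h_n + (h_{n+1}-h_n) + (1-h_{n+1}) = 1$ in $E^\sim$ is what drives everything, but the three pieces are compressed by square roots rather than by the idempotents themselves, and moreover $\beta$ and $\gamma$ are fed $\mu(\pi(a))$ rather than $a$. The plan is to compute, for fixed $a\in E$,
\begin{equation}
\alpha_n(a)+\beta_n(\pi(a))+\gamma_n(\pi(a))
= h_n^{1/2}ah_n^{1/2} + (h_{n+1}-h_n)^{1/2}\mu(\pi(a))(h_{n+1}-h_n)^{1/2} + (1-h_{n+1})^{1/2}\mu(\pi(a))(1-h_{n+1})^{1/2},
\end{equation}
and to show this converges to $a$ along $\omega$. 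The crucial observation is that $a-\mu(\pi(a))\in J$, since $\pi(a-\mu(\pi(a)))=\pi(a)-\pi(a)=0$ using $\pi\circ\mu=\id$. Replacing $\mu(\pi(a))$ by $a-(a-\mu(\pi(a)))$ in the last two terms and using that $(h_{n+1}-h_n)^{1/2}$ and $(1-h_{n+1})^{1/2}$ multiply any element of $J$ to something negligible in the limit (because $(h_n)$ is an approximate unit for $J$, so $(1-h_{n+1})^{1/2}j\to 0$ for $j\in J$), reduces the sum to $h_n^{1/2}ah_n^{1/2}+(h_{n+1}-h_n)^{1/2}a(h_{n+1}-h_n)^{1/2}+(1-h_{n+1})^{1/2}a(1-h_{n+1})^{1/2}$ modulo an error vanishing along $\omega$. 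Finally, quasicentrality lets me replace each symmetric compression $c^{1/2}ac^{1/2}$ by $ca$ up to a commutator error $\to 0$, whereupon the three terms telescope to $(h_n+(h_{n+1}-h_n)+(1-h_{n+1}))a = a$. Collecting the error estimates and passing to the limit along $\omega$ yields \eqref{eqn:QC-approx-1}.
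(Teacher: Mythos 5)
Your treatment of $\alpha$ and of the reconstruction identity is sound and matches the paper's argument in substance (the paper just cleans up the bookkeeping by passing to the ultrapower first, writing $h=[h_n]$, $\hat h=[h_{n+1}]$, where quasicentrality becomes the exact statement that $h$ and $\hat h$ commute with $E$, so all your ``up to commutator'' estimates become identities). However, there is a genuine gap in your order-zero argument for $\beta$ and $\gamma$: these are \emph{not} handled by an argument ``identical'' to the one for $\alpha$ after substituting $h_{n+1}-h_n$ or $1-h_{n+1}$ for $h_n$. The domain has changed from $E$ to $C(X)$ and the maps are precomposed with the c.p.c.\ splitting $\mu$, which is not multiplicative. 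If you run your commutator estimate on $\beta_n(f_1)\beta_n(f_2)$ for $f_1,f_2\in C(X)_+$ with $f_1f_2=0$, the surviving main term contains $\mu(f_1)\mu(f_2)$, not $\mu(f_1f_2)$, and $\mu(f_1)\mu(f_2)$ is in general nonzero --- so the main term does not vanish for the reason you give, and the argument as stated fails.

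The repair is an ingredient you already deploy for the reconstruction identity: since $\pi(\mu(f_1)\mu(f_2))=f_1f_2=0$, the element $\mu(f_1)\mu(f_2)$ lies in $J$ (more generally $\mu(f_1)\mu(f_2)-\mu(f_1f_2)\in J$), and both $h_{n+1}-h_n$ and $1-h_{n+1}$ asymptotically annihilate $J$ because $(h_n)$ is an approximate unit for $J$. Combining this with quasicentrality gives, in the ultraproduct, $\beta(f_1)\beta(f_2)=(\hat h-h)^2\mu(f_1)\mu(f_2)=(\hat h-h)^2\mu(f_1f_2)=0$, and similarly for $\gamma$; this is exactly how the paper argues. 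So the missing idea is present elsewhere in your proposal, but it must be invoked in the order-zero step for $\beta$ and $\gamma$, where the argument you actually describe would not go through.
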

\begin{proof}
    Write $h = [h_n]_{n=1}^\infty \in E_\omega$. Since $(h_n)_{n=1}^\infty$ is quasicentral, $\alpha(a) = ha = ah$ for all $a \in E$. Hence, if $a,b \in E_+$ satisfy $ab=0$, then $\alpha(a)\alpha(b) = habh = 0$. Therefore, $\alpha$ is a c.p.c.\ order zero map.
    
    Write $\hat{h} = [h_{n+1}]_{n=1}^\infty \in E_\omega$. Since $(h_n)_{n=1}^\infty$ is an approximate unit for $J$, we have $ha = a = \hat{h}a$ for all $a \in J \subseteq E_\omega$. Hence, $(\hat{h}-h)a = 0 = (1-\hat{h})a$ for all $a \in J \subseteq E_\omega$. Since $(h_n)_{n=1}^\infty$ is quasicentral, $(\hat{h}-h)$ and $(1-\hat{h})$ commute with all elements of $E \subseteq E_\omega$.

    Let $f_1,f_2 \in C(X)_+$. Then $\mu(f_1)\mu(f_2) - \mu(f_1f_2) \in J$. Hence, $(\hat{h}-h)\mu(f_1)\mu(f_2) = (\hat{h}-h)\mu(f_1f_2)$ and $(1-\hat{h})\mu(f_1)\mu(f_2)= (1-\hat{h})\mu(f_1f_2)$. Therefore, if $f_1f_2 = 0$, then
    \begin{equation}
        \beta(f_1)\beta(f_2) = (\hat{h}-h)^2\mu(f_1)\mu(f_2) =  (\hat{h}-h)^2\mu(f_1f_2) = 0,
    \end{equation}
    and similarly $\gamma(f_1)\gamma(f_2) = 0$. Hence, $\beta$ and $\gamma$ are c.p.c.\ order zero maps.

    Let $a \in E$. Then $\pi(\mu(\pi(a))) = \pi(a)$. Hence, $\mu(\pi(a)) - a \in J$. Therefore,  $(\hat{h}-h)\mu(\pi(a)) = (\hat{h}-h)a$ and $(1-\hat{h})\mu(\pi(a)) = (1-\hat{h})a$.
    Hence
    \begin{align}
        \alpha(a) + \beta(\pi(a)) + \gamma (\pi(a)) &= ha + (\hat{h}-h)\mu(\pi(a)) + (1-\hat{h})\mu(\pi(a)) \nonumber \\
%        &\quad \quad + (1-\hat{h})\mu(\pi(a)) \nonumber\\
        &= ha + (\hat{h}-h)a + (1-\hat{h})a  \\
        &= a.\qedhere \nonumber    
    \end{align}
\end{proof}

The power of the Brake--Winter decomposition comes from the fact that when the quasicentral approximate unit $(h_n)_{n=1}^\infty$ is almost idempotent, the hereditary subalgebras $A_n$ and $C_n$ are orthogonal. This is important as a sum of order zero maps with orthogonal ranges is again order zero. This means that the colours used in nuclear dimension approximations for $\gamma_n$ can be re-used in nuclear dimension approximations for $\alpha_n$, so that 
\begin{equation}
\dimnuc(\alpha_n+\gamma_n \circ \pi)\leq \max(\dimnuc(\alpha_n),\dimnuc(\gamma_n \circ \pi)).
\end{equation}
Since the image of $\alpha_n$ lies in $J$, we have $\dimnuc(\alpha_n) \leq \dimnuc(J)$; since $\gamma_n \circ \pi$ factors through $C(X)$, the estimate $\dimnuc(\gamma_n \circ \pi) \leq \dim(X)$ is true in a limiting sense as $n \to \infty$.\footnote{The careful choice of wording is because $\gamma_n$ is only \emph{approximately} order zero.}  
Therefore, Question~\ref{Question.Main} can be answered affirmatively for a given extension by finding a suitable strategy for handling the $\beta_n$ term without increasing the estimate of $\max(\dimnuc(J),\dim(X))$.\footnote{Note that, if the extension is a direct sum or has a quasicentral approximate unit consisting of projections, then $\beta_n$ can be  taken to be zero.}  

The strategy for handling the $\beta_n$ terms in the proof of Theorem \ref{thm:main} is as follows:
After approximating $\gamma_n$ using a $(\dim(X)+1)$-coloured open cover of $X$, we shall extend the collection of open sets corresponding to the first colour to a Borel partition of $X$, which will be used to approximate $\beta_n$. This requires extending the domain of the approximately order zero c.p.c.\ maps $\beta_n$ to include (some) Borel functions that are not continuous. 

In Section \ref{subsec:extending-orderzero}, we use Gabe's $\O_\infty$-stable uniqueness theorem to prove a suitable extension theorem for the c.p.c.\ order zero map $\beta$ in the Brake--Winter decomposition under the hypothesis that  $\HM(\beta)$ is injective.\footnote{Recall from \eqref{DefHM} that $\HM(\beta)$ is the $^*$-homomorphism whose domain is a cone corresponding to the c.p.c.\ order zero map $\beta$.}
In Section \ref{subsec:GT}, we show that the quasicentral approximate unit $(h_n)_{n=1}^\infty$ can be chosen such that the corresponding Brake--Winter decomposition does have the property that $\HM(\beta)$ is injective. 
As discussed in the introduction, to prove Theorem \ref{thm:main} in the case $\dim(X)=1$ requires some extra work: we also need to approximate the maps $\alpha_n$ in such a way that one colour is orthogonal to $\beta_n$. This is the topic of Section \ref{subsec:dim1}.

\subsection{Extending order zero maps} \label{subsec:extending-orderzero}

Starting from the Brake--Winter decomposition (Proposition \ref{prop:BW}), the key idea in \cite{GT22} is to choose the quasicentral approximate unit $(h_n)_{n=1}^\infty$ so that the $B_n$ are matrix algebras and then use Lin's uniqueness theorem for approximately multiplicative maps into matrix algebras \cite[Theorem~2.10]{Lin17} to extend $\beta$ to a c.p.c.\ order zero map on the $\mathrm{C}^*$-algebra $B(X)$ of all bounded Borel functions on $X$. 

The appearance of matrix algebras and the applicability of Lin's uniqueness theorem is specific to the extensions considered in \cite{GT22} where the ideal is $\K$. To transfer this idea to purely infinite ideals, we would like to prove that a c.p.c.\ order zero map $\beta\colon C(X) \rightarrow \prod_{n\to\omega} B_n$ into an ultraproduct of Kirchberg algebras can be extended to a c.p.c.\ order zero map defined on the $\mathrm{C}^*$-algebra $B(X)$ of all bounded Borel functions on $X$. In fact, we prove something slightly weaker:  $\iota_{11} \circ \beta\colon C(X) \rightarrow \M_2(\prod_{n\to\omega} B_n)$ can be extended to any \emph{separable} abelian $\mathrm{C}^*$-algebra containing $C(X)$ provided $\HM(\beta)$ is injective. This is sufficient for the proof of Theorem \ref{thm:main}.

We begin by deriving the version of Gabe's uniqueness theorem (\cite[Theorem~B]{GabeOinf}) that we need to prove the extension theorem. 
\begin{proposition}\label{prop:unitaryequivhomgen}
    Let $B_n$ be a sequence of Kirchberg algebras. Write $B_\omega = \prod_{n\to\omega} B_n$.\footnote{This is the first occurrence of abuse of notation we will make use of in this paper: $B_\omega$ is just a shorthand for the ultraproduct of the $B_n$, not an ultrapower.}
    Let $E$ be a separable nuclear $\mathrm{C}^*$-algebra.  Suppose $\phi, \psi\colon C_0(0,1]\otimes E \rightarrow B_\omega$ are injective $^*$-homomorphisms.
    Then there exists a unitary $u$ in the minimal unitisation $\M_2(B_\omega)^\sim$ of $\M_2(B_\omega)$ such that $\Ad(u)\circ(\iota_{11}\circ\phi) = \iota_{11}\circ \psi.$
\end{proposition}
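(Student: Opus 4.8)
The plan is to reduce Proposition~\ref{prop:unitaryequivhomgen} to Gabe's $\mathcal{O}_\infty$-stable uniqueness theorem (\cite[Theorem~B]{GabeOinf}) by verifying its hypotheses for the two $^*$-homomorphisms $\phi,\psi\colon C_0(0,1]\otimes E \to B_\omega$. Gabe's theorem typically asserts that two full, nuclear, $\mathcal{O}_\infty$-stable $^*$-homomorphisms that agree on $K$-theory (or are suitably asymptotically unitarily equivalent) are in fact unitarily equivalent, after stabilising by $\iota_{11}$ to create room for the unitary. So first I would record that the domain $C_0(0,1]\otimes E$ is separable and nuclear (since $E$ is separable nuclear and $C_0(0,1]$ is nuclear), and that it is a cone, hence homotopic to zero; this is the key structural fact that will trivialise the $KK$-theoretic obstruction. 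Because $C_0(0,1]\otimes E$ is contractible, any two $^*$-homomorphisms out of it induce the same (namely zero) class in $KK$, so the hypothesis in Gabe's theorem concerning agreement of invariants is automatic.

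Next I would check the codomain and the fullness/nuclearity conditions. The algebras $B_n$ are Kirchberg, hence $\mathcal{O}_\infty$-stable, and this passes to the ultraproduct $B_\omega=\prod_{n\to\omega}B_n$; Gabe's framework is designed precisely for $\mathcal{O}_\infty$-stable targets, so this is where the purely infinite hypothesis is used. For fullness: here is where the injectivity assumption on $\phi$ and $\psi$ enters. Since each $B_n$ is simple, the ultraproduct $B_\omega$ has the property that every nonzero element generates a full ideal in the relevant sense, so an injective $^*$-homomorphism into $B_\omega$ automatically has full image (no nonzero element maps to something contained in a proper ideal). I would spell out that injectivity of $\phi$ (equivalently, of $\psi$) guarantees the maps are full, which is a standing hypothesis of the uniqueness theorem. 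One should also confirm nuclearity of the maps, which follows since the domain is nuclear.

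Having verified that both $\iota_{11}\circ\phi$ and $\iota_{11}\circ\psi$ are full, nuclear, $\mathcal{O}_\infty$-stable $^*$-homomorphisms from a contractible separable nuclear $\mathrm{C}^*$-algebra into $\M_2(B_\omega)$, and that they agree in $KK$ (both being zero by contractibility), Gabe's theorem yields a unitary $u$ in the minimal unitisation $\M_2(B_\omega)^\sim$ with $\Ad(u)\circ(\iota_{11}\circ\phi)=\iota_{11}\circ\psi$. The stabilisation by $\iota_{11}$ into $\M_2$ is exactly the standard device that provides the orthogonal complement needed to house the conjugating unitary, and it is why the conclusion is stated at the level of $\M_2(B_\omega)$ rather than $B_\omega$ itself.

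The main obstacle I anticipate is matching the precise hypotheses of \cite[Theorem~B]{GabeOinf} to the present situation, rather than any deep new argument. In particular, I expect the delicate points to be: (i) confirming that the relevant $\mathcal{O}_\infty$-stability and fullness conditions in Gabe's formulation are genuinely implied by ``$B_n$ Kirchberg'' plus ``$\phi,\psi$ injective'', especially since $B_\omega$ is a (possibly non-separable, non-simple) ultraproduct rather than a single Kirchberg algebra, so one must check that Gabe's theorem applies with a codomain of this form; and (ii) ensuring the unitary can be taken in the \emph{unitisation} $\M_2(B_\omega)^\sim$ (not requiring $\M_2(B_\omega)$ to be unital), which should come for free from how the uniqueness theorem produces its implementing unitary. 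I would therefore devote the bulk of the write-up to a careful citation-by-citation verification of these hypotheses, invoking the contractibility of the cone $C_0(0,1]\otimes E$ to dispose of the $KK$-obstruction and the simplicity of the $B_n$ together with injectivity to secure fullness.
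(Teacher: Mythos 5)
Your overall strategy is the paper's: reduce to Gabe's uniqueness theorem \cite[Theorem~B]{GabeOinf}, using contractibility of the cone to kill the $KK$-obstruction, simplicity plus injectivity for fullness, and nuclearity of the domain for nuclearity of the maps. However, the two points you flag as ``delicate'' and defer are exactly where the real work lies, and neither resolves itself. First, Gabe's theorem requires a $\sigma$-unital codomain, and the ultraproduct $B_\omega$ is neither separable nor $\sigma$-unital, so the theorem genuinely does not apply to $B_\omega$ as stated. The paper's fix is the machinery of separably inheritable properties \cite[Section~II.8.5]{Bl06}: $B_\omega$ is simple by \cite[Remark~2.4]{Kir06} and separably $\O_\infty$-stable by \cite[Proposition~1.12]{Schaf20}, both properties are separably inheritable, so one can interpose a separable, simple, $\O_\infty$-stable subalgebra $D$ with $\phi(A)\cup\psi(A)\subseteq D\subseteq B_\omega$ and apply Gabe's theorem with codomain $D$. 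Without some such reduction your argument does not get off the ground.

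Second, the conclusion of \cite[Theorem~B]{GabeOinf} is \emph{asymptotic} Murray--von Neumann equivalence, not the exact unitary equivalence you assert. This must be upgraded in two stages: \cite[Proposition~3.10]{Gabe20} converts asymptotic Murray--von Neumann equivalence of $\phi,\psi$ into asymptotic unitary equivalence of $\iota_{11}\circ\phi$ and $\iota_{11}\circ\psi$ via unitaries in $\M_2(B_\omega)^\sim$ (this is where the $\iota_{11}$ stabilisation is actually used, rather than merely ``creating room''), and then Kirchberg's $\epsilon$-test \cite[Lemma~A.4]{Kir06} converts the asymptotic equivalence into an exact one. This last step is not ``for free'': in a general $\mathrm{C}^*$-algebra an asymptotic unitary equivalence need not be implemented by a single unitary; it is precisely the ultraproduct structure of $\M_2(B_\omega)$ that permits the $\epsilon$-test to produce the exact unitary $u$. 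Your write-up needs both of these missing steps to be a proof.
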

\begin{proof}
    Let $A = C_0(0,1]\otimes E$ for convenience. Since $E$ is separable and nuclear, $A$ is separable and nuclear.

    To apply Gabe's uniqueness theorem \cite[Theorem~B]{GabeOinf}, we first use the theory of separably inheritable properties (see \cite[Section II.8.5]{Bl06}) to replace $B_\omega$ with a separable subalgebra $D \subseteq B_\omega$ containing $\phi(A)$ and $\psi(A)$ that is both simple and $\O_\infty$-stable.
    
    Since each $B_n$ is simple and purely infinite, $B_\omega$ is simple by \cite[Remark~2.4]{Kir06}. Simplicity is separably inheritable by \cite[Theorem~II.8.5.6]{Bl06}.
    Since each $B_n$ is Kirchberg, we have $B_n \cong B_n \otimes \O_\infty$ for all $n \in \N$, so \cite[Proposition~1.12]{Schaf20} says that the ultraproduct is \emph{separably} $\O_\infty$-stable, meaning that for every separable subalgebra $C_1 \subseteq B_\omega$ there is a separable $\O_\infty$-stable subalgebra $C_2$ with $C_1 \subseteq C_2 \subseteq B_\omega$. Being separably $\O_\infty$-stable is separably inheritable as $\O_\infty$-stability is preserved by sequential inductive limits by \cite[Corollary~3.4]{TW07}. Being both simple and separably $\O_\infty$-stable is therefore separably inheritable by \cite[Theorem~II.8.5.3]{Bl06}. So the required simple separable $\O_\infty$-stable subalgebra  $D \subseteq B_\omega$  exists.
    
    View $\phi$ and $\psi$ as maps $A \rightarrow D$. Since $A$ is nuclear, $\phi$ and $\psi$ are nuclear. Since $D$ is $\O_\infty$-stable, $\phi$ and $\psi$ are strongly $\O_\infty$-stable by \cite[Proposition~4.5]{GabeOinf}. Since $D$ is simple, injectivity of $\phi$ and $\psi$ ensures that they are full.
    As $D$ is separable, it is $\sigma$-unital.
    Since $C_0(0,1]\otimes E$ is homotopic to $0$, it follows that $KK_{nuc}(\phi) = KK_{nuc}(\psi) = 0$. Therefore, by \cite[Theorem~B]{GabeOinf}, $\phi,\psi\colon A \to D$ are asymptotically Murray--von Neumann equivalent (in the sense of \cite[Definition~3.4]{Gabe20}) and hence so are $\phi,\psi\colon  A\to B_\omega$.
	
    By \cite[Proposition~3.10]{Gabe20}, $\iota_{11}\circ\phi,\iota_{11} \circ \psi \colon  A\to \M_2(B_\omega)$ are asymptotically unitarily equivalent via unitaries in $\M_2(B_\omega)^\sim$. By an application of Kirchberg's $\epsilon$-test (\cite[Lemma~A.4]{Kir06}), there exists $u \in \M_2(B_\omega)^\sim$ such that $\Ad(u)\circ(\iota_{11}\circ\phi) = \iota_{11}\circ \psi.$
\end{proof}

We deduce the promised result on extending c.p.c.\ order zero maps.

\begin{proposition}\label{prop:extend}
 Let $X$ be a compact metric space. Let $B_n$ be a sequence of Kirchberg algebras. Write $B_\omega = \prod_{n\to\omega}B_n$. 
 Let $\beta\colon C(X) \rightarrow B_\omega$ be a c.p.c.\ order zero map such that $\HM(\beta)$ is injective. Let $D$ be a separable abelian $\mathrm{C}^*$-algebra containing $C(X)$.
 Then $\iota_{11} \circ \beta\colon C(X) \rightarrow \M_2(B_\omega)$ can be extended to an order zero map $D \rightarrow \M_2(B_\omega)$.
\end{proposition}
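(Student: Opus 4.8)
The plan is to manufacture a single injective $^*$-homomorphism out of the whole cone $C_0(0,1]\otimes D$ into $B_\omega$, and then to use Gabe's uniqueness theorem, in the form of Proposition~\ref{prop:unitaryequivhomgen}, on the sub-cone $C_0(0,1]\otimes C(X)$ to rotate this ``model'' map into agreement with $\HM(\beta)$. The rotated model map is then the $\HM$ of the desired extension. The key conceptual move is to split the two roles played by $C_0(0,1]\otimes D$: as a large abelian cone it admits a cheap injective embedding, while on the small sub-cone $C_0(0,1]\otimes C(X)$ the map is prescribed, and uniqueness lets us reconcile the two.

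First I would construct the model embedding. Since $D$ is separable and abelian, so is the non-unital cone $C_0(0,1]\otimes D$; its forced unitisation is a unital separable abelian $\mathrm{C}^*$-algebra, hence isomorphic to $C(K)$ for some compact metric space $K$ (concretely, the one-point compactification of the spectrum of $C_0(0,1]\otimes D$, which is metrizable as the spectrum is locally compact and second countable). Proposition~\ref{prop:embeddings}(ii) then supplies an embedding $C(K)\hookrightarrow B_n$ for each $n$, and restricting to the cone gives isometric $^*$-homomorphisms $\theta_n\colon C_0(0,1]\otimes D\to B_n$. The induced map $\Phi_0=[\theta_n(\cdot)]_{n=1}^\infty\colon C_0(0,1]\otimes D\to B_\omega$ is then injective, being isometric, since the ultraproduct norm is $\lim_{n\to\omega}\|\theta_n(\cdot)\|$.

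Next I would restrict $\Phi_0$ to $C_0(0,1]\otimes C(X)$, regarded as a sub-$\mathrm{C}^*$-algebra via $C(X)\subseteq D$ (tensoring the inclusion with $\id_{C_0(0,1]}$ stays injective as $C_0(0,1]$ is nuclear). Both $\phi:=\Phi_0|_{C_0(0,1]\otimes C(X)}$ and $\psi:=\HM(\beta)$ are then injective $^*$-homomorphisms $C_0(0,1]\otimes C(X)\to B_\omega$: the former because $\Phi_0$ is injective, the latter by hypothesis. Since $C(X)$ is separable and nuclear, Proposition~\ref{prop:unitaryequivhomgen} (applied with $E=C(X)$) yields a unitary $u\in\M_2(B_\omega)^\sim$ with $\Ad(u)\circ(\iota_{11}\circ\phi)=\iota_{11}\circ\HM(\beta)$. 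Finally I would set $\Psi:=\Ad(u)\circ\iota_{11}\circ\Phi_0\colon C_0(0,1]\otimes D\to\M_2(B_\omega)$, which is a $^*$-homomorphism, so that $\OZ(\Psi)\colon D\to\M_2(B_\omega)$ is a c.p.c.\ order zero map. For $f\in C(X)$ the element $\id_{(0,1]}\otimes f$ lies in the sub-cone on which $\Psi$ agrees with $\iota_{11}\circ\HM(\beta)$, so by \eqref{DefHM} we get $\OZ(\Psi)(f)=\iota_{11}(\HM(\beta)(\id_{(0,1]}\otimes f))=\iota_{11}(\beta(f))$; hence $\OZ(\Psi)$ extends $\iota_{11}\circ\beta$, as required.

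The genuinely load-bearing input is the uniqueness theorem (Proposition~\ref{prop:unitaryequivhomgen}), which I am permitted to assume; granted that, there is no serious obstacle, only a design choice. The step most easily botched is ensuring that the model embedding restricts to an \emph{injective} map on the $C(X)$-cone, since injectivity of both $\phi$ and $\psi$ is exactly the hypothesis needed for the uniqueness theorem to apply — this is why I insist that $\Phi_0$ be isometric rather than merely a $^*$-homomorphism, and why the injectivity of $\HM(\beta)$ enters as a hypothesis of the proposition. The appearance of $\iota_{11}$ and $\M_2(B_\omega)$ in the conclusion (rather than an extension of $\beta$ itself) is forced on us: the uniqueness theorem only delivers unitary equivalence after passing to $\M_2$, which is precisely the weakening flagged in the discussion preceding the statement.
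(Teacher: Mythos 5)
Your proposal is correct and follows essentially the same route as the paper: both construct an injective $^*$-homomorphism on the full cone $C_0(0,1]\otimes D$ via Proposition~\ref{prop:embeddings}(ii) (the paper embeds $C([0,1]\times Z)$ with $D\cong C(Z)$ and restricts, you unitise the cone directly), restrict it to $C_0(0,1]\otimes C(X)$, apply Proposition~\ref{prop:unitaryequivhomgen} to rotate it onto $\iota_{11}\circ\HM(\beta)$, and read off the extension as $\OZ$ of the rotated map. No gaps.
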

\begin{proof}	
 View $D$ as $C(Z)$ for a compact metric space $Z$. By Proposition~\ref{prop:embeddings}(ii), $C([0,1] \times Z)$ embeds in $B_n$ for each $n \in \N$. Hence, there exists an embedding, $C([0,1] \times Z) \rightarrow B_\omega$.
 Let $\psi\colon C_0((0,1] \times Z) \rightarrow B_\omega$ be the restriction of this embedding to $C_0((0,1]\times Z)$.
 
 Let $\phi = \HM(\beta)\colon C_0((0,1] \times X) \rightarrow B_\omega$ be the $^*$-homomorphism corresponding to the c.p.c.\ order zero map $\beta$. By hypothesis, $\phi$ is injective. 
 
 Applying Proposition~\ref{prop:unitaryequivhomgen} with $E=C(X)$ to $\phi$ and $\psi$, we see that $\iota_{11} \circ \phi$ and $\iota_{11} \circ \psi|_{C_0((0,1] \times X)}$ are unitarily equivalent via a unitary $u \in \M_2(B_\omega)^\sim$. Then $\iota_{11}(\phi(f)) = u\iota_{11}(\psi(f))u^*$ for all $f \in C_0((0,1] \times X)$. Therefore $\mathrm{Ad}(u) \circ (\iota_{11} \circ \psi)$ is an extension of $\iota_{11} \circ \phi$ to $C_0((0,1] \times Z)$. Hence $g \mapsto \mathrm{Ad}(u) \circ \iota_{11}(\psi(\id_{(0,1]} \otimes g))$ is a c.p.c.\ order zero extension of $\iota_{11} \circ \beta$.
\end{proof}

\subsection{Ensuring injectivity}\label{subsec:GT}

The Brake--Winter decomposition  works for any quasicentral approximate unit $(h_n)_{n=1}^\infty$. In this section, we shall show that, for any essential extension of $C(X)$ by a stable Kirchberg algebra, an almost idempotent quasicentral approximate unit can be chosen such that the c.p.c.\ order zero map $\beta$ in the Brake--Winter decomposition (using the notation of Proposition~\ref{prop:BW}) has the property that $\HM(\beta)$ is injective. This is necessary in order to apply our order zero extension result (Proposition~\ref{prop:extend}).

The method of proof is to first construct a trivial extension where the result holds, and then use that essential extensions of separable nuclear $\mathrm{C}^*$-algebras by Kirchberg algebras absorb trivial representations (see Theorem~\ref{thm:absorb}). This technique has its roots in Gardner and Tikuisis' work on the nuclear dimension of generalised Toeplitz algebras \cite{GT22}. 
Our starting point is the following proposition, which is essentially a restatement of \cite[Lemma~3.1]{GT22}.
To simply notation, we write $\tr_\omega$ for the canonical limit trace on $\prod_{n \to \omega} \M_{m_n}$ given by $\tr_\omega([x_n]_{n=1}^\infty) = \lim_{n\to\omega} \tr_{m_n}(x_n)$ -- this is in fact the unique trace on $\prod_{n \to \omega} \M_{m_n}$ by \cite[Theorem~ 1.2]{NR16}.

\begin{proposition}\label{prop:GT}
Let $X$ be a compact metric space and let $\tau$ be a state on $C_0(0,1] \otimes C(X)$. 
There exist a unital $^*$-homomorphism $\psi_0\colon C(X) \rightarrow M(\K)$, an almost idempotent quasicentral approximate unit $(h_n^{(0)})_{n=1}^\infty$ for $\K$ relative to $\mathcal{T}_{\psi_0} \colon = \K + \psi_0(C(X))$, and a sequence $(m_n)_{n=1}^\infty$ in $\mathbb{N}$ such that
\begin{enumerate}
    \item 
$\overline{\psi}_0 = q_\K \circ \psi_0\colon C(X) \rightarrow Q(\K)$ is an injective $^*$-homomorphism, 
\item $\overline{(h_{n+1}^{(0)}-h_n^{(0)})\K(h_{n+1}^{(0)}-h_n^{(0)})} \cong \M_{m_n}$ for all $n \in \N$, and 
\item the c.p.c. order zero map given by
\begin{align}
	\Psi_0\colon C(X) &\rightarrow \prod_{n \to \omega} \M_{m_n}, \quad
		f	  \mapsto [(h_{n+1}^{(0)}-h_n^{(0)})\psi_0(f)]_{n=1}^\infty 
\end{align}	
satisfies 
\begin{equation}
    \tr_\omega(\HM(\Psi_0)(f)) = \tau(f),\quad f \in C_0(0,1] \otimes C(X).
\end{equation}
\end{enumerate}
\end{proposition}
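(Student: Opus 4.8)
The plan is to translate the problem into a weak$^*$ approximation of measures and to realise $\tau$ as a limit trace of an explicit diagonal model, following the strategy of \cite[Lemma~3.1]{GT22}. Identifying $C_0(0,1]\otimes C(X)$ with $C_0((0,1]\times X)$, the state $\tau$ corresponds by the Riesz representation theorem to a Borel probability measure $\nu$ on $(0,1]\times X$; since $X$ is compact metric this is a Radon measure, and we may regard it as a probability measure on the compact space $[0,1]\times X$ carrying no mass on $\{0\}\times X$. The model will be built on a Hilbert space $H=\bigoplus_j \C e_j$, with $\psi_0\colon C(X)\to B(H)=M(\K)$ the diagonal representation $\psi_0(f)e_j=f(x_j)e_j$ for a suitable assignment of points $x_j\in X$, and with $(h_n^{(0)})$ diagonal in the same basis. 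Because $\psi_0(f)$ and each $h_n^{(0)}$ are then simultaneously diagonal they commute exactly, so any approximate unit of this form is automatically quasicentral relative to $\mathcal T_{\psi_0}$; this disposes of quasicentrality for free.

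The constraints of an almost idempotent approximate unit pin down the available eigenvalue patterns. Writing $h_n^{(0)}e_j=\mu_j^{(n)}e_j$, the requirements that $(\mu_j^{(n)})_n$ be increasing in $[0,1]$ with $\mu_j^{(n+1)}\mu_j^{(n)}=\mu_j^{(n)}$ force each such sequence into the staircase form $0,\dots,0,\lambda_j,1,1,\dots$ for some birth stage $s_j$ and value $\lambda_j\in(0,1)$. Consequently $e_j$ contributes to exactly the two consecutive differences $d_{s_j-1}=h^{(0)}_{s_j}-h^{(0)}_{s_j-1}$ (with eigenvalue $\lambda_j$) and $d_{s_j}=h^{(0)}_{s_j+1}-h^{(0)}_{s_j}$ (with eigenvalue $1-\lambda_j$). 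Taking all $\lambda_j\in(0,1)$, each $d_n$ is a positive finite-rank operator that is invertible on its range, so $\overline{d_n\K d_n}\cong\M_{m_n}$ with $m_n=\mathrm{rank}(d_n)$, giving condition~(2); and the order zero map $\Psi_0$ becomes $f\mapsto[\,\mathrm{diag}(t_i f(x_i))\,]$ over the support of $d_n$, whence $\HM(\Psi_0)(g\otimes f)=[\,\mathrm{diag}(g(t_i)f(x_i))\,]$ and $\tr_\omega(\HM(\Psi_0)(g\otimes f))=\lim_{n\to\omega}\int g\otimes f\,d\nu_n$, where $\nu_n$ is the uniform probability measure on the atoms $(t_i,x_i)$ of $d_n$.

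It remains to design the atoms so that $\nu_n\to\nu$ weak$^*$ while simultaneously forcing injectivity of $\overline\psi_0$. The main obstacle is the \emph{coupling between consecutive differences} created by almost idempotency: each $e_j$ born at stage $s_j$ carries its young contribution $\lambda_j$ into $d_{s_j-1}$ but then pollutes $d_{s_j}$ with a mature contribution $1-\lambda_j$ at the same point $x_j$, and for atoms with small $t$ this pollution lies far from $\{0\}\times X$ and cannot be discarded naively. I would resolve this by letting the block sizes grow very fast, so that the number of vectors maturing into $d_n$ is $o(m_n)$; then the mature part of $\nu_n$ has total mass tending to $0$ and is negligible, and $\nu_n$ is governed by its young part, which I choose (using weak$^*$ density of finitely supported probability measures on $[0,1]\times X$, with atoms perturbed into $(0,1)$) to converge to $\nu$. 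Injectivity of $\overline\psi_0$ is arranged in parallel: I include in each block a growing initial segment of a fixed dense sequence $(y_k)$ in $X$, assigned \emph{small} values $\lambda_j$; these density points appear in infinitely many blocks, so $\psi_0(f)$ is non-compact whenever $f\neq0$, while their small $t$-parameters (hence $g(t)\approx0$) and $o(m_n)$ count keep them from disturbing the limit trace. Checking that $(h_n^{(0)})$ is a genuine approximate unit for $\K$, that $\mathcal T_{\psi_0}$ is the asserted essential extension, and that equality of functionals on elementary tensors propagates by linearity and continuity to all of $C_0(0,1]\otimes C(X)$ is then routine.
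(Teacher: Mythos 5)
Your proposal is essentially correct, but it takes a very different route from the paper: the paper's proof of this proposition is only a reformulation, deferring the entire construction of $\psi_0$ and $(h_n^{(0)})_{n=1}^\infty$ to \cite[Lemma~3.1]{GT22} and merely checking that the ultraproduct statement matches (namely, that $\Psi_0$ is the $\beta$-map of a Brake--Winter decomposition for the extension $\K \to \mathcal{T}_{\psi_0} \to C(X)$, hence c.p.c.\ order zero by Proposition~\ref{prop:BW}, and that $\HM(\Psi_0)$ agrees on elements $\id_{(0,1]}^k \otimes g$ with the maps appearing in that lemma, so that the trace identity transfers). You instead rebuild the Gardner--Tikuisis construction from scratch, and your structural analysis is sound: the diagonal model makes quasicentrality automatic; almost idempotency really does force the staircase eigenvalue pattern $0,\dots,0,\lambda_j,1,1,\dots$, so each basis vector contributes to exactly two consecutive differences; you correctly identify the resulting coupling between $\nu_n$ and $\nu_{n+1}$ as the main obstruction and resolve it by making the block sizes grow superlinearly so the mature atoms carry relative mass $N_n/(N_n+N_{n+1}) \to 0$; and seeding each block with an $o(m_n)$-sized initial segment of a dense sequence correctly forces $\psi_0(f) \notin \K$ for $f \neq 0$ without perturbing the limit trace. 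What your approach buys is self-containedness (no reliance on the external lemma); what it costs is that several steps remain sketched -- the weak$^*$ approximation of $\nu$ by empirical measures with atoms in $(0,1)\times X$ (including the case where $\nu$ charges $\{1\}\times X$, handled by taking $\lambda_j$ near but below $1$), the bookkeeping ensuring every $d_n$ is nonzero and finite rank so that condition~(2) holds for all $n$, and the passage from elementary tensors to all of $C_0((0,1]\times X)$ -- though all of these are routine and I see no genuine gap.
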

\begin{proof}
    This is an ultraproduct reformulation of \cite[Lemma~3.1]{GT22}. The construction of $\psi_0$ and $(h_n^{(0)})_{n=1}^\infty$ are given there. We shall just explain the reformulation.
    
    Co-restricting $\psi_0$ to a map $C(X) \rightarrow \mathcal{T}_{\psi_0}$, we see that it defines a c.p.c.\ splitting of the extension $\K \rightarrow \mathcal{T}_{\psi_0} \rightarrow \overline{\psi}_0(C(X)) \cong C(X)$. Hence, $\Psi_0$ is just the $\beta$ map in the Brake--Winter decomposition of this extension with respect to $(h_n^{(0)})_{n=1}^\infty$. It is therefore is c.p.c.\ order zero by Proposition~\ref{prop:BW}.
    
    For $k \in \N$ and $g \in C(X)$,
    \begin{equation}
        \begin{split}
        \HM(\Psi_0)(\id_{(0,1]}^k \otimes g) &= \HM(\Psi_0)(\id_{(0,1]} \otimes g^{1/k})^k \\
        &= [((h_{n+1}^{(0)}-h_n^{(0)})\psi_0(g^{1/k}))^k]_{n=1}^\infty \\
        &=  [(h_{n+1}^{(0)}-h_n^{(0)})^{k/2}\psi_0(g)(h_{n+1}^{(0)}-h_n^{(0)})^{k/2}]_{n=1}^\infty.
        \end{split}
    \end{equation}
    Hence, $\HM(\Psi_0)$ is the map induced by the sequence of c.p.c.\ maps constructed in \cite[Lemma~3.1]{GT22}. Therefore, $\tr_\omega(\HM(\Psi_0)(f)) = \tau(f)$ for all $f \in C_0((0,1] \times X)$ by \cite[Lemma~3.1]{GT22}.
\end{proof}

From the above result for extensions by the compacts, we now deduce a similar result for extensions by stable Kirchberg algebras.

\begin{corollary}\label{cor:trivial-ext}
    Let $X$ be a compact metric space and $J$ a stable Kirchberg $\mathrm{C}^*$-algebra.
 There exist a unital $^*$-homomorphism $\psi\colon C(X) \rightarrow M(J)$ and an almost idempotent quasicentral approximate unit $(h_n)_{n=1}^\infty$ for $J$ relative to $J + \psi(C(X))$ such that
 \begin{enumerate}
     \item
$\overline{\psi} = q_J \circ \psi\colon C(X) \rightarrow Q(J)$ is an injective $^*$-homomorphism, and
\item the $^*$-homomorphism $\HM(\Psi)$ associated to the c.p.c. order zero map 
    \begin{equation}
        \begin{split}\label{eqn:Psi}
	\Psi\colon C(X) &\rightarrow \prod_{n \to \omega} \overline{(h_{n+1}-h_n)J(h_{n+1}-h_n)}\\
		f	  &\mapsto [(h_{n+1}-h_n)\psi(f)]_{n=1}^\infty 
        \end{split}
    \end{equation}	
    is injective.
\end{enumerate}
\end{corollary}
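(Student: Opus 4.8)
The plan is to import the injectivity that Proposition~\ref{prop:GT} establishes over the compacts into the purely infinite setting. First I would fix a faithful state $\tau$ on $C_0(0,1]\otimes C(X)$; one exists because $C_0(0,1]\otimes C(X)\cong C_0((0,1]\times X)$ is separable and commutative. Applying Proposition~\ref{prop:GT} to $\tau$ yields $\psi_0$, an almost idempotent quasicentral approximate unit $(h_n^{(0)})$ for $\K$, and the order zero map $\Psi_0$ with $\tr_\omega(\HM(\Psi_0)(f))=\tau(f)$ for all $f\in C_0(0,1]\otimes C(X)$. Faithfulness of $\tau$ upgrades this to injectivity of $\HM(\Psi_0)$: if $\HM(\Psi_0)(a)=0$ then $\HM(\Psi_0)(a^*a)=0$, so $\tau(a^*a)=\tr_\omega(\HM(\Psi_0)(a^*a))=0$, whence $a=0$. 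Together with Proposition~\ref{prop:GT}(1), this produces an essential trivial extension of $C(X)$ by $\K$ whose Brake--Winter $\beta$-map has injective associated $^*$-homomorphism.

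The substance is to transfer this to $J$: I must produce a unital $^*$-homomorphism $\psi\colon C(X)\to M(J)$ and an \emph{honest} almost idempotent quasicentral approximate unit $(h_n)\subseteq J$ for which the $\beta$-map $\Psi$ of~\eqref{eqn:Psi} has injective $\HM(\Psi)$. The strategy is to carry the block data $(m_n)$ of the compact model over to $J$. Using that $J$ is a stable Kirchberg algebra of real rank zero, I would build $(h_n)$ so that each gap $\overline{(h_{n+1}-h_n)J(h_{n+1}-h_n)}$ is a Kirchberg algebra (automatic, as hereditary subalgebras of Kirchberg algebras are Kirchberg) containing the matrix block $\M_{m_n}$ via Proposition~\ref{prop:embeddings}(i), and so that the positive element $[h_{n+1}-h_n]$ of the ultraproduct reproduces, after compressing to these blocks, the spectral behaviour of $[h_{n+1}^{(0)}-h_n^{(0)}]$; injectivity of $\HM(\Psi)$ should then be inherited from that of $\HM(\Psi_0)$. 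For condition~(1), if the construction does not already make $\overline{\psi}$ injective, I would fix an essential trivial extension of $C(X)$ by $J$ (one exists since $J$ is a stable Kirchberg algebra) and replace the extension by its direct sum with it, identifying $\M_2(J)\cong J$ via stability. With the diagonal approximate unit, the Busby invariant of the sum is injective because the essential summand is, while $\HM(\Psi)$ for the sum is the direct sum $\HM(\Psi_{(A)})\oplus\HM(\Psi_{(B)})$ of the two cones, whose kernel is the intersection of the two kernels and so is trivial once one summand is injective; the sum thus satisfies both (1) and (2).

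The main obstacle is precisely this trace-free transfer. Over $\K$ the trace $\tr_\omega$ furnishes a direct certificate of injectivity, but $J$ carries no trace; moreover the naive device of moving $(h_n^{(0)})$ to the multipliers $h_n^{(0)}\otimes 1\in M(J)$ under an isomorphism $J\cong\K\otimes J$ fails, because these multipliers do not lie in $J$ and cannot form an approximate unit of the ideal, while an honest product approximate unit $h_n^{(0)}\otimes f_n$ introduces uncontrolled ``new mass'' $f_{n+1}-f_n$ that pollutes the gaps. Controlling the spectrum of the gaps while simultaneously maintaining almost idempotency and quasicentrality relative to $\psi(C(X))$ is the delicate point, and it is here that the absorbing property from Theorem~\ref{thm:absorb} together with the $\O_\infty$-stable uniqueness machinery of Proposition~\ref{prop:unitaryequivhomgen} must substitute for the missing trace.
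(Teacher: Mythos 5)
Your first half coincides with the paper's argument: fix a faithful state $\tau$ on $C_0((0,1]\times X)$, apply Proposition~\ref{prop:GT}, and upgrade the trace identity $\tr_\omega(\HM(\Psi_0)(f))=\tau(f)$ to injectivity of $\HM(\Psi_0)$ exactly as you describe. The problem is the transfer to $J$, where you correctly diagnose the obstruction -- $h_n^{(0)}\otimes 1$ does not lie in $\K\otimes J$, and a genuine product approximate unit $h_n^{(0)}\otimes f_n$ corrupts the gaps -- but you do not resolve it. ``Build $(h_n)$ so that the gaps reproduce the spectral behaviour of $h_{n+1}^{(0)}-h_n^{(0)}$, whence injectivity should be inherited'' is a statement of the goal, not a construction, and your fallback (that Theorem~\ref{thm:absorb} and Proposition~\ref{prop:unitaryequivhomgen} ``must substitute for the missing trace'') is neither carried out nor what is needed here: absorption enters only later, in Proposition~\ref{prop:add-trivial} and Proposition~\ref{prop:beta-full}, and plays no role in this corollary.

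The missing idea is to tensor against a \emph{unital} corner rather than against $J$ itself. Since $J$ is Kirchberg it has real rank zero, so it contains a nonzero projection $p$; the corner $J_0=pJp$ is a unital full hereditary subalgebra, and Brown's theorem plus stability give $J\cong J_0\otimes\K$. Now set $\psi(f)=1_{J_0}\otimes\psi_0(f)\in J_0\otimes M(\K)\subseteq M(J_0\otimes\K)$ and $h_n=1_{J_0}\otimes h_n^{(0)}$. Because $1_{J_0}$ is a fixed unit of the first factor and $h_n^{(0)}\in\K$, these elements genuinely lie in $J_0\otimes\K=J$, form an almost idempotent quasicentral approximate unit, and produce no ``new mass'': the gaps are exactly $1_{J_0}\otimes(h_{n+1}^{(0)}-h_n^{(0)})$. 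One then identifies $\Psi$ with $1_{J_0}\otimes\Psi_0$ and hence $\HM(\Psi)$ with $1_{J_0}\otimes\HM(\Psi_0)$, which is injective because $\HM(\Psi_0)$ is; condition~(1) likewise comes for free from Proposition~\ref{prop:GT}(1), so your proposed direct-sum repair for injectivity of $\overline{\psi}$ is unnecessary at this stage (it is essentially the content of the subsequent Proposition~\ref{prop:add-trivial}). Without this corner trick, or some equally concrete replacement, the proof is incomplete.
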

\begin{proof}
Fix a faithful state $\tau$ on $C_0((0,1] \times X)$. Let $\psi_0\colon C(X) \rightarrow M(\K)$ be the unital $^*$-homomorphism $\psi_0\colon C(X) \rightarrow M(\K)$ coming from Proposition~\ref{prop:GT} and let $(h_n^{(0)})_{n=1}^\infty$ be the corresponding almost idempotent quasicentral approximate unit. 
 
 The $^*$-homomorphism $\HM(\Psi_0)$ associated to the order zero map $\Psi_0$ constructed by Proposition~\ref{prop:GT} is injective: if $f \in C_0((0,1]\times X)$ satisfies $\HM(\Psi_0)(f) = 0$, then 
    \begin{equation}
         0 = \tr_\omega(\HM(\Psi_0)(f^*f)) = \tau(f^*f), 
    \end{equation}
    so $f = 0$, as $\tau$ is faithful.

  By, for example,  \cite[Theorem~1]{Zh90}, there exists a non-trivial projection $p \in J$. The corner $J_0 = pJp$ is a unital full hereditary subalgebra and so $J$ and $J_0$ are stably isomorphic by Brown's theorem \cite[Theorem~2.8]{Br77}. Hence, $J_0 \otimes \K \cong J \otimes \K \cong J$; we identify $J$ with $J_0 \otimes \K$.

We define $\psi\colon C(X) \rightarrow M(J)$ by $\psi(f) = 1_{J_0} \otimes \psi_0(f) \in J_0\otimes M(\mathbb{K}) \subseteq M(J_0\otimes \mathbb{K}) = M(J)$ and take $h_n = 1_{J_0} \otimes h_n^{(0)}$. Defining $\Psi$ as in \eqref{eqn:Psi}, we have 
$\Psi(f) = [1_{J_0} \otimes (h_{n+1}^{(0)}-h_n^{(0)})\psi_0(f)]_{n=1}^\infty$, so we can identify $\Psi$ with $1_{J_0} \otimes \Psi_0$. For $f\in C(X)$,
\begin{equation}
  \begin{split}
    (1_{J_0}\otimes \HM(\Psi_0))(\id_{(0,1]}\otimes f)&= (1_{J_0}\otimes \HM(\Psi_0))(\id_{(0,1]}\otimes f)\\&=1_{J_0}\otimes \Psi_0(f)\\&=\Psi(f).
  \end{split}
\end{equation}
So the defining equation \eqref{DefHM} shows that we can identify $\HM(\Psi)$ with $1_{J_0} \otimes \HM(\Psi_0)$, which is injective as $\HM(\Psi_0)$ is injective.
\end{proof}

Next, we take an arbitrary essential extension of $C(X)$ by a stable Kirchberg $\mathrm{C}^*$-algebra and add on the trivial extension that we have just constructed. This proposition is analogous to \cite[Corollary~3.2]{GT22}.

\begin{proposition}\label{prop:add-trivial}
Let $X$ be a compact metric space and $J$ be a stable Kirchberg algebra. Let $\phi\colon C(X) \rightarrow M(J)$ be a c.p.c.\ map such that $\overline{\phi}=q_J \circ \phi\colon C(X) \to Q(J)$ is an injective $^*$-isomorphism.

Then there exist a unital $^*$-homomorphism $\psi\colon C(X) \to M(J)$ and an almost idempotent quasicentral approximate unit $(h_n)_{n=1}^\infty$ for $\M_2(J)$ relative to $\M_2(J) + (\phi \oplus \psi)(C(X))$
such that
\begin{enumerate}
    \item $\overline{\psi}=q_J\circ\psi$ is an injective $^*$-homomorphism, and
    \item  the $^*$-homomorphism $\HM(\Gamma)$ corresponding to the c.p.c. order zero map 
    \begin{equation}
        \begin{split}
	\Gamma\colon C(X) &\rightarrow \prod_{n \to \omega} \overline{(h_{n+1}-h_n)\M_2(J)(h_{n+1}-h_n)}\\
		f	  &\mapsto [(h_{n+1}-h_n)(\phi(f) \oplus \psi(f))]_{n=1}^\infty 
        \end{split}
    \end{equation}
    is injective.
\end{enumerate}
\end{proposition}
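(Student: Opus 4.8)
The plan is to obtain $\psi$ and the approximate unit by a block-diagonal (direct-sum) construction, feeding in the trivial extension produced in Corollary~\ref{cor:trivial-ext} as the second summand. First I would apply Corollary~\ref{cor:trivial-ext} to produce a unital $^*$-homomorphism $\psi\colon C(X)\to M(J)$ with $\overline\psi$ injective, together with an almost idempotent quasicentral approximate unit $(h_n^{\psi})_{n=1}^\infty$ for $J$ relative to $J+\psi(C(X))$ whose associated order zero map $\Psi$ has \emph{injective} $\HM(\Psi)$. Taking this $\psi$ as the homomorphism in the conclusion makes property~(1) immediate, so the entire content is to choose the approximate unit for the direct sum in such a way that the resulting map $\Gamma$ splits off $\Psi$ as a direct summand, thereby inheriting injectivity of $\HM$.

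Next I would handle the given map $\phi$. Since $\overline\phi=q_J\circ\phi$ is a $^*$-homomorphism, the set $J+\phi(C(X))$ equals $q_J^{-1}(\overline\phi(C(X)))$ and is a separable $\mathrm C^*$-algebra containing $J$ as an ideal; by the discussion of quasicentral almost idempotent approximate units in Section~\ref{sec:prelims} I may pick an almost idempotent quasicentral approximate unit $(h_n^{\phi})_{n=1}^\infty$ for $J$ relative to it. I then set
\begin{equation}
h_n=\begin{pmatrix} h_n^{\phi} & 0\\ 0 & h_n^{\psi}\end{pmatrix}\in\M_2(J).
\end{equation}
As each diagonal block is an approximate unit for $J$ and both are almost idempotent, $(h_n)_{n=1}^\infty$ is an almost idempotent approximate unit for $\M_2(J)$. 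Quasicentrality relative to $\M_2(J)+(\phi\oplus\psi)(C(X))$ reduces, on the block-diagonal elements $\phi(f)\oplus\psi(f)$ and on diagonal elements of $\M_2(J)$, to the quasicentrality of $h_n^\phi$ and $h_n^\psi$ separately. With this choice Proposition~\ref{prop:BW} then applies to show $\Gamma$ is a c.p.c.\ order zero map.

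Finally I would read off injectivity of $\HM(\Gamma)$ from the block structure. Since $h_{n+1}-h_n=(h_{n+1}^\phi-h_n^\phi)\oplus(h_{n+1}^\psi-h_n^\psi)$ and $\phi(f)\oplus\psi(f)$ are both block-diagonal, we get $\Gamma(f)=\Gamma^\phi(f)\oplus\Psi(f)$, where $\Gamma^\phi$ is the analogous map for $\phi$ and $\Psi$ is precisely the map from Corollary~\ref{cor:trivial-ext}. Writing $\iota$ for the (injective) inclusion of the block-diagonal corner into the ambient hereditary subalgebra, functoriality of the correspondence $\HM$ under composition with $^*$-homomorphisms and under direct sums gives $\HM(\Gamma)=\iota\circ\bigl(\HM(\Gamma^\phi)\oplus\HM(\Psi)\bigr)$. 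Hence $\HM(\Gamma)(g)=0$ forces $\HM(\Psi)(g)=0$, so $g=0$ by the injectivity built into Corollary~\ref{cor:trivial-ext}; this yields property~(2).

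The one verification requiring genuine care, and the step I would expect a reader to query, is quasicentrality of $(h_n)$ on the \emph{off}-diagonal part of $\M_2(J)$: for $a\in J$ sitting in the $(1,2)$-corner, the $(1,2)$-entry of $[h_n,a]$ is $h_n^{\phi}a-a h_n^{\psi}$, and there is no reason for the two \emph{different} approximate units to commute with $a$ in any matched way. The point is that one does not need them to: since both are approximate units for $J$, one has $h_n^{\phi}a\to a$ and $a h_n^{\psi}\to a$, so the difference tends to $0$. It is worth emphasising that, in contrast to the overall strategy described before the proposition, the absorption theorem (Theorem~\ref{thm:absorb}) is \emph{not} invoked here: injectivity of $\HM(\Gamma)$ comes for free from the direct-sum structure, and absorption enters only later, when the extension $\phi\oplus\psi$ must be identified, up to strong unitary equivalence, with the original essential extension.
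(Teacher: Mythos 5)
Your proposal is correct and follows essentially the same route as the paper: take $\psi$ and $(h_n^{\psi})_{n=1}^\infty$ from Corollary~\ref{cor:trivial-ext}, pick any almost idempotent quasicentral approximate unit $(h_n^{\phi})_{n=1}^\infty$ for $J$ relative to $J+\phi(C(X))$, set $h_n = h_n^{\phi}\oplus h_n^{\psi}$, and deduce injectivity of $\HM(\Gamma)$ from the decomposition $\Gamma = \Phi\oplus\Psi$ and the injectivity of $\HM(\Psi)$. Your explicit check of quasicentrality on the off-diagonal corners (using that both sequences are approximate units for $J$, so $h_n^{\phi}a - ah_n^{\psi}\to 0$) is a detail the paper leaves implicit, and your closing remark that absorption is deferred to Proposition~\ref{prop:beta-full} matches the paper's structure.
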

\begin{proof}
Take $\psi$ and $(h_n^{\psi})_{n=1}^\infty$ to be the $^*$-homomorphism and the almost idempotent quasicentral approximate unit constructed in Corollary~\ref{cor:trivial-ext}, and define $\Psi$ as in \eqref{eqn:Psi} using the approximate unit  $(h_n^{\psi})_{n=1}^\infty$. 
 
Let $(h_n^{\phi})_{n=1}^\infty$ be any almost idempotent quasicentral approximate unit for $J$ relative to $J + \phi(C(X))$ and define $\Phi$ analogously to \eqref{eqn:Psi} with $\phi$ replacing $\psi$ and using the approximate unit $(h_n^{\phi})_{n=1}^\infty$. Note that $\Phi$ is c.p.c. order zero by Proposition~\ref{prop:BW}, as it can be viewed as the $\beta$ map of the Brake--Winter decomposition for the extension $J \rightarrow J + \phi(C(X)) \rightarrow \overline{\phi}(C(X)) \cong C(X)$ with respect to $(h_n^{\phi})_{n=1}^\infty$.  

Set $h_n = h_n^{\phi} \oplus h_n^{\psi}$ for $n \in \N$. Then $(h_n)_{n=1}^\infty$ is an almost idempotent quasicentral approximate unit for $\M_2(J)$ relative to $\M_2(J) + (\phi \oplus \psi)(C(X))$.
By construction, $\Gamma = \Phi \oplus \Psi$. Hence, $\HM(\Gamma) = \HM(\Phi) \oplus \HM(\Psi)$. By Corollary~\ref{cor:trivial-ext}, $\HM(\Psi)$ is injective. Hence, so is $\HM(\Gamma)$. 
% The hereditary subalgebras $\overline{(h_{n+1}-h_n)\M_2(J)(h_{n+1}-h_n)}$ are stably isomorphic to $J$ by Brown's theorem \cite[Theorem~2.8]{Br77}, so are  simple and purely infinite. Hence, the ultraproduct
% \begin{equation}
% \prod_{n \to \omega} \overline{(h_{n+1}-h_n)\M_2(J)(h_{n+1}-h_n)}
% \end{equation}
% is simple and purely infinite by \cite[Remark~2.4]{Kir06}. Therefore, the injective $^*$-homomorphism $\widehat{\Gamma}$ is full. By definition, this means that $\Gamma$ is totally full.
\end{proof}

We can now prove the existence of a suitable Brake--Winter decomposition with $\HM(\beta)$ injective in the setting of Theorem~\ref{thm:main}.

\begin{proposition}\label{prop:beta-full}
Let $J$ be a stable Kirchberg algebra and $X$ a compact metric space. Let 
\begin{equation}\label{eqn:beta-full-1}
 0 \rightarrow J \rightarrow E \xrightarrow{\pi} C(X) \rightarrow 0
\end{equation}
be an essential extension. Then there exist a c.p.c.\ splitting $\mu\colon C(X)\rightarrow E$ and an almost idempotent quasicentral approximate unit $(h_n)_{n=1}^\infty$ for $J$ relative to $E$ such that the c.p.c.\ order zero map 
\begin{equation}
    \begin{split}
    \beta\colon C(X) &\rightarrow \prod_{n\to\omega} \overline{(h_{n+1}-h_n)J(h_{n+1}-h_n)}\\
    f &\mapsto [(h_{n+1}-h_n)\mu(f)]_{n=1}^\infty 
    \end{split}
\end{equation}
corresponds to an injective $^*$-homomorphism $\HM(\beta)$.
\end{proposition}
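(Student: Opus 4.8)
The plan is to manufacture the required data by adding a carefully chosen \emph{trivial} extension to the given one and then absorbing it back using Theorem~\ref{thm:absorb}. Proposition~\ref{prop:add-trivial} already produces, from a suitable c.p.c.\ lift $\phi$ of the Busby invariant, a unital $^*$-homomorphism $\psi$ and an almost idempotent quasicentral approximate unit whose associated $\beta$-type map has injective $\HM$; the point is that the direct-sum extension built from $\phi\oplus\psi$ is strongly unitarily equivalent to the original one, so this good data can be transported back along the resulting isomorphism of extension algebras.

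First I would fix a c.p.c.\ splitting $\mu_0\colon C(X)\to E$ (which exists because $C(X)$ is separable and nuclear) and set $\phi = m_E\circ\mu_0\colon C(X)\to M(J)$. The Busby diagram \eqref{eqn:busby} gives $q_J\circ\phi = \zeta\circ\pi\circ\mu_0 = \zeta$, and since the extension is essential its Busby invariant $\zeta$ is injective; thus $\overline\phi=\zeta$ is an injective $^*$-homomorphism. Feeding $\phi$ into Proposition~\ref{prop:add-trivial} yields a unital $^*$-homomorphism $\psi\colon C(X)\to M(J)$ with $\overline\psi$ injective, an almost idempotent quasicentral approximate unit $(h_n)_{n=1}^\infty$ for $\M_2(J)$ relative to $E_\oplus := \M_2(J)+(\phi\oplus\psi)(C(X))$, and an order zero map $\Gamma$ built from $(h_n)$ and the splitting $\phi\oplus\psi$ with $\HM(\Gamma)$ injective. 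Here $E_\oplus$ is precisely the (full) extension algebra of the direct-sum extension of $C(X)$ by $\M_2(J)$, with Busby invariant $\zeta\oplus\overline\psi$ and c.p.c.\ splitting $\phi\oplus\psi$.

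Next I would invoke absorption. Since the given extension is essential, Theorem~\ref{thm:absorb} says $\zeta$ is absorbing; as $\psi$ is a $^*$-homomorphism, $\overline\psi$ is the Busby invariant of a trivial extension, so $\zeta\oplus\overline\psi$ is strongly unitarily equivalent to $\zeta$ once we identify $\M_2(J)\cong J$ using stability. Writing $\theta$ for this stable isomorphism together with its canonical extensions to the multiplier and corona algebras, and $u\in M(J)$ for the implementing unitary, the map $\Theta = \Ad(u)\circ\theta\colon M(\M_2(J))\to M(J)$ is an isomorphism restricting to an isomorphism $\M_2(J)\to J$, and it descends to the coronas so that $\Ad(q_J(u))\circ\theta\circ(\zeta\oplus\overline\psi) = \zeta$. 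A short computation with the pullback description of extension algebras then shows that $\Theta$ restricts to an isomorphism $E_\oplus\to E$ intertwining the quotient maps onto $C(X)$. I would then \emph{transport} the data: set $\mu = \Theta\circ(\phi\oplus\psi)\colon C(X)\to E$, which is a c.p.c.\ splitting of the given extension, and $g_n = \Theta(h_n)$. Because $\Theta$ is an isomorphism of the full extension algebras carrying $\M_2(J)$ onto $J$, the sequence $(g_n)_{n=1}^\infty$ is an almost idempotent quasicentral approximate unit for $J$ relative to all of $E$, quasicentrality relative to the whole of $E$ being inherited from quasicentrality of $(h_n)$ relative to the whole of $E_\oplus$.

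Finally, multiplicativity of $\Theta$ gives $\Theta((h_{n+1}-h_n)(\phi\oplus\psi)(f)) = (g_{n+1}-g_n)\mu(f)$ for each $f$, so the $\beta$-map of the resulting Brake--Winter decomposition equals $\Theta_\omega\circ\Gamma$, where $\Theta_\omega$ is the isomorphism induced entrywise between the hereditary ultraproducts $\prod_{n\to\omega}\overline{(h_{n+1}-h_n)\M_2(J)(h_{n+1}-h_n)}$ and $\prod_{n\to\omega}\overline{(g_{n+1}-g_n)J(g_{n+1}-g_n)}$. By functoriality of the cone correspondence, $\HM(\beta)=\Theta_\omega\circ\HM(\Gamma)$, which is injective since $\HM(\Gamma)$ is and $\Theta_\omega$ is an isomorphism. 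The main obstacle I anticipate is not any single estimate but the bookkeeping in the transport step: one must realise the abstract strong unitary equivalence supplied by Theorem~\ref{thm:absorb} as a genuine isomorphism of the \emph{full} extension algebras, correctly threading the stable identification $\M_2(J)\cong J$ and the unitary $u$ through the multiplier and corona algebras, and then verify that this isomorphism transports quasicentrality relative to $E_\oplus$ to quasicentrality relative to $E$ while identifying the two $\beta$-maps.
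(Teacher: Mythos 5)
Your proposal is correct and follows essentially the same route as the paper: both lift the Busby invariant to a c.p.c.\ map $\phi$ (your $m_E\circ\mu_0$ versus the paper's direct Choi--Effros lift of $\overline{\phi}$ amount to the same thing), feed it into Proposition~\ref{prop:add-trivial}, invoke Theorem~\ref{thm:absorb} to absorb the trivial summand, and transport the approximate unit and splitting along the resulting isomorphism of extension algebras. The transport bookkeeping you flag as the main obstacle is exactly what the paper handles via the commuting diagram and the footnote describing $\kappa = \lambda\circ\Ad(u)\circ m_E$, and your verification that $\HM(\beta)=\Theta_\omega\circ\HM(\Gamma)$ is injective is sound.
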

\begin{proof}
    Let $\overline{\phi}\colon C(X) \rightarrow Q(J)$ be the Busby invariant of the extension \eqref{eqn:beta-full-1}. 
    As the extension is essential, $\overline{\phi}$ is injective.
    As $C(X)$ is separable and nuclear, the Choi--Effros theorem \cite{ChoiEff76} yields a c.p.c.\ lift $\phi\colon C(X) \rightarrow M(J)$. The extension \eqref{eqn:beta-full-1} is isomorphic to the extension 
    \begin{equation}\label{eqn:beta-full-2}
        0 \rightarrow J \rightarrow J + \phi(C(X))  \rightarrow \overline{\phi}(C(X)) \cong C(X) \rightarrow 0
    \end{equation}
    as both extensions have Busby invariant $\overline{\phi}$.

    Let $\psi\colon C(X) \rightarrow M(J)$ be as in Proposition~\ref{prop:add-trivial}. The extension
    \begin{equation}\label{eqn:beta-full-3}
        0 \rightarrow \M_2(J) \rightarrow \M_2(J) + (\phi \oplus \psi)(C(X))  \rightarrow C(X) \rightarrow 0
    \end{equation}
    has a c.p.c.\ splitting given by the co-restriction of $\phi \oplus \psi$ to the subalgebra $\M_2(J) + (\phi \oplus \psi)(C(X))$.
    Proposition~\ref{prop:add-trivial} shows that there is an almost idempotent quasicentral approximate unit $(h_n)_{n=1}^\infty$ for this extension such that the $^*$-homomorphism $\HM(\beta)$ associated to the c.p.c.\ order zero map $\beta$ in the Brake--Winter decomposition is injective.
    
    Since $\overline{\phi}$ is essential and $\overline{\psi}$ is unitally trivial, Theorem~\ref{thm:absorb} ensures that $\overline{\phi}$ and $\overline{\phi} \oplus \overline{\psi}$ are strongly unitarily equivalent. Hence, there is an isomorphism $\kappa\colon E \rightarrow \M_2(J) + (\phi \oplus \psi)(C(X))$ such that the following diagram commutes:\footnote{Concretely, $\kappa$ has the form $\lambda \circ \Ad(u) \circ m_E$, where $\lambda\colon M(J)\rightarrow \M_2(M(J))$ is a canonical isomorphism (using that $J$ is stable), $m_E\colon E\rightarrow M(J)$ is the canonical embedding (using that $J$ is essential), and $u \in M(J)$ is the unitary implementing the strong unitary equivalence.}
    \begin{equation}
    \begin{tikzcd}
 J \arrow[r, "\iota", ] \arrow[d, "\kappa|_J"] & E \arrow[r, "\pi", ] \arrow[d, "\kappa"]  & C(X) \arrow[d, "\id_{C(X)}"] \\
 \M_2(J) \arrow[r, ""] &  \M_2(J) + (\phi \oplus \psi)(C(X)) \arrow[r, ""]   & C(X).      
\end{tikzcd} 
\end{equation}
Pulling back the almost idempotent quasicentral approximate unit and the c.p.c.\ splitting for the extension \eqref{eqn:beta-full-3} completes the proof.
\end{proof}

\subsection{Approximating the Kirchberg ideal}\label{subsec:dim1}

Recall that the strategy for proving Theorem \ref{thm:main} is to re-use \emph{one} of the colours from the nuclear dimension approximation of $\gamma_n$ to approximate $\beta_n$ (using the notation of Proposition~\ref{prop:BW}). The modified map will now have range in $\mathrm{C}^*(B_n \cup C_n)$, so is no longer orthogonal to $A_n$. This is not a problem if $\dim(X) > 1$, as we can reuse the \emph{other} colours from the approximation for $\gamma_n$ to approximate $\alpha_n$.  

However, when $\dim(X) \leq 1$,  we need to approximate $\alpha_n$ in such a way that one of the two colours used is orthogonal to both $B_n$ and $C_n$. That is the goal of this subsection. For technical reasons, it is easier to work with $\iota_{11} \circ \alpha_n\colon E \rightarrow \M_2(A_n)$ rather than $\alpha_n$.

We begin by using Gabe's uniqueness theorem \cite[Theorem~B]{GabeOinf} and Voiculescu's theorem on the homotopy invariance of quasidiagonality \cite{Vo91} to construct finite-dimensional approximations to certain injective $^*$-homomor\-phisms out of cones.   This is in the spirit of the nuclear dimension approximations for Kirchberg algebras, and more generally nuclear $\mathcal O_\infty$-stable algebras from \cite{BBSTWW,BGSW19}.

\begin{proposition}\label{prop:NDzeroFull*hom}
    Let $D_n$ be a sequence of Kirchberg algebras, and write $D_\omega$ for their ultraproduct. Let $E$ be a separable nuclear $\mathrm{C}^*$-algebra.
    Suppose that $\Phi_n\colon C_0(0,1] \otimes E \rightarrow D_n$ is a uniformly bounded sequence of maps such that the induced map $\Phi\colon  C_0(0,1] \otimes E \rightarrow D_\omega$ is an injective $^*$-homomorphism.
    Then there exist a sequence of c.p.c.\ maps $\eta_n\colon C_0(0,1] \otimes E \rightarrow \M_{r_n}$ and a sequence of $^*$-homomorphisms $\xi_n\colon \M_{r_n} \rightarrow \M_2(D_n)$ such that
    \begin{equation}
        \lim_{n\to\omega} \| \iota_{11} \circ \Phi_n(f) - \xi_n \circ \eta_n (f) \| = 0
    \end{equation}
    for all $f \in C_0(0,1] \otimes E$. 
\end{proposition}
\begin{proof}
   We construct a sequence of maps $\Psi_n\colon C_0(0,1] \otimes E \rightarrow D_n$ that approximately factor through $\M_{r_n}$, then use Gabe's uniqueness theorem to relate the maps $\Phi_n$ to the maps $\Psi_n$.

   The $\mathrm{C}^*$-algebra $C_0(0,1] \otimes E$ is quasidiagonal by \cite{Vo91} and separable since $E$ is separable, so  there exists a sequence of c.p.c.\ maps $\eta_n\colon C_0(0,1] \otimes E \rightarrow \M_{r_n}$ which are approximately multiplicative and approximately isometric. The induced map $\eta\colon C_0(0,1] \otimes E \rightarrow \prod_{n\to\omega} \M_{r_n}$ is thus an isometric $^*$-homomorphism.

   As each $D_n$ is Kirchberg, there exist embeddings $\xi_n'\colon \M_{r_n} \rightarrow D_n$ by Proposition~\ref{prop:embeddings}(i). Set $\Psi_n = \xi_n' \circ \eta_n\colon C_0(0,1] \otimes E \rightarrow D_n$ for each $n \in \N$. Since the $\xi_n'$ are isometric and the $\eta_n$ are approximately isometric, the induced map $\Psi\colon C_0(0,1] \otimes E \rightarrow D_\omega$ is an injective $^*$-homomorphism. 

   Since both $\Phi$ and $\Psi$ satisfy the hypotheses of Proposition~\ref{prop:unitaryequivhomgen}, there exists a unitary $u \in \M_2(D_\omega)^\sim$ such that $\Ad(u) \circ \iota_{11} \circ \Psi = \iota_{11} \circ \Phi$. 

   We can lift the unitary $u \in \M_2(D_\omega)^\sim$ to a representative sequence of unitaries $(u_n)_{n=1}^\infty$, where $u_n \in \M_2(D_n)^\sim$. 
   Define $^*$-homomorphisms $\xi_n = \Ad(u_n) \circ \iota_{11} \circ \xi_n' \colon \M_{r_n} \rightarrow \M_2(D_n)$.
   
   Now fix $f \in C_0(0,1] \otimes E$. Then  $(\xi_n(\eta_n(f)))_{n=1}^\infty$ and $(\iota_{11}(\Phi_n(f)))_{n=1}^\infty$ are representative sequences for $\Ad(u) \circ \iota_{11} \circ \Psi(f)$ and $\iota_{11} \circ \Phi (f)$, respectively. Since $\Ad(u) \circ \iota_{11} \circ \Psi(f) = \iota_{11} \circ \Phi (f)$, we have
      \begin{equation}
        \lim_{n\to\omega} \| \iota_{11} \circ \Phi_n(f) -\xi_n \circ \eta_n (f) \| = 0. \qedhere
    \end{equation} 
\end{proof}

We now construct the desired two-colour approximations to the maps $\iota_{11} \circ \alpha_n$. The idea, whose roots are in \cite{BBSTWW}, is to split $\alpha\colon E \rightarrow \prod_{n\to\omega} A_n$ into two pieces using a positive contraction $k$ with spectrum $[0,1]$ that commutes with the image of $\alpha$. We then approximate each piece using Proposition~\ref{prop:NDzeroFull*hom}. 

\begin{proposition}\label{prop:Approximate-Alpha}
    Let $E$ be a separable nuclear $\mathrm{C}^*$-algebra and let $J \triangleleft E$ be an essential ideal. Let $(h_n)_{n=1}^\infty$ be a quasicentral approximate unit for $J$ relative to $E$. Let $\alpha_n\colon E \rightarrow J$ be the approximately order zero c.p.c.\ map given by $a \mapsto h_n^{1/2}ah_n^{1/2}$.

    Suppose that $J$ is a Kirchberg algebra. Then there exist c.p.c.\ maps
    \begin{equation}
    \begin{split}
        \eta_n^{(0)}&\colon E \rightarrow \M_{r_n^{(0)}}, \quad
        \xi_n^{(0)}\colon \M_{r_n^{(0)}} \rightarrow \M_2(\overline{h_{n-1}Jh_{n-1}}),\\
        \eta_n^{(1)}&\colon E \rightarrow \M_{r_n^{(1)}}, \quad 
        \xi_n^{(1)}\colon \M_{r_n^{(1)}} \rightarrow \M_2(\overline{h_{n}Jh_{n}}),
    \end{split}
    \end{equation}
    such that $\xi_n^{(0)}$ and $\xi_n^{(1)}$ are $^*$-homomorphisms and 
    \begin{equation}\label{eq:approximates alpha}
        \lim_{n\to\omega} \big\| \iota_{11} \circ \alpha_n(a) - \big(\xi^{(0)}_n \circ \eta^{(0)}_n (a) + \xi^{(1)}_n \circ \eta^{(1)}_n (a)\big)\big\| = 0
    \end{equation}
    for all $a \in E$.
\end{proposition}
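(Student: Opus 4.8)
The plan is to build the two colours by splitting the injective c.p.c.\ order zero map $\alpha\colon E \to \prod_{n\to\omega} A_n$ (where $A_n = \overline{h_nJh_n}$) into two order zero pieces that sum to $\alpha$, route one into the smaller ultraproduct $\prod_{n\to\omega}\overline{h_{n-1}Jh_{n-1}}$ and the other into $\prod_{n\to\omega}A_n$, and then feed each piece into Proposition~\ref{prop:NDzeroFull*hom}. First I would record that $\alpha$ is injective: writing $h=[h_n]_n$, so that $\alpha(a)=ha$, if $a\in E_+$ satisfies $ha=0$ then $ax=a(hx)=(ah)x=0$ for every $x\in J$ (using $hx=x$ in $E_\omega$ for $x\in J$), so $a$ annihilates the essential ideal $J$ and hence $a=0$.

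The split comes from a commuting interval. I would produce a positive contraction $k=[k_n]_n$ with each $k_n\in\overline{h_{n-1}Jh_{n-1}}$, of full spectrum $[0,1]$, commuting with $\alpha(E)$ and with $h$, and \emph{full} relative to $\alpha(E)$ in the sense that $f(k)\alpha(a)\neq 0$ whenever $f\neq 0$ on $[0,1]$ and $\alpha(a)\neq 0$. Granting such a $k$, set $\alpha^{(0)}(a)=k^{1/2}\alpha(a)k^{1/2}$ and $\alpha^{(1)}(a)=(1-k)^{1/2}\alpha(a)(1-k)^{1/2}$; since $k$ commutes with $\alpha(a)$ these equal $k\alpha(a)$ and $(1-k)\alpha(a)$, they are c.p.c.\ order zero, and $\alpha^{(0)}+\alpha^{(1)}=\alpha$. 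Because $k_n\in\overline{h_{n-1}Jh_{n-1}}$ is hereditary in $J$ and $\alpha(a)\in J$, heredity gives $\alpha^{(0)}(a)\in\prod_{n\to\omega}\overline{h_{n-1}Jh_{n-1}}$; assuming, as we may following Proposition~\ref{prop:BW}, that $(h_n)_n$ is almost idempotent so that $\overline{h_{n-1}Jh_{n-1}}\subseteq A_n$, the complementary piece $\alpha^{(1)}$ lands in $\prod_{n\to\omega}A_n$. Fullness and full spectrum of $k$ ensure that both $\alpha^{(i)}$ remain injective and that the associated positive elements $kh$ and $(1-k)h$ have full spectrum $[0,1]$, so that $\HM(\alpha^{(0)})$ and $\HM(\alpha^{(1)})$ are injective $^*$-homomorphisms out of the cone $C_0(0,1]\otimes E$.

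With the split in hand I would apply Proposition~\ref{prop:NDzeroFull*hom} twice, via representative sequences: to $\HM(\alpha^{(0)})$ with target algebras $D_n=\overline{h_{n-1}Jh_{n-1}}$, and to $\HM(\alpha^{(1)})$ with $D_n=A_n$. This yields c.p.c.\ maps $\eta_n^{(i)}\colon C_0(0,1]\otimes E\to\M_{r_n^{(i)}}$ and $^*$-homomorphisms $\xi_n^{(i)}\colon\M_{r_n^{(i)}}\to\M_2(D_n)$ whose composites approximate $\iota_{11}\circ\alpha^{(i)}$ in the $\lim_{n\to\omega}$ sense. Evaluating the cone maps at $\id_{(0,1]}\otimes a$ and redefining $\eta_n^{(i)}$ on $E$ by precomposition with the c.p.c.\ map $a\mapsto\id_{(0,1]}\otimes a$, the ranges of $\xi_n^{(0)}$ and $\xi_n^{(1)}$ sit in $\M_2(\overline{h_{n-1}Jh_{n-1}})$ and $\M_2(\overline{h_nJh_n})$ as demanded. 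The triangle inequality together with $\alpha^{(0)}+\alpha^{(1)}=\alpha$ (so that $\lim_{n\to\omega}\|\alpha^{(0)}_n(a)+\alpha^{(1)}_n(a)-\alpha_n(a)\|=0$ for any choice of representatives) then delivers \eqref{eq:approximates alpha}.

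The main obstacle is the construction of $k$. I need a full-spectrum positive contraction that lives in the \emph{smaller} ultraproduct $\prod_{n\to\omega}\overline{h_{n-1}Jh_{n-1}}$, yet commutes with all of $\alpha(E)$ (whose representatives live in the larger $A_n$ and which is not contained in the smaller ultraproduct), and is full relative to $\alpha(E)$, so that multiplication by $k$ and by $1-k$ destroys neither the injectivity of the order zero pieces nor the fullness of spectrum needed for $\HM(\alpha^{(i)})$ to be injective. This balances two competing demands — shrinking the support into $\overline{h_{n-1}Jh_{n-1}}$ against keeping the cone $^*$-homomorphisms injective — and is precisely where the pure infiniteness and $\O_\infty$-stability of the hereditary subalgebras $\overline{h_{n-1}Jh_{n-1}}$ enter, through the largeness of relative commutants in ultraproducts of Kirchberg algebras and a reindexing argument in the spirit of \cite{BBSTWW, BGSW19}.
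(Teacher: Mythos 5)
Your overall architecture matches the paper's: split $\alpha$ into two order zero summands by means of a commuting positive contraction of full spectrum, check that the associated cone homomorphisms $\HM(\alpha^{(i)})$ are injective, and feed both pieces into Proposition~\ref{prop:NDzeroFull*hom}. But the construction of the splitting element $k$, which you yourself flag as ``the main obstacle'', is the entire content of the proof, and you leave it unresolved. Moreover, the properties you demand of $k$ --- that it lie in $\prod_{n\to\omega}\overline{h_{n-1}Jh_{n-1}}$, commute with $\alpha(E)$, have spectrum $[0,1]$, \emph{and} satisfy $f(k)\alpha(a)\neq 0$ for every nonzero $f\in C_0(0,1]$ and every $a$ with $\alpha(a)\neq 0$ --- are stronger than what is needed and are not obviously achievable: two commuting positive elements of a simple purely infinite algebra can perfectly well satisfy $f(k)a=0$ for some nonzero $f$ and $a$, so neither commutation nor full spectrum delivers the ``fullness'' on which the injectivity of $\HM(\alpha^{(0)})$ and $\HM(\alpha^{(1)})$ rests. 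That fullness genuinely requires a tensor-product structure which your sketch does not produce.

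The paper circumvents exactly this difficulty by decoupling the two jobs you assign to $k$. The splitting element is taken in the \emph{multiplier} ultrapower: $\O_\infty$-stability of $J$ gives a unital embedding $\O_\infty\to M(J)_\omega\cap J'$ (\cite[Theorem~7.2.6]{Rordam2002}), upgraded by Kirchberg's $\epsilon$-test to a unital embedding $\iota_{\O_\infty}\colon\O_\infty\to M(J)_\omega\cap S'$, where $S$ is the separable subalgebra generated by $J\cup\bar{h}E\cup hE\cup\{h,\bar{h}\}$; then $k=\iota_{\O_\infty}(k')$ for a full-spectrum positive contraction $k'\in\O_\infty$. The localisation into $\overline{h_{n-1}Jh_{n-1}}$ is achieved not by $k$ but by the separate factor $\bar{h}=[h_{n-1}]_{n=1}^\infty$: the decomposition is $\alpha^{(0)}(a)=k\bar{h}a$ and $\alpha^{(1)}(a)=(1-k)\bar{h}a+(h-\bar{h})a$, the extra term ensuring the sum is $ha=\alpha(a)$. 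Injectivity of $\HM(\alpha^{(i)})$ is then reduced, via essentiality of $J$ in $E$ (hence of $C_0(0,1]\otimes J$ in $C_0(0,1]\otimes E$), to injectivity on $C_0(0,1]\otimes J$, where $\alpha^{(0)}(a)=ka$ and $\HM(\alpha^{(0)})(f\otimes a)=\iota_{\O_\infty\otimes S}(f(k')\otimes a)$; this is injective by injectivity of the minimal tensor product together with injectivity of the induced map $\O_\infty\otimes S\to J_\omega$ --- which is where the fullness you postulated actually comes from. Without this construction, or an equivalent one, your argument does not go through.
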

\begin{proof}
Write $h = [h_n]_{n=1}^\infty \in J_\omega$ and $\bar{h} = [h_{n-1}]_{n=1}^\infty \in J_\omega$. Since $(h_n)_{n=1}^\infty$ is quasicentral, the c.p.c.\ map $\alpha\colon E \rightarrow J_\omega$ induced by the sequence $(\alpha_n)_{n=1}^\infty$ is order zero and satisfies $\alpha(a) = ha$ for all $a \in E$. 

Since $J$ is $\O_\infty$-stable, there is a unital embedding $\O_\infty \rightarrow M(J)_\omega \cap J'$ by \cite[Theorem~7.2.6]{Rordam2002}.
Let $S$ be the separable $\mathrm{C}^*$-subalgebra of $J_\omega$ generated by $J \cup \bar{h}E \cup hE \cup \{h, \bar{h}\}$.   
Since $S \subseteq  J_\omega$ is separable, there is a unital embedding $\iota_{\O_\infty}\colon \O_\infty \rightarrow M(J)_\omega \cap S'$ by Kirchberg's $\epsilon$-test (\cite[Lemma~A.4]{Kir06}). 
Let  $\iota_{\O_\infty \otimes S}\colon  \O_\infty \otimes S \rightarrow J_\omega$ be the  $^*$-homomorphism given by $t \otimes s \mapsto  \iota_{\O_\infty}(t)s$. Then $\iota_{\O_\infty \otimes S}$ is injective because every ideal of $\O_\infty \otimes S$ is of the form $\O_\infty \otimes I$ for some $I \triangleleft S$ and $\iota_{\O_\infty \otimes S}(1 \otimes s) = s$ for all $s \in S$.

Let $k' \in \O_\infty$ be a positive contraction with spectrum $[0,1]$ and set $k = \iota_{\O_\infty}(k')$. Then $k$ has spectrum $[0,1]$ as $\iota_{\O_\infty}$ is a unital embedding.
We decompose the map $\alpha\colon E \rightarrow J_\omega$ as $\alpha = \alpha^{(0)} + \alpha^{(1)}$, where $\alpha^{(0)},\alpha^{(1)}\colon E \rightarrow J_\omega$ are given by
\begin{align}
	\alpha^{(0)}(a) &= k\bar{h}a,\text{ and} \label{eqn:oz1}\\
	\alpha^{(1)}(a) &= (1-k)\bar{h}a + (h-\bar{h})a\label{eqn:alpha1}
\end{align}
for $a \in E$.
%\footnote{Note that $J_\omega$ is typically not unital, so the occurrence of 1 in \eqref{eqn:alpha1} should be taken to live in the unitisation.}

Let $a \in E$. Since $k$ commutes with both $\bar{h}$ and $\bar{h}a$, and $\bar{h}$ commutes with $a$ as $(h_n)_{n=1}^\infty$ is quasicentral,
\begin{equation}
    k\bar{h}a = \bar{h}a k = a\bar{h}k = ak\bar{h}.
\end{equation}
Since $k$ and $\bar{h}$ commute, it follows that
\begin{align}
    \alpha^{(0)}(a) &= ak\bar{h} \label{eqn:oz2}\\
                    &= \bar{h}^{1/2}k^{1/2}ak^{1/2}\bar{h}^{1/2}. \label{eqn:cpc1}
\end{align}
Using \eqref{eqn:cpc1}, we see that $\alpha^{(0)}$ is a c.p.c.\ map with image in the hereditary subalgebra $\prod_{n\to\omega} \overline{h_{n-1}Jh_{n-1}}$. By \eqref{eqn:oz1}~and~\eqref{eqn:oz2}, if $a,b \in E$ satisfy $ab=0$, then $\alpha^{(0)}(a)\alpha^{(0)}(b)=0$. Hence, $\alpha^{(0)}$ is c.p.c.\ order zero.

Similarly, $\alpha^{(1)}$ is a c.p.c.\ order zero map with image in the hereditary subalgebra $\prod_{n\to\omega} \overline{h_{n}Jh_{n}}$. The equations corresponding to \eqref{eqn:oz2}~and~\eqref{eqn:cpc1} in this case are 
\begin{align}
    \alpha^{(1)}(a) &= a(1-k)\bar{h} + a(h-\bar{h}) \\
                    &= \bar{h}^{1/2}(1-k)^{1/2}a(1-k)^{1/2}\bar{h}^{1/2} + (h-\bar{h})^{1/2}a(h-\bar{h})^{1/2}.
\end{align}

We claim that $\HM(\alpha^{(0)})$ is injective. 
Since $J$ is an essential ideal of $E$, the ideal $C_0(0,1] \otimes J$ is essential in $C_0(0,1] \otimes E$.\footnote{Suppose $f \in C_0(0,1] \otimes E$ is non-zero. Then $f(t_0) \neq 0$ for some $t_0 \in (0,1]$, to there is $a \in J$ with $f(t_0)a \neq 0$. Take $g(t) = ta$. Then $g \in  C_0(0,1] \otimes J$ and $fg \neq 0$.}
Hence, it suffices to show that $\HM(\alpha^{(0)})|_{C_0(0,1] \otimes J}$ is injective.\footnote{If $\mathrm{Ker}(\HM(\alpha^{(0)})) \not= \{0\}$, then $\mathrm{Ker}(\HM(\alpha^{(0)})) \cap (C_0(0,1] \otimes J) \not= \{0\}$.}
Since $(h_{n-1})_{n=1}^\infty$ is an approximate unit for $J$, we have $\alpha^{(0)}(a) = ka$ for all $a \in J$. Hence, $\HM(\alpha^{(0)})(f \otimes a) = f(k)a$ for all $f \in C_0(0,1]$ and $a \in J$ (as the right hand side defines a $^*$-homomorphism satisfying \eqref{DefHM}). Therefore, $\HM(\alpha^{(0)})(f \otimes a) =\iota_{\O_\infty \otimes S}(f(k') \otimes a)$ for all $f \in C_0(0,1]$ and $a \in J$. The map $C[0,1] \rightarrow \O_\infty$ given by  $f \mapsto f(k')$ is injective as $k'$ has spectrum $[0,1]$. Hence, by injectivity of the minimal tensor product, the map $C[0,1] \otimes S\to \O_\infty\otimes S$ given by $f \otimes a \mapsto f(k') \otimes a$ is injective.
%The map $f \otimes a \mapsto f(k') \otimes a$ is injective by Lemma~\ref{lem:full-spectrum}, as $k'$ has full spectrum.\footnote{Indeed, the map $\sigma_k\colon  C[0,1] \rightarrow \O_\infty$ given by  $f \mapsto f(k')$ is injective as $k'$ has spectrum [0,1]} 
Since $\iota_{\O_\infty \otimes S}$ is injective, $\HM(\alpha^{(0)})|_{C_0(0,1] \otimes J}$ is injective, proving the claim.

A similar argument shows that $\HM(\alpha^{(1)})$ is injective. Indeed, it suffices to show that $\HM(\alpha^{(1)})|_{C_0(0,1] \otimes J}$ is injective. Since  $(h_{n})_{n=1}^\infty$ and  $(h_{n-1})_{n=1}^\infty$ are quasicentral approximate units for $J$ relative to $E$, we have $\alpha^{(1)}(a) = (1-k)a$ for all $a \in J$. Hence, $\HM(\alpha^{(1)})(f \otimes a) =\iota_{\O_\infty \otimes S}(f(1-k') \otimes a)$ for all $f \in C_0(0,1]$ and $a \in J$. As $1-k'$ is a positive contraction with full spectrum,  $\HM(\alpha^{(1)})|_{C_0(0,1] \otimes J}$ is injective. Therefore, $\HM(\alpha^{(1)})$ is injective.

Since $C_0(0,1] \otimes E$ is separable and nuclear, the Choi--Effros theorem \cite{ChoiEff76} implies that we can lift $\HM(\alpha^{(0)})$ to a sequence of c.p.c.\ maps $\widehat{\alpha}_n^{(0)}\colon C_0(0,1] \otimes E \rightarrow \overline{h_{n-1}Jh_{n-1}}$ and that we can lift $\HM(\alpha^{(1)})$ to a sequence of c.p.c.\ maps $\widehat{\alpha}_n^{(1)}\colon C_0(0,1] \otimes E \rightarrow \overline{h_nJh_n}$. %Since $\underline{\alpha}^{(0)}$ and $\underline{\alpha}_n^{(1)}$ are $^*$-homomorphisms the sequences $\underline{\alpha}_n^{(0)}$ and $\underline{\alpha}_n^{(1)}$ are approximately multiplicative.

Applying Proposition~\ref{prop:NDzeroFull*hom} twice, we obtain c.p.c.\ maps 
\begin{equation}
    \begin{split}  
        \widehat{\eta}_n^{(0)}&\colon C_0(0,1] \otimes E \rightarrow \M_{r_n^{(0)}}, \quad
        \xi_n^{(0)}\colon \M_{r_n^{(0)}} \rightarrow \M_2(\overline{h_{n-1}Jh_{n-1}}),\\
        \widehat{\eta}_n^{(1)}&\colon C_0(0,1] \otimes E \rightarrow \M_{r_n^{(1)}}, \quad
        \xi_n^{(1)}\colon \M_{r_n^{(1)}} \rightarrow \M_2(\overline{h_{n}Jh_{n}})
    \end{split}
\end{equation}
such that $\xi_n^{(0)}$ and $\xi_n^{(1)}$ are $^*$-homomorphisms and
\begin{align}
     \lim_{n\to\omega} \| \iota_{11} \circ \widehat{\alpha}_n^{(i)}(f) - \xi_n^{(i)} \circ \widehat{\eta}^{(i)}_n(f) \| = 0 \label{eqn:almost-there}
\end{align}
for $i \in \{0,1\}$ and $f \in C_0(0,1] \otimes E$. Define $\eta_n^{(i)}\colon E \rightarrow \M_{r_n^{(i)}}$ by $\eta_n^{(i)}(a) = \widehat{\eta}_n^{(i)}(\id_{(0,1]} \otimes a)$ for $i \in \{0,1\}$ and $a \in E$. 

Let $a \in E$. Since
\begin{equation}
    \begin{split}   
    \alpha(a) &= \alpha^{(0)}(a) + \alpha^{(1)}(a)\\ &= \HM(\alpha^{(0)})(\id_{(0,1]} \otimes a) + \HM(\alpha^{(1)})(\id_{(0,1]} \otimes a),
    \end{split}
\end{equation}
it follows from \eqref{eqn:almost-there} that
   \begin{equation}
       \lim_{n\to\omega} \| \iota_{11} \circ \alpha_n(a) - \xi^{(0)}_n \circ \eta^{(0)}_n (a) - \xi^{(1)}_n \circ \eta^{(1)}_n (a)\| = 0.\qedhere
   \end{equation}
\end{proof}

\section{Proof of Theorem~\ref{thm:main}}\label{sec:mainthm}

With the technical machinery in place, we are now ready to proceed with the proof of the main theorem as outlined in Section \ref{sec:outline}.

\begin{proof}[Proof of Theorem~\ref{thm:main}]
%Let $J$ be a stable Kirchberg algebra and $X$ a compact metric space. Let 
%\begin{equation}
% 0 \rightarrow J \rightarrow E \xrightarrow{\pi} C(X) \rightarrow 0
%\end{equation}
%be an essential extension. 
By \cite[Proposition~2.3]{WZ10}, we have 
\begin{equation}
    \dimnuc(E) \geq \max(\dimnuc(J), \dimnuc(C(X))).
\end{equation}
By \cite[Proposition~2.4]{WZ10}, we have $\dimnuc(C(X)) = \dim(X)$. 
As $J$ is Kirchberg, $\dimnuc(J) = 1$ by \cite[Theorem~G]{BBSTWW}. Hence, $\dimnuc(E) \geq \max(1,\dim(X))$. It remains to prove the reverse inequality.

By Proposition~\ref{prop:beta-full}, there exist a c.p.c.\ splitting $\mu\colon C(X) \rightarrow E$ and an almost idempotent quasicentral approximate unit $(h_n)_{n=1}^\infty$ such that the corresponding Brake--Winter decomposition (see Proposition~\ref{prop:BW}) has the property that $\HM(\beta)$ is injective. 

Resume the notation established in Proposition~\ref{prop:BW}.
Define the three families of hereditary subalgebras $A_n = \overline{h_nJh_n}$, 
$B_n = \overline{(h_{n+1} - h_n)J(h_{n+1} - h_n)}$ 
and $C_n = \overline{(1 - h_{n+1})E(1 - h_{n+1})}$,
and sequences of c.p.c.\ maps 
\begin{equation}
    \begin{split}
\alpha_n\colon  E \rightarrow A_n,\quad  a &\mapsto h_n^{1/2} a h_n^{1/2},\\
\beta_n\colon   C(X)\rightarrow B_n,\quad f &\mapsto (h_{n+1} - h_n)^{1/2}\mu(f)(h_{n+1} - h_n)^{1/2},\\
\gamma_n\colon  C(X) \rightarrow C_n,\quad f &\mapsto (1-h_{n+1})^{1/2} \mu(f) (1-h_{n+1})^{1/2}.
    \end{split}
\end{equation}
Let $\alpha\colon E \rightarrow \prod_{n\to\omega} A_n \subseteq E_\omega$, $\beta\colon C(X) \rightarrow \prod_{n\to\omega} B_n \subseteq E_\omega$ and $\gamma\colon C(X) \rightarrow \prod_{n\to\omega} C_n \subseteq E_\omega$ be the induced c.p.c.\ maps into the ultraproducts. Proposition~\ref{prop:BW} ensures that $\alpha$, $\beta$ and $\gamma$ are c.p.c.\ order zero maps, and we have 
    \begin{equation}\label{eqn:BW-approx}
	\alpha(a) + \beta (\pi(a)) + \gamma(\pi(a)) = a
\end{equation}
for all $a \in E$. Write $h = [h_n]_{n=1}^\infty \in E_\omega$ and $\hat{h} = [h_{n+1}]_{n=1}^\infty \in E_\omega$.

Let $d = \dim(X)$. There's nothing to prove when $d$ is infinite, so we may assume $d$ is finite. Fix $k \in \N$.
By \cite[Lemma~4.1]{GT22}, there exist a finite open cover $\mathcal{U}_k = \{U^{(i)}_{k,j}\colon  i=0,\ldots,d; j=1,\ldots,r(k,i)\}$ of $X$ and a
finite Borel partition $\mathcal{Y}_k = \{Y_{k,j}\colon  j=1,\ldots,r(k,0)\}$ of $X$ such that:
\begin{enumerate}[(i)]
  \item $U^{(i)}_{k,j} \cap U^{(i)}_{k',j'} = \emptyset$ whenever $j \neq j'$ and for all $r=0,\dots, d$;\label{MainThm.item1}
  \item each $U^{(i)}_{k,j}$ and each $Y_{k,j}$ has diameter at most $2^{-k}$;\label{MainThm.item2}
  \item $U_{k,j}^{(0)} \subseteq Y_{k,j}$ for $j=1,\ldots,r(k,0)$; and\label{MainThm.item3}
  \item $U^{(i)}_{k,j} \neq \emptyset$ for $i \neq 0$.\label{MainThm.item4}
\end{enumerate}

Let $(g^{(i)}_{k,j})_{i,j}$ be a continuous partition of unity for $C(X)$ subordinate to $\mathcal{U}_k$. Note, if $U^{(0)}_{k,j} = \emptyset$ then $g^{(i)}_{k,j} = 0$.
For each triple $(i,j,k)$ choose a point $t^{(i)}_{k,j} \in U^{(i)}_{k,j}$ unless $U^{(0)}_{k,j} = \emptyset$ in which case chose $t^{(0)}_{k,j} \in Y_{k,j}$ 

Define $\psi_k = (\psi^{(i)}_{k})_{i=0}^d\colon C(X) \rightarrow \C^{r(k,0)} \oplus \cdots \oplus \C^{r(k,d)} $ by
\begin{equation}
	\psi^{(i)}_{k}(f) = (f(t^{(i)}_{k,1}), \ldots, f(t^{(i)}_{k,r(k,i)})). 
\end{equation} 
Define maps $\phi^{(i)}_{k}\colon \C^{r(k,i)} \rightarrow C(X)$ by
\begin{equation}
	\phi^{(i)}_{k}(\lambda^{(i)}_{1},\ldots,\lambda^{(i)}_{r(k,i)}) = \sum_{j=1}^{r(k,i)}\lambda^{(i)}_j g^{(i)}_{k,j}
\end{equation} 
and let $\phi_k\colon \mathbb C^{r(k,0)}\oplus\cdots\oplus \mathbb C^{r(k,d)}\to C(X)$ be the sum of these maps:
\begin{equation}
\phi_k((\lambda^{(i)}_j)_{j=1}^{r(k,0)},\cdots,(\lambda^{(i)}_j)_{j=1}^{r(k,d)})=\sum_{i=0}^d\phi_k^{(i)}(\lambda_1^{(i)},\dots,\lambda_{r(k,i)}^{(i)}).
\end{equation}
Let $B(X)$ be the $\mathrm{C}^*$-algebra of bounded Borel functions $X \rightarrow \C$. Define $\rho_k \colon \C^{r(k,0)} \rightarrow B(X)$ by
\begin{equation}
	\rho_{k}(\lambda^{(0)}_{1},\ldots,\lambda^{(0)}_{r(k,0)}) = \sum_{j=1}^{r(k,0)}\lambda^{(0)}_j \chi_{Y_{k,j}}.
\end{equation}
Note that $\psi_k$ and $\rho_k$ are $^*$-homomorphisms and (from the disjointness condition in \eqref{MainThm.item1}) each $\phi^{(i)}_{k}$ is a c.p.c.\ order zero map.
Since continuous functions on $X$ are uniformly continuous, condition~\eqref{MainThm.item2} gives
\begin{equation}\label{eqn:C(X)-approx}
 \lim_{k\to\infty} \phi_k \circ \psi_k(f) = f = \lim_{k\to\infty} \rho_k \circ \psi^{(0)}_k(f) \quad \text{ for all $f \in C(X)$.}
\end{equation}

Let $D \subseteq B(X)$ be the separable subalgebra generated by   
$C(X)$ together with the characteristic functions $\chi_{Y_{k,j}}$ for all $k \in \N$ and $j=1,\ldots,r(k,0)$.  By construction, the image of $\rho_k$ is contained in $D$ for all $k \in \N$.

Abusing notation slightly, we write $\iota_{11}$ for the $(1,1)$-corner embedding $\iota_{11}\colon E \rightarrow \M_2(E)$ and the induced map $E_\omega \rightarrow \M_2(E_\omega)$ as well as its restrictions to maps
$A_\omega \rightarrow \M_2(A_\omega)$, $B_\omega \rightarrow \M_2(B_\omega)$, and $C_\omega \rightarrow \M_2(C_\omega)$. 

Since $\HM(\beta)$ is injective, $\iota_{11} \circ \beta\colon C(X) \rightarrow \M_2(B_\omega)$ can be extended to a c.p.c.\ order zero map $\hat{\beta}\colon D \rightarrow \M_2(B_\omega)$ by Proposition~\ref{prop:extend}. 
We claim that $\hat{\beta} \circ \rho_k + \iota_{11} \circ \gamma \circ \phi^{(0)}_k$ is c.p.c.\ and order zero. Indeed, let  $e_j$,$e_{j'} \in \C^{r(k,0)}$ be distinct elements of the standard basis, and let $(\sigma_m)_{m=1}^\infty$ be a sequence of functions in $C(X)$ supported on $U^{(0)}_{k,j'}$ with $\lim_{m\to\infty} \|\sigma_mg^{(0)}_{k,j'} - g^{(0)}_{k,j'}\| = 0$. Since $\operatorname{supp}(\sigma_m) \cap Y_{k,j} = \emptyset$ (by condition \eqref{MainThm.item3}), we have $\chi_{Y_{k,j}} \leq 1-\sigma_m$. Hence,  
\begin{equation}\label{eqn:oz-test-1}
    \begin{split}
 \|(\hat{\beta} \circ \rho_k)(e_j)(\iota_{11} \circ \gamma \circ \phi^{(0)}_k)(e_{j'})\| &= \|\hat{\beta}(\chi_{Y_{k,j}})\gamma(g^{(0)}_{k,j'})\|\\
 &\leq  \|\beta(1-\sigma_m)\gamma(g^{(0)}_{k,j'})\|.
    \end{split}
\end{equation}
Using repeatedly that $h,\hat{h} \in E_\omega \cap E'$ and using, in the last line, that $\{\mu(f_1)\mu(f_2) - \mu(f_1f_2) : f_1, f_2 \in C(X)\} \subseteq J$, so is annihilated by $(\hat{h}-h)$, we compute: 
\begin{equation}\label{eqn:oz-test-2}
    \begin{split}
  \beta(1-\sigma_m)\gamma(g^{(0)}_{k,j'}) &= (\hat{h}-h)\mu(1-\sigma_m)(1-\hat{h})\mu(g^{(0)}_{k,j'})\\
 &= (\hat{h}-h)\mu(1-\sigma_m)\mu(g^{(0)}_{k,j'})(1-\hat{h})\\
 &= (\hat{h}-h)\mu((1-\sigma_m)g^{(0)}_{k,j'})(1-\hat{h}).
    \end{split}
\end{equation}
Since $(1-\sigma_m)g^{(0)}_{k,j'} \to 0$ as $m \to \infty$, it follows from \eqref{eqn:oz-test-1} and \eqref{eqn:oz-test-2} that 
\begin{equation}
    (\hat{\beta} \circ \rho_k)(e_j)(\iota_{11} \circ \gamma \circ \phi^{(0)}_k)(e_{j'}) = 0.
\end{equation}
Hence, $\hat{\beta} \circ \rho_k + \iota_{11} \circ \gamma \circ \phi^{(0)}_k$ is c.p.c.\ order zero. 
%from which we can conclude that the sum of the c.p.c.\ order zero maps $\hat{\beta} \circ \rho_k$ and $\iota_{11} \circ \gamma \circ \phi^{(0)}_k$ is also order zero.

The image of $\hat{\beta} \circ \rho_k + \iota_{11} \circ \gamma \circ \phi^{(0)}_k$ is contained in the hereditary subalgebra $\prod_{n\to\omega}\M_2(\overline{(1-h_n)E(1-h_n)})$
of $\M_2(E_\omega)$. Since the domain $\C^{r(k,0)}$ of the order-zero map $\hat{\beta} \circ \rho_k + \iota_{11} \circ \gamma \circ \phi^{(0)}_k$ is a finite-dimensional abelian $\mathrm{C}^*$-algebra, \cite[Lemma~4.6]{Lo93} (which reinterprets \cite[Lemma~2.6]{AkPed77})\footnote{An explicit statement of this result, as well as the more general liftability of c.p.c.\ order zero maps with finite-dimensional domains (obtained by reinterpreting results from \cite{Lo93}), can be found as \cite[Proposition~1.2.4]{Wi09}.} implies that
we can lift $\hat{\beta} \circ \rho_k + \iota_{11} \circ \gamma \circ \phi^{(0)}_k$ to a sequence of order zero maps $\hat{\phi}^{(0)}_{k,n}\colon \C^{r(k,0)} \rightarrow \M_2(\overline{(1-h_n)E(1-h_n)})$. Similarly, we can lift $\iota_{11} \circ \gamma \circ \phi^{(i)}_k$ to a sequence of order zero maps $\hat{\phi}^{(i)}_{k,n}\colon \C^{r(k,i)} \rightarrow \M_2(C_n)$ for each $i > 0$. 

Since $C(X)$ and $J$ are separable and nuclear, $E$ is separable and is nuclear by \cite[Corollary~3.3]{CE76}.
By Proposition~\ref{prop:Approximate-Alpha}, there exist c.p.c.\ maps $\eta_n^{(0)}\colon E \rightarrow \M_{s_n^{(0)}}$ and $\eta_n^{(1)}\colon E \rightarrow \M_{s_n^{(1)}}$, and $^*$-homomorphisms $\xi_n^{(0)}\colon \M_{s_n^{(0)}} \rightarrow  \M_2(A_{n-1})$ and $\xi_n^{(1)}\colon \M_{s_n^{(1)}} \rightarrow  \M_2(A_{n})$ such that 
\begin{equation}\label{eqn:AnAprproximation}
	\lim_{n\rightarrow\omega} \|\iota_{11}(\alpha_n(a)) - \xi_n^{(0)}(\eta_n^{(0)}(a)) - \xi_n^{(1)}(\eta_n^{(1)}(a)) \| = 0
\end{equation}
for all $a \in E$. 

The image of $\xi_n^{(0)}$ lies in $\M_2(A_{n-1})$, which is orthogonal to the hereditary subalgebra $\M_2(\overline{(1-h_n)E(1-h_n)})$. 
Hence, $\xi_n^{(0)} + \hat{\phi}^{(0)}_{k,n}$ is a c.p.c.\ order zero map $\M_{s_n^{(0)}} \oplus \C^{r(k,0)} \rightarrow \M_2(E_\omega)$.
The image of $\xi_n^{(1)}$ lies in $\M_2(A_{n})$, which is orthogonal to $\M_2(C_n)$. Hence, $\xi_n^{(1)} + \hat{\phi}^{(1)}_{k,n}$ is also c.p.c.\ order zero.

We define an approximation $\Theta_{k,n}$ to the embedding $\iota_{11}\colon E \rightarrow \M_2(E)$ that factors through the finite-dimensional $\mathrm{C}^*$-algebras $(\M_{s_n^{(i)}} \oplus \C^{r(k,i)})$ as follows: the downward c.p.c.\ maps are
\begin{equation}
\begin{split}
	 \eta_n^{(0)}\oplus (\psi_k^{(0)} \circ \pi)  &\colon E \rightarrow \M_{s_n^{(0)}}(\C) \oplus \C^{r(k,0)},\\ 
	 \eta_n^{(1)}\oplus (\psi_k^{(1)} \circ \pi)  &\colon E \rightarrow \M_{s_n^{(1)}}(\C) \oplus \C^{r(k,1)} ,\\
	\psi_k^{(i)} \circ \pi &\colon  E \rightarrow \C^{r(k,i)}, \quad\quad i=2,\ldots,d;
\end{split}
\end{equation}
the upward c.p.c.\ order zero maps are
\begin{equation}
\begin{split}
	\xi_n^{(0)} + \hat{\phi}^{(0)}_{k,n} &\colon\M_{s_n^{(0)}}(\C) \oplus \C^{r(k,0)} \rightarrow \M_2(E),\\
	  \xi_n^{(1)} + \hat{\phi}^{(1)}_{k,n} &\colon\M_{s_n^{(1)}}(\C) \oplus \C^{r(k,1)} \rightarrow \M_2(E),\\
	\hat{\phi}^{(i)}_{k,n}&\colon \C^{r(k,i)} \rightarrow \M_2(E), \quad\quad i=2,\ldots,d;
\end{split}
\end{equation}
and we set
\begin{equation}
\begin{split}
    \Theta_{k,n}(a) &=  \sum_{i=0}^1 (\xi_n^{(i)}(\eta_n^{(i)}(a)) + \hat{\phi}^{(i)}_{k,n}(\psi_k^{(i)}(\pi(a))))\\
    &\quad \quad  \quad
    + \sum_{i=2}^d \hat{\phi}^{(i)}_{k,n}(\psi_k^{(i)}(\pi(a)))\\
    &= \sum_{i=0}^1 \xi_n^{(i)}(\eta_n^{(i)}(a))
    + \sum_{i=0}^d \hat{\phi}^{(i)}_{k,n}(\psi_k^{(i)}(\pi(a)))
\end{split}
\end{equation}
for all $a \in E$. (If $d \leq 1$, there are only two downward and two upward maps. If $d=0$, then $\psi_k^{(1)}$ and $\hat{\phi}^{(1)}_{k,n}$ should be interpreted as zero maps.)

Let $a \in E$ and $k \in \N$. Using~\eqref{eqn:AnAprproximation} at the first equality, and the definition of $\hat{\phi}^{(0)}_{k,n}$ and $\hat{\phi}^{(i)}_{k,n}$ for $i=1,\ldots,d$ at the second, we compute in the ultrapower $E_\omega$:
\begin{equation}
\begin{split}
    [\Theta_{k,n}(a)]_{n=1}^\infty &= \iota_{11}(\alpha(a)) + \sum_{i=0}^d [\hat{\phi}^{(i)}_{k,n}(\psi_k^{(i)}(\pi(a)))]_{n=1}^\infty\\
    &= \iota_{11}(\alpha(a)) \\
    &\quad \quad + (\hat{\beta} \circ \rho_k + \iota_{11} \circ \gamma \circ \phi^{(0)}_k)(\psi_k^{(0)}(\pi(a)))\\
    &\quad \quad + \sum_{i=1}^d \iota_{11} \circ \gamma \circ \phi^{(i)}_k(\psi_k^{(i)}(\pi(a)))\\
    &= \iota_{11}(\alpha(a)) + \hat{\beta}(\rho_k \circ \psi_k^{(0)}(\pi(a)))\\
    &\quad \quad + \sum_{i=0}^d \iota_{11}(\gamma(\phi^{(i)}_k \circ \psi_k^{(i)}(\pi(a)))).
\end{split}
\end{equation}
Now, using~\eqref{eqn:C(X)-approx} in the first line, $\hat{\beta}|_{C(X)} = \iota_{11}\circ\beta$ in the second, and ~\eqref{eqn:BW-approx} in the third, we obtain
\begin{equation}
\begin{split}
    \lim_{k\to\infty}[\Theta_{k,n}(a)]_{n=1}^\infty &= \iota_{11}(\alpha(a)) + \hat{\beta}(\pi(a)) + \iota_{11}(\gamma(\pi(a)))\label{eqn:Theta-Ultrapower}\\
    &= \iota_{11}(\alpha(a)) + \iota_{11}(\beta(\pi(a))) + \iota_{11}(\gamma(\pi(a)))\\
    &= \iota_{11}(a).
\end{split}
\end{equation}
Reformulating \eqref{eqn:Theta-Ultrapower}, we have
\begin{equation}
 \lim_{k\to\infty}\lim_{n\to\omega} \|\iota_{11}(a) - \Theta_{k,n}(a)\| = 0
\end{equation} 
for all $a \in E$.
Therefore, by Definition~\ref{def:DimNuc}, the nuclear dimension of $\iota_{11}\colon E \rightarrow \M_2(E)$ is at most $\max(1,d)$.

Identify $E$ with the hereditary subalgebra of $\M_2(E)$ consisting of matrices that are only non-zero in the (1,1) entry. 
Then $\mathrm{id}_E$ is the co-restriction of $\iota_{11}$ to $E$.
Hence, by \cite[Proposition~1.6]{BGSW19}, we have $\dimnuc(E) \leq \max(1,d)$. 
%Since we already have shown that $\dimnuc(E) \geq \max(1,d)$, the result follows.
\end{proof}

We strengthen Theorem~\ref{thm:main} a posteriori to allow for the quotient appearing in the exact sequence~\eqref{eqn:extension1} just to be stably isomorphic to $C(X)$. We need a simple stabilisation lemma.

\begin{lemma}\label{lem:extension stabilisation}
Let $B$ be a unital $\mathrm{C}^*$-algebra, let $A$ be a $\mathrm{C}^*$-algebra stably isomorphic to $B$, let $J$ be a stable $\sigma$-unital $\mathrm{C}^*$-algebra, and let 
\begin{equation}\label{eqn:general extension by stable}
0 \to J \to E \to A \to 0 
\end{equation}
be an extension. Then there is an extension $0 \to J \to F \to B \to 0$ such that $F \otimes \K \cong E \otimes \K$. If~\eqref{eqn:general extension by stable} is essential, then the extension $0 \to J \to F \to B \to 0$ can be chosen also to be essential.
\end{lemma}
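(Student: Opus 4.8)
The plan is to stabilise the extension, to cut the (stabilised) quotient down to a full corner isomorphic to $B$, and to recognise the resulting algebra as a \emph{full hereditary subalgebra} of $E \otimes \K$, so that Brown's theorem supplies the stable isomorphism. Concretely, I would first tensor \eqref{eqn:general extension by stable} with $\K$ to get $0 \to J \otimes \K \to E \otimes \K \to A \otimes \K \to 0$. Since $J$ is stable, $J \otimes \K \cong J$; and since $A$ is stably isomorphic to $B$, I may fix an isomorphism $\theta \colon A \otimes \K \to B \otimes \K$. Composing the quotient map with $\theta$ yields a surjection $\pi' \colon E \otimes \K \to B \otimes \K$ with kernel (identified with) $J$, i.e.\ an extension $0 \to J \to E \otimes \K \xrightarrow{\pi'} B \otimes \K \to 0$.

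Using that $B$ is unital, set $p = 1_B \otimes e_{11} \in B \otimes \K$; this is a full projection with $p(B \otimes \K)p \cong B$. I would then define $F := (\pi')^{-1}\big(p(B \otimes \K)p\big)$. As the preimage of a hereditary subalgebra under a surjective $^*$-homomorphism, $F$ is a hereditary subalgebra of $E \otimes \K$: if $0 \le a \le f$ with $f \in F$ then $0 \le \pi'(a) \le \pi'(f) \in p(B\otimes\K)p$, so $\pi'(a) \in p(B\otimes\K)p$ and $a \in F$. Moreover $F$ contains $\ker \pi' = J$ as an ideal and $\pi'(F) = p(B\otimes\K)p \cong B$, so $0 \to J \to F \to B \to 0$ has the required form. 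Since $p$ is full and $F \supseteq \ker \pi'$, the closed ideal of $E \otimes \K$ generated by $F$ surjects onto $B \otimes \K$ and contains $J$, hence is all of $E \otimes \K$; thus $F$ is a \emph{full} hereditary subalgebra.

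It remains to note that $E \otimes \K$ is $\sigma$-unital: $B$ is unital, so $B \otimes \K$, and hence $A \otimes \K$ and $A$, are $\sigma$-unital; together with $\sigma$-unitality of $J$ this makes $E$, and so $E \otimes \K$, $\sigma$-unital. Brown's theorem \cite[Theorem~2.8]{Br77} then gives $F \otimes \K \cong (E \otimes \K) \otimes \K \cong E \otimes \K$, as required. For the essential case I would use the annihilator characterisation of essentiality: $J$ essential in $E$ gives $\Ann_{E \otimes \K}(J \otimes \K) = \Ann_E(J) \otimes \K = 0$, and since $F \subseteq E \otimes \K$ contains $J$ as an ideal we get $\Ann_F(J) \subseteq \Ann_{E \otimes \K}(J) = 0$; hence $J$ is automatically essential in $F$.

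The key step---and the one I expect to be the main obstacle---is the realisation of $F$ as a full hereditary subalgebra of $E \otimes \K$. Trying instead to match Busby invariants directly runs into the delicate comparison between $Q(J) \otimes \K$ and $Q(J \otimes \K)$, whereas the hereditary-subalgebra viewpoint sidesteps this entirely and lets Brown's theorem do the heavy lifting. The only points requiring care are the $\sigma$-unitality bookkeeping for $E$ and the fullness of $p$, both of which rely on $B$ being unital.
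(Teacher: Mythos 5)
Your argument is correct and takes essentially the same route as the paper: tensor by $\K$, cut the quotient down to the corner $p(B\otimes\K)p\cong B$ over the unit, observe that the resulting $F$ (your full preimage $(\pi')^{-1}\bigl(p(B\otimes\K)p\bigr)$ coincides with the paper's $J+\overline{fEf}$ for a positive lift $f$ of $p$) is a full hereditary subalgebra of $E\otimes\K$, and apply Brown's theorem, with the same annihilator argument for essentiality. The only point to add is that \cite[Theorem~2.8]{Br77} also requires the hereditary subalgebra $F$ itself to be $\sigma$-unital, not just the ambient algebra; this follows by exactly the bookkeeping you already carry out for $E$, since $F$ is an extension of the unital $p(B\otimes\K)p$ by the $\sigma$-unital $J$.
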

\begin{proof}
Since $J$ is stable, by tensoring~\eqref{eqn:general extension by stable} throughout by $\K$, we can assume that $A = B \otimes \K$. 
Fix a positive lift $f \in E$ of the projection $p = 1_B \otimes e_{11} \in B \otimes \K$.
Let $F = J + \overline{fEf}$.  By construction $J \triangleleft F$ and the quotient is isomorphic to $(1_B \otimes e_{11})(B \otimes \K)(1_B \otimes e_{11}) \cong B$. If $J \triangleleft F$ is not essential, there is a non-zero $x \in F$ such that $x J = 0$, which implies that $J$ is not essential in $E$.

By hypothesis, $J$ is $\sigma$-unital, and $A = B \otimes \K$ is $\sigma$-unital as $B$ is unital.
Hence, there exist $h \in J_+$ and $a \in A_+$ with $J = \overline{hJh}$ and $A = \overline{aAa}$.
Let $e \in E_+$ be a lift of $a$.
Then $E = \overline{(h+e)E(h+e)}$ and $F = \overline{(h+f)F(h+f)}$, so $E$ and $F$ are $\sigma$-unital.
Moreover, since $F = \overline{(h+f)E(h+f)}$, it is a hereditary subalgebra of $E$, and it is full because it contains $J$ and $\pi(f)$ generates $A$ as an ideal.
Therefore, \cite[Theorem~2.8]{Br77} implies that $E \otimes \K \cong F \otimes \K$.
\end{proof}

\begin{corollary}\label{cor:stable quotient}
Let $J$ be a stable Kirchberg algebra and $X$ a compact metric space. Suppose that $A$ is a $\mathrm{C}^*$-algebra that is stably isomorphic to $C(X)$, and let
\begin{equation}\label{eqn:extension by stable}
 0 \rightarrow J \rightarrow E \xrightarrow{\pi} A \rightarrow 0
\end{equation}
be an essential extension. Then $\dimnuc{E} = \max(1,\dim(X))$.
\end{corollary}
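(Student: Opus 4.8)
The plan is to reduce directly to Theorem~\ref{thm:main} by stabilising the quotient, using the stabilisation lemma just proved. First I would verify that the hypotheses of Lemma~\ref{lem:extension stabilisation} hold with $B = C(X)$: since $X$ is compact, $C(X)$ is unital; and a stable Kirchberg algebra is separable (hence $\sigma$-unital) and stable by assumption. Applying the lemma to the essential extension~\eqref{eqn:extension by stable}, with $A$ stably isomorphic to $C(X)$, therefore produces an essential extension
\begin{equation}
0 \rightarrow J \rightarrow F \rightarrow C(X) \rightarrow 0
\end{equation}
with $F \otimes \K \cong E \otimes \K$.

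Next I would apply Theorem~\ref{thm:main} directly to this new extension. It has exactly the required form: it is essential, its ideal $J$ is a stable Kirchberg algebra, and its quotient is $C(X)$ for the compact metric space $X$. Theorem~\ref{thm:main} thus gives $\dimnuc(F) = \max(1,\dim(X))$.

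Finally, the conclusion follows from the invariance of nuclear dimension under stable isomorphism. Since tensoring with $\K$ leaves nuclear dimension unchanged (see \cite[Corollary~2.8]{WZ10}), and $E \otimes \K \cong F \otimes \K$, we obtain
\begin{equation}
\dimnuc(E) = \dimnuc(E \otimes \K) = \dimnuc(F \otimes \K) = \dimnuc(F) = \max(1,\dim(X)).
\end{equation}

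I do not expect any serious obstacle here: all the real content resides in Theorem~\ref{thm:main} and in Lemma~\ref{lem:extension stabilisation}, and this corollary is a formal consequence of chaining them together. The only points demanding care are checking that the $\sigma$-unitality and stability hypotheses of the stabilisation lemma genuinely hold for the stable Kirchberg ideal $J$ (they do, as noted above), confirming that essentiality is transported by the lemma so that Theorem~\ref{thm:main} is applicable to $F$, and citing the correct form of the stable-isomorphism invariance of nuclear dimension.
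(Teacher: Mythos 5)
Your proposal is correct and follows exactly the same route as the paper: apply Lemma~\ref{lem:extension stabilisation} to replace the quotient $A$ by $C(X)$, invoke Theorem~\ref{thm:main} on the resulting essential extension, and transfer the conclusion back via the stable-isomorphism invariance of nuclear dimension from \cite[Corollary~2.8]{WZ10}. The hypothesis checks you flag (unitality of $C(X)$, $\sigma$-unitality and stability of $J$, preservation of essentiality) are precisely the points the paper's one-line proof relies on.
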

\begin{proof}
Nuclear dimension is preserved by stable isomorphism \cite[Corollary~2.8(i)]{WZ10} and $A$ and $J$ are $\sigma$-unital, so by Lemma~\ref{lem:extension stabilisation} we can replace $A$ with $C(X)$; and then Theorem~\ref{thm:main} applies.
\end{proof}

\section{Graph algebras}\label{sec:graphs}

In this section we relate our results to graph $\mathrm{C}^*$-algebras, describing exactly which finite graphs have $\mathrm{C}^*$-algebras that satisfy the hypotheses of Theorem~\ref{thm:main} (or rather, Corollary~\ref{cor:stable quotient}).

If $E$ is a directed graph, then its vertex and edge sets are denoted $E^0$ and $E^1$ respectively. The range and source maps $r, s: E^1 \to E^0$ encode the directions of the edges. For $n > 1$, we write $E^n = \{e_1\cdots e_n : e_i \in E^1, r(e_i) = s(e_{i+1})\}$ for the set of paths of length $n$, and for $\mu = e_1\cdots e_n \in E^n$ we write $s(\mu) = s(e_1)$ and $r(\mu) = r(e_n)$. We write $E^* = \bigcup_{n \ge 0} E^n$. Note that our edge direction conventions are as per the earlier papers on graph algebras, as these form the primary sources for almost all the results we cite. Today it is more typical to use the opposite direction conventions.

A \emph{sink} in $E$ is a vertex $v \in E^0$ such that $s^{-1}(v) = \emptyset$. So we say that $E$ \emph{has no sinks} if $s : E^1 \to E^0$ is surjective. We say that $E$ is \emph{connected} if the smallest equivalence relation $\sim$  on $E^0$ such that $r(e) \sim s(e)$ for all $e \in E^1$ is the complete equivalence relation. We say that $E$ is \emph{strongly connected} if for all $v,w \in E^0$ there exists $\mu \in E^*$ such that $s(\mu) = v$ and $r(\mu) = w$. (With these conventions, the graph with one vertex and no edges is strongly connected and has a sink.)

A subset $V \subseteq E^0$ is \emph{hereditary} if whenever $e \in E^1$ satisfies $s(e) \in V$, we also have $r(e) \in V$. Equivalently, $V$ is hereditary if whenever $\mu \in E^*$ satisfies $s(\mu) \in V$ we also have $r(\mu) \in V$.

A subset $V \subseteq E^0$ is \emph{saturated} if $v \in V$ whenever $v \in E^0$ has the property that $s^{-1}(v) \neq \emptyset$ and $r(e) \in V$ for every $e \in s^{-1}(v)$. 
%An induction shows that $V$ is saturated if and only if, whenever $v \in E^0$ and $n \in \mathbb{N}$ have the property that every $\mu \in E^n$ with $s(\mu) = v$ satisfies $r(\mu) \in V$, we also have $v \in V$. 
The \emph{saturation} $\overline{V}$ of a subset $V \subseteq E^0$ is the smallest saturated set containing $V$. If $E$ is finite and has no sinks, as is the case for the graphs in this paper, then an induction on the length of paths shows that
\begin{equation}\label{Graph.NewEq}
 \{v \in E^0 : \exists n \ge 0\text{ s.t. } \forall \mu\in E^n, \big(s(\mu) = v \implies r(\mu) \in V\big)\}\subseteq \overline{V}.
\end{equation}
If, additionally, $V$ is hereditary, then we have equality in \eqref{Graph.NewEq}. If $V$ is hereditary, then $\overline{V}$ is also hereditary. 

Given any subset $V \subseteq E^0$, the \emph{subgraph of $E$ over $V$} is the directed graph with $E^0_V = V$ and $E^1_V = \{e \in E^1 : s(e) \in V \text{ and } r(e) \in V\}$, and with source and range maps inherited from $E$. If $V$ is a hereditary set, then $E^1_V$ is precisely $s^{-1}(V)$.

A \emph{cycle} in $E$ is a path $\mu = e_1\cdots e_n \in E^* \setminus E^0$ such that $s(\mu) = r(\mu)$. An \emph{exit} from this cycle is an edge $e \in E^1$ such that $s(e) = s(e_i)$ but $e \not= e_i$ for some $i \le n$. For $C \subseteq E^0$, we say that $E_C$ is a \emph{simple cycle} if there is a cycle $\mu = e_1 \dots e_{|C|}$ in $E_C$ with $e_i \not= e_j$ for all $i\not= j$ such that $E_C^1 = \{e_1, \dots, e_{|C|}\}$. This forces $E^1_C \not= \emptyset$, so the graph with one vertex and no edges is not a simple cycle.

The \emph{graph $\mathrm{C}^*$-algebra} $\mathrm{C}^*(E)$ is the universal $\mathrm{C}^*$-algebra generated by mutually orthogonal projections $\{p_v : v \in E^0\}$ and partial isometries $\{s_e : e \in E^1\}$ with mutually orthogonal range projections such that $s_e^* s_e = p_{r(e)}$ and $s_es_e^* \leq p_{s(e)}$ for all $e \in E^1$, and $p_v = \sum_{s(e) = v} s_e s_e^*$ whenever $0 < |s^{-1}(v)| < \infty$. For $V \subseteq E^0$, we write $I_V$ for the ideal of $\mathrm{C}^*(E)$ generated by $\{p_v : v \in V\}$. 

For $\mu = e_1 \cdots e_n \in E^*$. we write $s_\mu = s_{e_1} \cdots s_{e_n}$. For any distinct $e,e' \in E^1$, the range projections of $s_e$ and $s_{e'}$ are orthogonal, so $s_e^*s_{e'} = 0$. Therefore, for any $\mu,\nu\in E^*$, the projections $s_\mu s_\mu^*$ and $s_\nu s_\nu^*$ are orthogonal unless either $\mu$ factorises as $\nu\mu'$ or $\nu$ factorises as $\mu\nu'$.

If $\mu$ is a cycle with an exit, say $s(e) = s(e_i)$ but $e \not= e_i$, then $\nu = e_1 \cdots e_{i-1} e$ satisfies $s_\mu s^*_\mu \le p_{s(\mu)} - s_\nu s^*_\nu < p_{r(\mu)} = s_\mu^*s_\mu$. So any $\mathrm{C}^*$-subalgebra of $\mathrm{C}^*(E)$ containing $s_\mu$ has an infinite projection.  The graph $\mathrm{C}^*$-algebra of a strongly connected finite directed graph $E$ that has no sinks and is not a simple cycle is a Kirchberg algebra. In fact, by the work of \cite{KPRR97,KPR98} it is a simple Cuntz--Krieger algebra (see \cite[Pages 78--80]{Rordam2002}).

We now state our characterisation of the finite graphs whose $\mathrm{C}^*$-algebras satisfy the hypotheses of Corollary~\ref{cor:stable quotient}. 

\begin{proposition}\label{prop:which graph algebras}
Let $E$ be a finite directed graph. Then there is an exact sequence $0 \to J \to \mathrm{C}^*(E) \overset{\pi}{\to} A \to 0$ in which $J$ is essential and is a stable Kirchberg algebra and $A$ is stably isomorphic to $C(X)$ for some compact Hausdorff space $X$ if and only if there is a decomposition $E^0 = V_0 \sqcup V_1 \sqcup V_2$ such that
\begin{enumerate}
    \item the subgraph $E_0 = E_{V_0}$ over $V_0$ is the saturation of a nonempty disjoint union $\bigsqcup_{C \in \mathcal{C}} C$ such that each $E_C$ is a simple cycle, 
    \item the subgraph $E_1 = E_{V_1}$ over $V_1$ contains no cycles,
    \item the subgraph $E_2 = E_{V_2}$ over $V_2$ is strongly connected, has at least one edge, and is not a simple cycle,
    \item whenever $i > j$, there are no paths $\mu \in E^*$ such that $r(\mu) \in V_j$ and $s(\mu) \in V_i$, and
    \item for each $v \in V_0 \cup V_1$, there is a path $\mu \in E^*$ such that $s(\mu) = v$ and $r(\mu) \in V_2$. 
\end{enumerate}
In this case, $X \cong \bigsqcup_{C \in \mathcal{C}} \mathbb{T}$ is a disjoint union of circles and $J \cong \mathrm{C}^*(E_2) \otimes \K$.
\end{proposition}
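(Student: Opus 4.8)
The plan is to translate both implications into the combinatorial language of the ideal--hereditary-set correspondence for graph $\mathrm{C}^*$-algebras developed in \cite{aHR97, HS02, CS2017}, so that the analytic hypotheses on $J$ and $A$ become structural conditions on $E$. The tools I would lean on are: for a saturated hereditary set $H \subseteq E^0$, the gauge-invariant ideal $I_H$ is Morita equivalent to $\mathrm{C}^*(E_H)$ and the quotient $\mathrm{C}^*(E)/I_H$ is isomorphic to the graph algebra of the quotient graph obtained by deleting the edges of $E$ whose range lies in $H$; the fact (recalled in this section) that a finite strongly connected graph which is not a simple cycle has a unital Kirchberg graph algebra, while a finite graph in which every cycle has no exit has a stably finite, indeed stably commutative, graph algebra; and the observation recorded above that a cycle with an exit produces an infinite projection.

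For the \emph{if} direction I would set $J := I_{V_2}$. Condition (4) makes $V_2$ hereditary, and after checking it is saturated, $J$ becomes a genuine gauge-invariant ideal with $\mathrm{C}^*(E)/J \cong \mathrm{C}^*(E_{V_0 \sqcup V_1})$. Condition (3) gives that $E_2$ is strongly connected and not a simple cycle, so $\mathrm{C}^*(E_2)$ is Kirchberg; to upgrade the Morita equivalence to the asserted isomorphism $J \cong \mathrm{C}^*(E_2) \otimes \K$ I would show $J$ is stable and invoke \cite[Theorem~2.8]{Br77}, stability following from the existence of infinitely many paths into $V_2$ produced by circulating around a simple cycle in $\mathcal{C}$ (nonempty by (1)) and then following the path into $V_2$ guaranteed by (5). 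Essentiality of $J$ is precisely the statement that every vertex reaches $V_2$, namely (5). Finally, conditions (1), (2) and (4) force every cycle of $E_{V_0\sqcup V_1}$ to be one of the disjoint simple cycles $E_C$ with no exit in the quotient graph, and identifying the resulting stably finite graph algebra with $C\big(\bigsqcup_{C\in\mathcal C}\mathbb{T}\big)$ yields $A$.

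For the \emph{only if} direction I would begin from the fact that $J$, being Kirchberg, has real rank zero and is therefore generated by its projections; hence $J = I_{V_2}$ for the saturated hereditary set $V_2 = \{v \in E^0 : p_v \in J\}$, so that $J$ is automatically gauge-invariant. Simplicity and pure infiniteness of $J$ then translate into $E_2$ being strongly connected and not a simple cycle, giving (3), and essentiality into every vertex reaching $V_2$. Since $A \cong \mathrm{C}^*(E_{E^0\setminus V_2})$ is stably commutative, hence stably finite, the infinite-projection observation forces the complementary graph to have no cycle with an exit; its no-exit cycles form a disjoint family $\mathcal{C}$ of simple cycles, and taking $V_0$ to be the saturation of $\bigsqcup_{C}C$ and $V_1$ the remaining vertices recovers (1) and (2), after which the layers are reorganised to meet the forward-filtration condition (4).

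I expect the main obstacle to be the precise identification of the quotient with $C\big(\bigsqcup_{C}\mathbb{T}\big)$, together with the bookkeeping needed to extract the clean three-layer partition satisfying (4). The delicate point is that stable finiteness of the quotient only rules out \emph{exits} from the surviving cycles; to obtain circles \emph{and no spurious point (finite-dimensional) summands} one must further verify that the hypotheses route every vertex of $V_0 \sqcup V_1$ through a simple cycle once the Kirchberg ideal is collapsed, so that each component of the quotient graph is a no-exit simple cycle with acyclic trees feeding into it, giving a matrix algebra over $C(\mathbb{T})$ and hence, stably, a circle. Establishing stability of $J$ via the infinite-multiplicity argument, and confirming that $V_2$ is saturated in the forward construction, are the remaining technical checks.
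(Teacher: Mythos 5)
Your overall strategy matches the paper's (the BPRS ideal--hereditary-set machinery, Brown's theorem, an ET09-style stability argument), but there are concrete gaps at the two places where the real work happens. The most serious is in the ``only if'' direction: the step ``$J$ has real rank zero, hence is generated by its projections, hence $J = I_{V_2}$ for $V_2 = \{v : p_v \in J\}$'' is a non sequitur. Projections in $J$ are projections in $\mathrm{C}^*(E)$, not vertex projections, and nothing you have said rules out $\{v : p_v \in J\} = \emptyset$. This is precisely the crux: one must show that a simple purely infinite ideal necessarily contains a vertex projection. The paper does this by invoking the Hong--Szyma\'nski/Carlsen--Sims description of ideals containing no vertex projections (they are of the form $J_{\emptyset,U}$ and sit inside the ideal $I_K$ generated by the vertices on no-exit cycles, which is stably finite by the circle-algebra lemma), contradicting pure infiniteness; only then does simplicity give $J = I_W$. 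Relatedly, your partition is misassigned: you take $V_2 = \{v: p_v \in J\}$ and place $V_1$ in the complement, but then $E_{V_2}$ need not be strongly connected (the set $W=\{v:p_v\in J\}$ can contain vertices lying on no cycle of $E_W$, which merely feed into the core). The correct split puts $V_1$ \emph{inside} $W$ as its acyclic layer and $V_2 = W \setminus V_1$; your ``reorganise the layers to meet condition (4)'' glosses over exactly the arguments (simplicity of $\mathrm{C}^*(E_W)$ forcing strong connectivity of $E_2$, essentiality of $J$ forcing condition (5) on $V_0$ via orthogonality of $I_C$ and $I_W$) that make the reorganisation work.

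In the ``if'' direction there is a smaller but genuine error: $V_2$ is in general \emph{not} saturated (whenever $V_1 \neq \emptyset$, conditions (2) and (4) force $V_1 \subseteq \overline{V_2}$), so the relevant saturated hereditary set is $\overline{V_2} = V_1 \cup V_2$ and the quotient is $\mathrm{C}^*(E_{V_0})$, not $\mathrm{C}^*(E_{V_0 \sqcup V_1})$ as you claim. The latter would be wrong even stably: since condition (4) forbids paths from $V_1$ to $V_0$, the vertices of $V_1$ cannot reach any cycle in $E_{V_0 \sqcup V_1}$ and would contribute spurious finite-dimensional summands, so that algebra is not stably isomorphic to $C\big(\bigsqcup_{C} \mathbb{T}\big)$. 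Finally, ``essentiality of $J$ is precisely the statement that every vertex reaches $V_2$'' only accounts for gauge-invariant ideals; to handle an arbitrary non-zero ideal you need the Cuntz--Krieger uniqueness theorem (available because conditions (3) and (5) guarantee every cycle of $E$ has an exit) to produce a vertex projection in it first. None of these gaps requires a change of strategy, but each is a step that would fail as written rather than a mere bookkeeping detail.
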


To prove Proposition~\ref{prop:which graph algebras}, we will twice use the following folklore lemma.

\begin{lemma}\label{lem:graph circle algebras}
Let $E$ be a finite directed graph with no sinks. Suppose that $\mathcal{C}$ is a finite set of mutually disjoint subsets of $E^0$ such that for each $C \in \mathcal{C}$, $E_C$ is a simple cycle. Let $V = \bigcup_{C \in \mathcal{C}} C$. Then $I_V$ is stably isomorphic to $\bigoplus_{C \in \mathcal{C}} M_{|C|}(C(\mathbb{T}))$. 
\end{lemma}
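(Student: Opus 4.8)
The plan is to exhibit $\bigoplus_{C\in\mathcal C}M_{|C|}(C(\mathbb{T}))$ as a full corner of $I_V$ and then apply Brown's stable isomorphism theorem \cite[Theorem~2.8]{Br77}. Put $P=\sum_{v\in V}p_v$; this is a projection in $\mathrm{C}^*(E)$ since $E$ is finite, and $P\in I_V$. As $p_v=p_vP$ for each $v\in V$, the ideal generated by $P$ contains every $p_v$ with $v\in V$, so $P$ is full in $I_V$. Because $\mathrm{C}^*(E)$ is separable, $I_V$ is $\sigma$-unital, and \cite[Theorem~2.8]{Br77} gives $I_V\otimes\K\cong (PI_VP)\otimes\K$. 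Since $P\in I_V$ we have $PI_VP=P\mathrm{C}^*(E)P$, so it remains to identify this corner with $\bigoplus_{C}\mathrm{C}^*(E_C)$ and to compute each $\mathrm{C}^*(E_C)$.

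For the cycle computation, fix $C\in\mathcal C$ of length $n=|C|$ with cycle $e_1\cdots e_n$ through vertices $v_1,\dots,v_n$ (indices read mod $n$). In $E_C$ the vertex $v_i$ emits only the edge $e_i$, so the Cuntz--Krieger relation gives $p_{v_i}=s_{e_i}s_{e_i}^*$, while $s_{e_i}^*s_{e_i}=p_{v_{i+1}}$. Thus the $s_{e_i}$ are partial isometries implementing $p_{v_1}\sim\dots\sim p_{v_n}$ and assemble into a system of $n\times n$ matrix units, identifying $\mathrm{C}^*(E_C)\cong M_n\!\big(p_{v_1}\mathrm{C}^*(E_C)p_{v_1}\big)$. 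The corner $p_{v_1}\mathrm{C}^*(E_C)p_{v_1}$ is generated by the unitary $z=s_{e_1}\cdots s_{e_n}$, which has full spectrum $\mathbb{T}$ by universality of $\mathrm{C}^*(E_C)$; hence $p_{v_1}\mathrm{C}^*(E_C)p_{v_1}\cong C(\mathbb{T})$ and $\mathrm{C}^*(E_C)\cong M_{n}(C(\mathbb{T}))$. Summing over $\mathcal C$ then matches the target algebra.

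The remaining, and genuinely delicate, step is the corner identification. Using the standard spanning family $\{s_\mu s_\nu^*:r(\mu)=r(\nu)\}$ of $\mathrm{C}^*(E)$ and the fact that $Ps_\mu=s_\mu$ exactly when $s(\mu)\in V$ and $Ps_\mu=0$ otherwise, one sees that $P\mathrm{C}^*(E)P$ is the closed span of the $s_\mu s_\nu^*$ with $s(\mu),s(\nu)\in V$ and $r(\mu)=r(\nu)$. The crux is to show that any path $\mu$ with $s(\mu)\in V$ stays in the single cycle containing $s(\mu)$. Since each $E_C$ is a simple cycle, every vertex of $C$ emits a unique edge inside $C$, and the cycles are mutually disjoint; the delicate point is that such paths do not escape $V$, which forces $\mu\in E_C^*$ and $r(\mu)\in C$. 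Granting this, $r(\mu)=r(\nu)$ can hold only for $\mu,\nu$ in the same $E_C^*$, the blocks for distinct cycles are orthogonal (as their vertex projections are), and $P\mathrm{C}^*(E)P=\bigoplus_C\mathrm{C}^*(E_C)$. With the previous two steps this proves the lemma. I expect this control of the paths emanating from $V$ to be the main obstacle: it is exactly where the cycle structure enters, and one must ensure the corner picks up no structure lying downstream of the cycles.
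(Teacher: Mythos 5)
Your overall route is the paper's: exhibit $\bigoplus_{C\in\mathcal C}\mathrm{C}^*(E_C)$ as a full corner of $I_V$, invoke Brown's theorem, and identify each $\mathrm{C}^*(E_C)$ with $M_{|C|}(C(\mathbb{T}))$. (The paper outsources the first step to \cite[Theorem~4.1(c)]{BPRS} and the last to \cite[Theorem~2.2]{Evans82}, whereas you do both by hand; your fullness argument for $P=\sum_{v\in V}p_v$ and your matrix-unit/spectrum computation for a cycle with no exit are both fine in the abstract algebra $\mathrm{C}^*(E_C)$.) The problem is the step you explicitly leave open --- that every path $\mu$ with $s(\mu)\in V$ remains inside the cycle containing $s(\mu)$ --- and it is a genuine gap: this claim does \emph{not} follow from the hypothesis that each $E_C$ is a simple cycle. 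That hypothesis only constrains the edges of $E$ with \emph{both} endpoints in $C$; it does not prevent a vertex of $C$ from emitting an edge whose range lies outside $V$. If such an exit exists, paths do escape $V$, the corner $P\mathrm{C}^*(E)P$ is strictly larger than $\bigoplus_C\mathrm{C}^*(E_C)$, the relation $p_{v_i}=s_{e_i}s_{e_i}^*$ that you need fails in $\mathrm{C}^*(E)$ (there the Cuntz--Krieger relation gives $p_{v_i}=\sum_{s(e)=v_i}s_es_e^*\geq s_{e_i}s_{e_i}^*$), and $z=s_{e_1}\cdots s_{e_n}$ becomes a proper isometry in the corner rather than a unitary. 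Concretely, for the graph with a loop $e$ at $v$, an edge $f\colon v\to w$ and a loop at $w$, taking $\mathcal C=\{\{v\}\}$, the subgraph $E_{\{v\}}$ is a simple cycle, yet $I_{\{v\}}$ contains $p_w=s_f^*p_vs_f$ and hence equals $\mathrm{C}^*(E)$, which contains the infinite projection $p_v$.

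So the missing ingredient is the standing assumption, needed to make the corner identification work, that the cycles have no exits in $E$ --- equivalently, that every vertex of $V$ emits exactly one edge, so that $V$ is hereditary and $s^{-1}(V)=\bigsqcup_{C\in\mathcal C}E^1_C$. This is precisely the situation in which the paper applies the lemma (both applications are to sets of vertices lying on cycles with no exit in the ambient graph), and it is what the citation of \cite[Theorem~4.1(c)]{BPRS} presupposes, since that result requires the vertex set to be hereditary and identifies the full corner of $I_V$ with $\mathrm{C}^*(E_V)$ where $E^1_V=s^{-1}(V)$. Once that hypothesis is in place, your ``delicate point'' becomes immediate --- each vertex of $C$ emits only its cycle edge, so every path with source in $C$ stays in $C$ --- and the rest of your argument closes up correctly.
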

\begin{proof}
The ideal $I_V$ is generated as an ideal by $\{p_v\colon v\in C\in\mathcal C\}$, so applying \cite[Theorem~4.1(c)]{BPRS} with $X=\bigsqcup_{C\in \mathcal C}C$, it follows that $\bigoplus_{C \in \mathcal{C}} \mathrm{C}^*(E_C)$ is a full corner in $I_V$. Hence $I_V$ is stably isomorphic to $\bigoplus_{C \in \mathcal{C}} \mathrm{C}^*(E_C)$ by Brown's theorem (\cite[Theorem~2.8]{Br77}). Since each $E_C$ is a cycle with no exit, its edge-adjacency matrix is an irreducible permutation matrix of order $|C|$. Hence \cite[Theorem~2.2]{Evans82} implies that each $\mathrm{C}^*(E_C) \cong M_{|C|}(C(\mathbb{T}))$.
\end{proof}

\begin{proof}[Proof of Proposition~\ref{prop:which graph algebras}]
First suppose that $E^0 = V_0 \sqcup V_1 \sqcup V_2$ satisfies Conditions (1)--(5). 
Then $V_2$ is hereditary by Condition (4). 
Conditions (3)~and~(5) imply that $E$ has no sinks. Since $V_1$ is finite, Condition~(2) implies that there exists $n \ge 0$ such that $r(\mu) \not \in V_1$ for all $\mu \in E^n$ with $s(\mu) \in V_1$; and then Condition~(4) implies that $r(\mu) \in V_2$ for all $\mu \in E^n$ with $s(\mu) \in V_1$. Consequently, $V_1$ is contained in the saturation $\overline{V_2}$ of $V_2$. 
Condition~(1) implies that $V_0 \cap \overline{V_1 \cup V_2} = \emptyset$. 
So $V_1 \cup V_2 = \overline{V_1 \cup V_2} = \overline{V_2}$ is a saturated hereditary subset of $E^0$ whose complement is $V_0$. 
Let $J = I_{V_2}$, the ideal of $\mathrm{C}^*(E)$ generated by $\{p_v : v \in V_2\}$. 
By \cite[Lemma~4.3]{BPRS}, $J = I_{V_1 \cup V_2}$. Taking $X = V_2$ in \cite[Theorem~4.1(c)]{BPRS} we see that 
\begin{equation}
\mathrm{C}^*(\{s_e, p_v : v \in V_2, e \in s^{-1}(V_2)\}) \subseteq \mathrm{C}^*(E)
\end{equation} is canonically isomorphic to $\mathrm{C}^*(E_2)$,  and is a full corner of $J$. By Brown's theorem (\cite[Theorem~2.8]{Br77}), $J$ and $\mathrm{C}^*(E_2)$ are stably isomorphic. Thus, since $\mathrm{C}^*(E_2)$ is a Kirchberg algebra (by Condition (3)), so too is $J$.

Taking $H = V_1 \cup V_2$ in \cite[Theorem~4.1(b)]{BPRS}, we obtain an exact sequence
\begin{equation}
    0 \to J \to \mathrm{C}^*(E) \to \mathrm{C}^*(E_0) \to 0.
\end{equation}
Since $E_0$ is the saturation of a disjoint union $\bigsqcup_{C \in \mathcal{C}} C$ of sets of vertices on cycles with no exits, Lemma~\ref{lem:graph circle algebras} implies that, writing $X = \bigsqcup_{C \in \mathcal{C}} \mathbb{T}$, we have $\mathrm{C}^*(E_0)$ is stably isomorphic to $C(X)$.

It remains to show that is stable and essential.
For the former, we adapt the idea of \cite[Proposition~6.4]{ET09}. 
Let 
\begin{equation}
F = V_2 \cup \{\mu = e_1 \dots e_n \in E^* : r(\mu) \in V_2\text{ and }s(e_n) \not\in V_2\} \subseteq E^*.
\end{equation} Since $V_2$ is hereditary, if $e_1\cdots e_n \in F$ then $s(e_i) \not\in V_2$ for all $i$. 
Hence, if $\mu, \nu \in F$ are distinct, then $\nu$ does not factorise as $\mu\nu'$ and vice-versa, and consequently $s_\mu s^*_\mu s_\nu s^*_\nu = 0$.
Fix $v\in V_2$. Following arbitrary large repetitions of a simple cycle in $E_0$ (given by Condition (5)) by a path connecting it to $v$ (given by Condition (3)), it follows that $Fv\coloneqq \{\nu \in F : r(\nu) = v\}$ is infinite. 
For each $v \in V_2$, write $\lambda^v_1=v$ and fix an enumeration $(\lambda_i^v)_{i=2}^\infty$ of $Fv \setminus \{v\}$. Since $(s_\mu s^*_\mu)_{\mu \in F}$ are mutually orthogonal, the elements $(S_i)_{i=1}^\infty$ defined by  $S_i = \sum_{v \in V_2} s_{\lambda^v_i}$ are partial isometries with mutually orthogonal range projections, and common initial projection $\sum_{v \in V_2} p_v$, which is the identity element of $A = \mathrm{C}^*(\{s_e, p_v : v \in V_2, e \in s^{-1}(V_2)\})$.
Hence $\theta_{i,j} = S_i S^*_j$ defines a family of matrix units generating a copy of the compact operators.
Direct calculation shows that there is a homomorphism $\pi : A \otimes \mathbb{K} \to J$ such that $\pi(a \otimes \theta_{i,j}) =  S_i a S^*_j$ for all $a \in A$ and $i,j \in \mathbb{N}$, so its image $\pi(A \otimes \mathbb{K})$ is stable, and it suffices to show that $\pi(A \otimes \mathbb{K}) = J$. By \cite[Lemma~4.3]{BPRS}, $J = \overline{\operatorname{span}}\,\{s_\mu s^*_\nu : r(\mu) = r(\nu) \in V_2\}$, so it suffices to show that if $r(\mu) \in V_2$, then $s_\mu \in \pi(A \otimes \mathbb{K})$.
So fix $\mu \in E^*$ with $r(\mu) \in V_2$. If $s(\mu)\in V_2$, then $s_\mu=\pi(s_\mu\otimes\theta_{1,1})$ as $\lambda_1^v=v$ for all $v\in V_2$. Otherwise we can factorise $\mu=\mu'\mu''$ where $\mu'\in F$ and $s(\mu'')\in V_2$.  
Again $s_{\mu''} = \pi(s_{\mu''} \otimes \theta_{1,1})$, and since $\mu' \in F$, it is equal to $\lambda^{r(\mu')}_i$ for some $i$. Together this gives $s_{\mu'} = S_i p_{r(\mu')} S_1^* = \pi(p_{r(\mu')} \otimes \theta_{i,1})$. Hence $s_\mu = \pi(p_{r(\mu')} \otimes \theta_{i,1})\pi(s_{\mu''} \otimes \theta_{1,1}) \in \pi(A \otimes \mathbb{K})$. This completes the proof that $J$ is stable. Since we showed above that $J$ is stably isomorphic to $\mathrm{C}^*(E_2)$, it now follows that $J \cong \mathrm{C}^*(E_2) \otimes \K$.

To see that $J$ is essential, fix a non-zero ideal $I\triangleleft \mathrm{C}^*(E)$. Note that Conditions (2)~and~(5) ensure that $E$ has no sinks, and Conditions (3)~and~(5) ensure that every cycle in $E$ has an exit.  Accordingly the Cuntz--Krieger uniqueness theorem (\cite[Theorem 3.7]{KPR98}) ensures that $I$ contains $p_v$ for some $v\in E^0$ (otherwise the quotient map $\mathrm{C}^*(E)\to \mathrm{C}^*(E)/I$ would be an isomorphism). Since $\{w : p_w \in I\} $ is hereditary (\cite[Lemma 4.2]{BPRS}), Conditions~(3) and (5) ensure that there exists $w \in V_2$ with $p_w \in I$ and hence in $I \cap J$.  

Now suppose there is an essential extension $0 \to J \to \mathrm{C}^*(E) \overset{\pi}{\to} A \to 0$ with $J$ a stable Kirchberg algebra and $A$ stably isomorphic to $C(X)$ for some compact Hausdorff space $X$. Let $W = \{v \in E^0 : p_v \in J\}$, and let $V_0 = E^0 \setminus W$; then $W$ is saturated and hereditary by \cite[Lemma 4.2]{BPRS}. Let $V_1$ be the set of vertices in $W$ that lie on no cycles in $E_W$, and let $V_2 = W \setminus V_1$. This $V_1$ satisfies Condition~(2) by definition. Since $W$ is hereditary and is disjoint from $V_0$ there is no path $\mu \in E^*$ with $r(\mu) \in V_0$ and $s(\mu) \in W$, which gives Condition~(4) for $j = 0$ and $i \ge 1$. The ideal $I_W = \overline{\operatorname{span}} \, \mathrm{C}^*(E)\{p_w : w \in W\} \mathrm{C}^*(E)$ is contained in $J$. We claim that $W \not= \emptyset$. Suppose for contradiction that $W = \emptyset$. Then $J$ is an ideal of $\mathrm{C}^*(E)$ containing no vertex projections, and so in the notation of \cite[Theorem~5.1]{CS2017},\footnote{Warning: the edge-direction convention in \cite{CS2017} is opposite to that in this paper.} we have $J = J_{\emptyset, U}$ for some function assigning a proper open subset $U(C)$ of $\mathbb{T}$ to each cycle $C$ with no exit in $E^0 \setminus \emptyset = E^0$. Let $K$ be the set of vertices that lie on a cycle with no exit in $E$. Then \cite[Theorem~5.1]{CS2017} says that $J$ is generated as an ideal by elements of $I_K$, and hence $J \subseteq I_K$. Lemma~\ref{lem:graph circle algebras} implies that $I_K$ is stably finite, and this contradicts that $J$ is purely infinite. So $W$ is nonempty as claimed, and hence $I_W$ is a non-zero ideal of $J$. Since $J$ is simple it follows that $I_W = J$. By \cite[Theorem~4.1(c)]{BPRS}, the $\mathrm{C}^*$-algebra $\mathrm{C}^*(E_W)$ of the subgraph $E_W$ over $W$ is Morita equivalent to $J$ and hence a Kirchberg algebra. In particular, $\mathrm{C}^*(E_W)$ is not AF, so \cite[Theorem~2.4]{KPR98} shows that $V_2 \not= \emptyset$.

We claim that for every $v \in V_2$ and every $w \in W$ there exists $\mu \in E^*_W$ such that $r(\mu) = v$ and $s(\mu) = w$. To see this, fix $v \in V_2$. Let $K = W \setminus \{s(\nu) : \nu \in E^*_W \text{ and }r(\nu) = v\}$. We claim that $K$ is saturated and hereditary. To see that it is hereditary, suppose that $e\in E^1$ has $s(e) \in K$. Then for every $\nu \in E^*$ with $s(\nu) = r(e)$, as $W$ is hereditary, $r(e)\in W$, and $\nu\in E^*_W$. Then $r(\nu) = r(e\nu) \neq v$, by definition of $K$, and so $r(e) \in K$. To see that $K$ is saturated, suppose that $w\in E^0$ has $s^{-1}(w)\neq\emptyset$ and $r(e) \in K$ for all $e \in E^1$ with $s(e) = w$. Then $w\in W$ as $W$ is saturated. Now fix $\nu \in E^*$ with $s(\nu) = w$. Then $\nu\in E_W^*$ (as $W$ is hereditary) and $\nu = e \nu'$ for some $e \in E^1$ with $s(e) = w$. Hence $r(e) \in K$ (as $K$ is hereditary) and therefore $r(\nu) = r(\nu') \neq v$. So $w \in K$. That is $K$ is saturated and hereditary as claimed. By definition, $v \not\in K$, so $K \not= W$. Since $\mathrm{C}^*(E_W)$ is simple, \cite[Theorem~4.1(a)]{BPRS} implies that the only saturated hereditary subsets of $E_W^0$ are $\emptyset$ and $W$, so $K = \emptyset$, proving the claim.

The claim of the preceding paragraph applied to each pair $v, w \in V_2$ shows that $E_2$ is strongly connected. By \cite[Proposition~5.1]{BPRS} for $\mathrm{C}^*(E_W)$ (which applies as the previous paragraph shows $E_W$ has no sinks) every cycle in $E_W$ has an exit, so $E_2$ is not a simple cycle. So $V_2$ satisfies Condition~(3). The claim also shows that for each $v \in V_1$ there is a path $\mu \in E^*$ such that $s(\mu) = v$ and $r(\mu) \in V_2$, which verifies Condition~(5) for $v \in V_1$. Moreover, if $w \in W$ and there exists $\nu \in E^*$ with $s(\nu) \in V_2$ and $r(\nu) = w$, then $\nu \in E_W^*$ because $W$ is hereditary. So the claim applied with $v = s(\nu)$ gives a cycle $\mu\nu$ with source $w$, and hence $w \in V_2$. Hence, if $w \in V_1$, then there is no path in $E$ to $w$ from $V_2$. This verifies the remaining case of Condition~(4) (for $i = 2$ and $j = 1$).

Since $\mathrm{C}^*(E)$ is unital and $J$ is stable, $J \not= \mathrm{C}^*(E)$, so $V_0 \not= \emptyset$. By \cite[Theorem~4.1(b)]{BPRS}, we have $A = \mathrm{C}^*(E)/J \cong \mathrm{C}^*(E_0)$. Hence $\mathrm{C}^*(E_0)$ is unital and stably isomorphic to $C(X)$, so stably finite. Thus no cycle in $E_0$ has an exit. Let $V_0'$ be the set of vertices in $V_0$ that lie on a cycle; so $V_0'$ is a finite union $\bigsqcup_{C \in \mathcal{C}} C$ of subsets such that each $E_C$ is a simple cycle. To verify Condition~(1) we must show that $V_0$ is the saturation in $E_0$ of $V'_0$. To see this, fix $v \in V_0$, let $n = |V_0|+1$, and suppose that $\mu \in E_0^n$ satisfies $s(\mu) = v$. By the pigeonhole principle $\mu$ must contain a cycle as a subpath. But since no cycle in $E_0$ has an exit in $E_0$, we have $r(\mu)\in V_0'$, and hence $v$ belongs to the saturation in $E_0$ of $V'_0$. So $V_0$ satisfies Condition~(1).%\marginpar{Rewritten to avoid introducing a load of notation.}

It remains to check Condition~(5) for $v \in V_0$. Suppose for contradiction that $v \in V_0$, and there is no $\mu \in E^*$ with $s(\mu) = v$ and $r(\mu) \in V_2$. Since $V_0$ is the saturation of $\bigsqcup_{C \in \mathcal{C}} C$, there is a path $\nu$ in $E_0^*$ and an element $C \in \mathcal{C}$ with $s(\nu) = v$ and $r(\nu) \in C$. Since $E_C$ is strongly connected and there is no path from $v$ to $V_2$, there is no path from $C$ to $V_2$. We already know from Condition~(4) that there are no paths from $V_2$ to $C$, so if $\mu,\nu \in E^*$ satisfy $r(\mu) \in C$ and $r(\nu) \in V_2$, then $\mu \not= \nu\mu'$ for any $\mu'$ and $\nu \not= \mu\mu'$ for any $\mu'$. Hence $s_\mu s^*_\mu$ and $s_\nu s^*_\nu$ are orthogonal. Since $C$ is hereditary, \cite[Lemma~4.3]{BPRS} implies that the ideal $I_C$ is $I_C = \overline{\operatorname{span}}\{s_\mu s^*_{\mu'} : r(\mu) = r(\mu') \in C\}$ and $J =I_W= \overline{\operatorname{span}}\{s_\nu s^*_{\nu'} : r(\nu) = r(\nu') \in V_2\}$. Hence $I_C \perp J$, which contradicts that $J$ is essential. This completes the verification of Condition~(5).
\end{proof}

\begin{remark}
Every graph $\mathrm{C}^*$-algebra is a direct limit of $\mathrm{C}^*$-algebras of finite graphs \cite{JP02}, and the $\mathrm{C}^*$-algebras of finite graphs have finite composition series with ideals and subquotients that are either finite-dimensional, matrix algebras over $C(\mathbb{T})$ or Kirchberg algebras. All these building blocks have nuclear dimension at most~1. In addition, among graph $\mathrm{C}^*$-algebras, extensions of AF quotients by purely infinite ideals \cite{RST15}, of purely-infinite quotients by AF ideals \cite{Gla20}, and of  commutative quotients by finite-dimensional ideals \cite{BW19,GT22} have nuclear dimension equal to the maximum of that of the quotient and that of the ideal. Corollary~\ref{cor:stable quotient} adds extensions of stably commutative quotients by stable Kirchberg ideals to this list. 

Suppose now that $E$ is a finite graph with a single saturated hereditary subgraph $E_1$ and complementary subgraph $E_0$ (so $\mathrm{C}^*(E)$ has a single nontrivial gauge-invariant ideal). Routine arguments show that if $E_1$ has a cycle with no exit and $E_0$ has no cycles, then $\mathrm{C}^*(E)$ has the form $M_m(C(\mathbb{T})) \oplus M_n$, and that if both $E_0$ and $E_1$ have no cycles then $\mathrm{C}^*(E)$ has the form $M_m \oplus M_n$. So we obtain the following table of values for the nuclear dimension of $\mathrm{C}^*(E)$ (a question mark indicates that the value is unknown), depending on whether each of the two components contains a cycle with an exit, contains a cycle without an exit, or contains no cycles.
\begin{center}
\begin{tabu}{|[1.5pt]l|l|[1.5pt]c|c|c|[1.5pt]}
\tabucline[1.5pt]{-}
\multicolumn{2}{|[1.5pt]c|[1.5pt]}{\multirow{2}{*}{\raisebox{0pt}[21pt][6pt]{}$\dimnuc(\mathrm{C}^*(E))$}}& 
\multicolumn{3}{c|[1.5pt]}{$E_1$}\\ \cline{3-5}
\multicolumn{1}{|[1.5pt]c}{ }&\multicolumn{1}{c|[1.5pt]}{ }&cycle with& cycle with-& no cycle\\
\multicolumn{1}{|[1.5pt]c}{ }&\multicolumn{1}{c|[1.5pt]}{ }&exit&out exit& \\
\tabucline[1.5pt]{-}
\multirow{3}{*}{$E_0$} &\raisebox{0pt}[12pt][6pt]{}cycle with exit & \textbf{1} \cite{FS23} & \textbf{?} &  \textbf{1} \cite{FS23} \\[-1pt]\tabucline{2-5}
  &\raisebox{0pt}[12pt][6pt]{}cycle without exit & \textbf{1} (Cor~\ref{cor:stable quotient}) & \textbf{?} & \textbf{1} \cite{GT22} \\[-1pt]\tabucline{2-5}
  &\raisebox{0pt}[12pt][6pt]{}no cycle &  \textbf{1} \cite{RSS15} & \textbf{1} \cite{WZ10} & \textbf{0} \cite{WZ10} \\\tabucline[1.5pt]{-}
\end{tabu}
\end{center}

Minimal examples of the two unknown cases are illiustrated below.
\[
\begin{tikzpicture}
\node[inner sep=1pt, fill=black, circle] (v) at (0,0) {};
\node[inner sep=1pt, fill=black, circle] (w) at (2,0) {};
\draw[-stealth] (w)--(v);
\draw[-stealth] (w) .. controls +(1, 1) and +(-1, 1) .. (w);
\draw[-stealth] (v) .. controls +(1, 1) and +(-1, 1) .. (v);
%
%\node at (3.5, 0) {and};
%
\node[inner sep=1pt, fill=black, circle] (x) at (5,0) {};
\node[inner sep=1pt, fill=black, circle] (y) at (7,0) {};
\draw[-stealth] (y)--(x);
\draw[-stealth] (x) .. controls +(1, 1) and +(-1, 1) .. (x);
\draw[-stealth] (x) .. controls +(1.5, 1.5) and +(-1.5, 1.5) .. (x);
\draw[-stealth] (y) .. controls +(1, 1) and +(-1, 1) .. (y);
%\node at (7.4, -.1) {.};
\end{tikzpicture}
\]
We suspect that computations of nuclear dimension of $\mathrm{C}^*(E)$ for each of these two graphs $E$ would be a significant step towards breaking the back of Question~\ref{qn:dimnuc(graphalg)}.
\end{remark}

\paragraph*{\textbf{Acknowledgements}.  } The authors thank the anonymous referees for their careful reading of the paper and helpful comments.


\begin{thebibliography}{10}

\bibitem{AkPed77}
C.~Akemann and G.~Pedersen.
\newblock Ideal perturbations of elements in {{$\mathrm{C}^*$}}-algebras.
\newblock {\em Math. Scand.}, 41(1):117--139, 1977.

\bibitem{Alexandroff27}
P.~Alexandroff.
\newblock \"{U}ber stetige {A}bbildungen kompakter {R}\"{a}ume.
\newblock {\em Math. Ann.}, 96(1):555--571, 1927.

\bibitem{aHR97}
A.~an~Huef and I.~Raeburn.
\newblock The ideal structure of {C}untz-{K}rieger algebras.
\newblock {\em Ergodic Theory Dynam. Systems}, 17(3):611--624, 1997.

\bibitem{Arveson}
W.~Arveson.
\newblock Notes on extensions of {$\mathrm{C}^*$}-algebras.
\newblock {\em Duke Math. J.}, 44(2):329--355, 1977.

\bibitem{BPRS}
T.~Bates, D.~Pask, I.~Raeburn, and W.~Szyma\'{n}ski.
\newblock The {{$\mathrm{C}^*$}}-algebras of row-finite graphs.
\newblock {\em New York J. Math.}, 6:307--324, 2000.

\bibitem{Bla86}
B.~Blackadar.
\newblock {\em {$K$}-theory for operator algebras}, volume~5 of {\em
  Mathematical Sciences Research Institute Publications}.
\newblock Springer-Verlag, New York, 1986.

\bibitem{Bl06}
B.~Blackadar.
\newblock {\em Operator algebras: Theory of {$\mathrm{C}^*$}-algebras and von
  {N}eumann algebras}, Encyclopedia of Mathematical Sciences, volume 122.
\newblock Springer, 2006.

\bibitem{BBSTWW}
J.~Bosa, N.~P. Brown, Y.~Sato, A.~Tikuisis, S.~White, and W.~Winter.
\newblock Covering dimension of {$\mathrm{C}^*$}-algebras and 2-coloured
  classification.
\newblock {\em Mem. Amer. Math. Soc.}, 257(1233):vii+97, 2019.

\bibitem{BGSW19}
J.~Bosa, J.~Gabe, A.~Sims, and S.~White.
\newblock The nuclear dimension of {$\mathcal{O}_\infty$}-stable
  {$\mathrm{C}^*$}-algebras.
\newblock {\em Adv. Math.}, 401:Paper No. 108250, 51 pages, 2022.

\bibitem{BW19}
L.~Brake and W.~Winter.
\newblock The {T}oeplitz algebra has nuclear dimension one.
\newblock {\em Bull. Lond. Math. Soc.}, 51(3):554--562, 2019.

\bibitem{Br77}
L.~G. Brown.
\newblock Stable isomorphism of hereditary subalgebras of
  {$\mathrm{C}^*$}-algebras.
\newblock {\em Pacific J. Math.}, 71(2):335--348, 1977.

\bibitem{Brown81}
L.~G. Brown.
\newblock Extensions of {AF} algebras: The projection lifting problem.
\newblock In {\em Operator Algebras and Applications, Part 1}, volume~38 of
  {\em Proceedings of Symposia in Pure Mathematics}. American Mathematical
  Society, 1982.


\bibitem{Bus68}
R.~C. Busby.
\newblock Double centralizers and extensions of {$\mathrm{C}^*$}-algebras.
\newblock {\em Trans. Amer. Math. Soc.}, 132:79--99, 1968.

\bibitem{CS2017}
T.~M. Carlsen and A.~Sims.
\newblock On {H}ong and {S}zyma\'{n}ski's description of the primitive-ideal
  space of a graph algebra.
\newblock In {\em Operator algebras and applications---the {A}bel {S}ymposium
  2015}, volume~12 of {\em Abel Symp.}, pages 115--132. Springer, [Cham], 2017.

\bibitem{Cas21}
J.~Castillejos.
\newblock {$\mathrm{C}^*$}-algebras and their nuclear dimension.
\newblock In {\em Mexican mathematicians in the world---trends and recent
  contributions}, volume 775 of {\em Contemp. Math.}, pages 41--63. Amer. Math.
  Soc., 2021

\bibitem{CE}
J.~Castillejos and S.~Evington.
\newblock Nuclear dimension of simple stably projectionless {${\rm
  C}^*$}-algebras.
\newblock {\em Anal. PDE}, 13(7):2205--2240, 2020.

\bibitem{CETWW}
J.~Castillejos, S.~Evington, A.~Tikuisis, S.~White, and W.~Winter.
\newblock Nuclear dimension of simple {$\mathrm{C}^*$}-algebras.
\newblock {\em Invent. Math.}, 224:245--290, 2021.

\bibitem{ChoiEff76}
M.-D. Choi and E.~G. Effros.
\newblock The completely positive lifting problem for {$\mathrm{C}^*$}-algebras.
\newblock {\em Ann. of Math. (2)}, 104(3):585--609, 1976.

\bibitem{CE76}
M.-D. Choi and E.~G. Effros.
\newblock Separable nuclear {$\mathrm{C}^*$}-algebras and injectivity.
\newblock {\em Duke Math. J.}, 43(2):309--322, 1976.

\bibitem{Cu77}
J.~Cuntz.
\newblock Simple {$\mathrm{C}^*$}-algebras generated by isometries.
\newblock {\em Comm. Math. Phys.}, 57(2):173--185, 1977.

\bibitem{Gla20}
P.~Easo, E.~Garijo, S.~Kaubrys, D.~Nkansah, M.~Vrabec, D.~Watt, C.~Wilson,
  C.~B\"{o}nicke, S.~Evington, M.~Forough, S.~Gir\'{o}n~Pacheco, N.~Seaton,
  S.~White, M.~F. Whittaker, and J.~Zacharias.
\newblock The {C}untz--{T}oeplitz algebras have nuclear dimension one.
\newblock {\em J. Funct. Anal.}, 279(7):108690, 2020.

\bibitem{ERRS21}
S.~Eilers, G.~Restorff, E.~Ruiz, and A.~P.~W. S{\o}rensen.
\newblock {The complete classification of unital graph ${\mathrm{C}^*}$-algebras:
  Geometric and strong}.
\newblock {\em Duke Math. J.}, 170(11):2421 -- 2517, 2021.

\bibitem{ET09}
S.~Eilers and M.~Tomforde.
\newblock On the classification of nonsimple graph {$\mathrm{C}^*$}-algebras.
\newblock {\em Math. Ann.}, 346(2):393--418, 2010.

\bibitem{E76}
G.~A. Elliott, 
\newblock Automorphisms determined by multipliers on ideals of a {$\mathrm{C}^*$}-algebra.
\newblock {\em J. Funct. Anal.}, 23:1--10, 1976.

\bibitem{EGLN}
G.~A. Elliott, G.~Gong, H.~Lin, and Z.~Niu.
\newblock On the classification of simple amenable {$\mathrm{C}^*$}-algebras with
  finite decomposition rank {II}.
\newblock {\em J. Noncommut. Geom.}, 19(1):73--104, 2024.

\bibitem{El01}
G.~A. Elliott and D.~Kucerovsky.
\newblock An abstract {V}oiculescu--{B}rown--{D}ouglas--{F}illmore absorption
  theorem.
\newblock {\em Pacific J. Math.}, 198(2):385--409, 2001.



\bibitem{En15}
D.~Enders.
\newblock On the nuclear dimension of certain {UCT}-{K}irchberg algebras.
\newblock {\em J. Funct. Anal.}, 268(9):2695--2706, 2015.

\bibitem{Evans82}
D.~E. Evans.
\newblock Gauge actions on {$\mathcal{O}_A$}.
\newblock {\em J. Operator Theory}, 7(1):79--100, 1982.

\bibitem{Ev22}
S.~Evington.
\newblock Nuclear dimension of extensions of {$\mathcal{O}_\infty$}-stable
  algebras.
\newblock {\em J. Operator Theory}, 88(1):171--187, 2022.

\bibitem{FS23}
G.~Faurot and C.~Schafhauser.
\newblock Nuclear dimension of graph {$\mathrm{C}^*$}-algebras with condition
  ({K}).
\newblock {\em Proc. Amer. Math. Soc.}, to appear. arXiv:2310.15073, 2023.

\bibitem{Gabe16}
J.~Gabe.
\newblock A note on nonunital absorbing extensions.
\newblock {\em Pacific J. Math.}, 284(2):383--393, 2016.

\bibitem{Gabe20}
J.~Gabe.
\newblock A new proof of {K}irchberg's {$\mathcal{O}_2$}-stable classification.
\newblock {\em J. Reine Angew. Math.}, 761:247--289, 2020.

\bibitem{GabeOinf}
J.~Gabe.
\newblock Classification of {$\mathcal{O}_\infty$}-stable
  {$\mathrm{C}^*$}-algebras.
\newblock {\em Mem. Amer. Math. Soc.}, 293(1461):v+115, 2024.

\bibitem{GT22}
R.~Gardner and A.~Tikuisis.
\newblock The nuclear dimension of extensions of commutative
  {$\mathrm{C}^*$}-algebras by the compact operators.
\newblock {\em J. Operator Theory}, 92(2):331--348, 2024.

\bibitem{GLN20a}
G.~Gong, H.~Lin, and Z.~Niu.
\newblock A classification of finite simple amenable {$\mathcal{Z}$}-stable
  {$\mathrm{C}^*$}-algebras, {I}: {$\mathrm{C}^*$}-algebras with generalized tracial rank one.
\newblock {\em C. R. Math. Acad. Sci. Soc. R. Can.}, 42(3):63--450, 2020.

\bibitem{GLN20b}
G.~Gong, H.~Lin, and Z.~Niu.
\newblock A classification of finite simple amenable {$\mathcal{Z}$}-stable
  {$\mathrm{C}^*$}-algebras, {II}: {$\mathrm{C}^*$}-algebras with rational generalized tracial
  rank one.
\newblock {\em C. R. Math. Acad. Sci. Soc. R. Can.}, 42(4):451--539, 2020.

\bibitem{Hausdorff27}
F.~Hausdorff.
\newblock {\em Mengenlehre}, volume~7.
\newblock Gruyter, 1927.

\bibitem{HS02}
J.~H. Hong and W.~Szyma{\'n}ski.
\newblock {The primitive ideal space of the $\mathrm{C}^{*}$-algebras of infinite
  graphs}.
\newblock {\em J. Math. Soc. Japan}, 56(1):45 -- 64,
  2004.

\bibitem{JP02}
J.~A. Jeong and G.~H. Park.
\newblock Graph $\mathrm{C}^*$-algebras with real rank zero.
\newblock {\em J. Funct. Anal.}, 188(1):216--226, 2002.

\bibitem{Kir77}
E.~Kirchberg.
\newblock {$\mathrm{C}^*$}-nuclearity implies {CPAP}.
\newblock {\em Math. Nachr.}, 76:203--212, 1977.

\bibitem{Ki95}
E.~Kirchberg.
\newblock Exact {$\mathrm{C}^*$}-algebras, tensor products, and the classification
  of purely infinite algebras.
\newblock In {\em Proceedings of the {I}nternational {C}ongress of
  {M}athematicians, {V}ol.\ 1, 2 ({Z}\"urich, 1994)}, pages 943--954.
  Birkh\"auser, Basel, 1995.

\bibitem{Kir06}
E.~Kirchberg.
\newblock Central sequences in {$\mathrm{C}^*$}-algebras and strongly purely
  infinite algebras.
\newblock In {\em Operator {A}lgebras: {T}he {A}bel {S}ymposium 2004}, volume~1
  of {\em Abel Symp.}, pages 175--231. Springer, Berlin, 2006.

\bibitem{0infinitystability}
E.~Kirchberg and N.~C. Phillips.
\newblock Embedding of exact {$\mathrm{C}^*$}-algebras in the {C}untz algebra
  $\mathcal{O}_2$.
\newblock {\em J. Reine Angew. Math.}, 525:17--53, 2000.

\bibitem{KW04}
E.~Kirchberg and W.~Winter.
\newblock Covering dimension and quasidiagonality.
\newblock {\em Internat. J. Math.}, 15(1):63--85, 2004.

\bibitem{KPR98}
A.~Kumjian, D.~Pask, and I.~Raeburn.
\newblock Cuntz-{K}rieger algebras of directed graphs.
\newblock {\em Pacific J. Math.}, 184(1):161--174, 1998.

\bibitem{KPRR97}
A.~Kumjian, D.~Pask, I.~Raeburn, and J.~Renault.
\newblock Graphs, groupoids, and {C}untz-{K}rieger algebras.
\newblock {\em J. Funct. Anal.}, 144(2):505--541, 1997.

\bibitem{Lin17}
H.~Lin.
\newblock Homomorphisms from {AH}-algebras.
\newblock {\em J. Topol. Anal.}, 9(1):67--125, 2017.

\bibitem{Lo93}
T.~A. Loring.
\newblock {$\mathrm{C}^*$}-algebras generated by stable relations.
\newblock {\em J. Funct. Anal.}, 112(1):159--203, 1993.

\bibitem{MS14}
H.~Matui and Y.~Sato.
\newblock Decomposition rank of {UHF}-absorbing {$\mathrm{C}^*$}-algebras.
\newblock {\em Duke Math. J.}, 163(14):2687--2708, 2014.

\bibitem{Ng06}
P.~W. Ng.
\newblock The corona factorization property.
\newblock In {\em Operator theory, operator algebras, and applications}, volume
  414 of {\em Contemp. Math.}, pages 97--110. Amer. Math. Soc., Providence, RI,
  2006.

\bibitem{NR16}
P.~W. Ng and L.~Robert.
\newblock Sums of commutators in pure {$\mathrm{C}^*$}-algebras.
\newblock {\em M{\"u}nster J. Math.}, 9(1):121--154, 2016.

\bibitem{Os71}
P.~A. Ostrand.
\newblock Covering dimension in general spaces.
\newblock {\em General Topology and Appl.}, 1(3):209--221, 1971.

\bibitem{Pears75}
A.~R. Pears.
\newblock {\em Dimension theory of general spaces}.
\newblock Cambridge University Press, Cambridge, England-New York-Melbourne,
  1975.

\bibitem{Ro11}
L.~Robert.
\newblock Nuclear dimension and {$n$}-comparison.
\newblock {\em M{\"u}nster J. Math.}, 4:65--71, 2011.

\bibitem{Ro91}
M.~R{\o}rdam.
\newblock Ideals in the multiplier algebra of a stable {$\mathrm{C}^*$}-algebra.
\newblock {\em J. Operator Theory}, 25(2):283--298, 1991.

\bibitem{Rordam2002}
M.~R{\o}rdam.
\newblock Classification of nuclear, simple {$\mathrm{C}^*$}-algebras.
\newblock In {\em Classification of nuclear {$\mathrm{C}^*$}-algebras. {E}ntropy in
  operator algebras},  Encyclopedia of Mathematical Sciences, volume 126,  pages 1--145. Springer, 2002.

\bibitem{RSS15}
E.~Ruiz, A.~Sims, and A.~P.~W. S{\o}rensen.
\newblock U{CT}-{K}irchberg algebras have nuclear dimension one.
\newblock {\em Adv. Math.}, 279:1--28, 2015.

\bibitem{RST15}
E.~Ruiz, A.~Sims, and M.~Tomforde.
\newblock The nuclear dimension of graph {$\mathrm{C}^*$}-algebras.
\newblock {\em Adv. Math.}, 272:96--123, 2015.

\bibitem{SWW15}
Y.~Sato, S.~White, and W.~Winter.
\newblock Nuclear dimension and {$\mathcal{Z}$}-stability.
\newblock {\em Invent. Math.}, 202(2):893--921, 2015.

\bibitem{Schaf20}
C.~Schafhauser.
\newblock Subalgebras of simple {AF}-algebras.
\newblock {\em Ann. of Math. (2)}, 192(2):309--352, 2020.

\bibitem{TWW17}
A.~Tikuisis, S.~White, and W.~Winter.
\newblock Quasidiagonality of nuclear {$\mathrm{C}^*$}-algebras.
\newblock {\em Ann. of Math. (2)}, 185(1):229--284, 2017.

\bibitem{TW}
A.~Tikuisis and W.~Winter.
\newblock Decomposition rank of $\mathcal{Z}$-stable {$\mathrm{C}^*$}-algebras.
\newblock {\em Anal. PDE}, 7(3):673--700, 2014.

\bibitem{TW07}
A.~S. Toms and W.~Winter.
\newblock Strongly self-absorbing {$\mathrm{C}^*$}-algebras.
\newblock {\em Trans. Amer. Math. Soc.}, 359(8):3999--4029, 2007.

\bibitem{Vo91}
D.~Voiculescu.
\newblock {A note on quasi-diagonal {$\mathrm{C}^*$}-algebras and homotopy}.
\newblock {\em Duke Math. J.}, 62(2):267 -- 271, 1991.

\bibitem{Wi09}
W.~Winter.
\newblock Covering dimension for nuclear {$\mathrm{C}^*$}-algebras. {II}.
\newblock {\em Trans. Amer. Math. Soc.}, 361(8):4143--4167, 2009.

\bibitem{Wi12}
W.~Winter.
\newblock Nuclear dimension and {$\mathcal{Z}$}-stability of pure
  {$\mathrm{C}^*$}-algebras.
\newblock {\em Invent. Math.}, 187(2):259--342, 2012.

\bibitem{Winter:AJM}
W.~Winter.
\newblock Classifying crossed product {$\mathrm{C}^*$}-algebras.
\newblock {\em Amer. J. Math.}, 138(3):793--820, 2016.

\bibitem{WZ09}
W.~Winter and J.~Zacharias.
\newblock Completely positive maps of order zero.
\newblock {\em M{\"u}nster J. Math.}, 2:311--324, 2009.

\bibitem{WZ10}
W.~Winter and J.~Zacharias.
\newblock The nuclear dimension of {$\mathrm{C}^*$}-algebras.
\newblock {\em Adv. Math.}, 224(2):461--498, 2010.

\bibitem{Zh90}
S.~Zhang.
\newblock A property of purely infinite simple {$\mathrm{C}^*$}-algebras.
\newblock {\em Proc. Amer. Math. Soc.}, 109(3):717--720, 1990.

\end{thebibliography}
\end{document}